\newcommand*\mathinhead[2]{\texorpdfstring{$#1$}{#2}}
\newcommand*{\dd}{\mathop{}\!\mathrm{d}}
\newcommand*{\ox}{\overline{x}}
\newcommand*{\oy}{\overline{y}}
\newcommand*{\oz}{\overline{z}}
\newcommand*{\oxi}{\overline{\xi}}
\numberwithin{equation}{section}
\newtheoremstyle{note}
    {8pt}
    {6pt}
    {\itshape}
    {11.75pt}
    {\bfseries}
    {.}
    {.5em}
    {}
\theoremstyle{note}
\newtheorem*{thm*}{Theorem}
\newtheorem{thm}{Theorem}[section]
\newtheorem{lem}[thm]{Lemma}
\newtheorem*{prop*}{Proposition}
\newtheoremstyle{note2}
    {8pt}
    {1pt}
    {}
    {11.75pt}
    {\bfseries}
    {.}
    {.5em}
    {}
\theoremstyle{note2}
\newtheorem{defn}{Definition}[section]
\newtheoremstyle{note3}
    {8pt}
    {1pt}
    {}
    {11.75pt}
    {\itshape}
    {.}
    {.5em}
    {}
\theoremstyle{note3}
\newtheorem{rem}{Remark}[section]
\DeclareMathAlphabet{\pazocal}{OMS}{zplm}{m}{n}
\begin{document}
	\title[Removable singularities for Lipschitz fractional caloric functions]{Removable singularities for Lipschitz fractional caloric functions in time varying domains}
	\author{Joan Hernández}\thanks{ORCID: 0000-0002-2207-5981. The author has been supported by PID2020-114167GB-I00 (Mineco, Spain).}
	
	\begin{abstract}
		In this paper we study removable singularities for regular $(1,\frac{1}{2s})$-Lipschitz solutions of the $s$-fractional heat equation for $1/2<s<1$. To do so, we define a Lipschitz fractional caloric capacity and study its  critical dimension and the $L^2$-boundedness of a pair of singular integral operators, whose kernels will be the gradient of the fundamental solution of the fractional heat equation and its conjugate.
		\bigskip
		
		\noindent\textbf{AMS 2020 Mathematics Subject Classification:}  42B20 (primary); 28A12 (secondary).
		
		\medskip
		
		\noindent \textbf{Keywords:} Removable singularities, fractional heat equation, singular integrals, $L^2$-boundedness.
	\end{abstract}

    	\maketitle

	\section{Introduction}
	\label{sec1}
    \noindent
	In the article \cite{MPrT}, Mateu, Prat and Tolsa introduce the notion of Lipschitz caloric removability. To define it, objects from parabolic theory in time varying domains are required, such as the parabolic distance, the parabolic BMO space, or Lipschitz parabolic functions (see \cite{Ho}, \cite{HoL}, \cite{NyS}). Inspired by \cite{MPr}, we extend these ideas to the fractional caloric setting, giving rise to Lipschitz $s$-parabolic functions, as well as a characterization of Lipschitz $s$-caloric removability using a certain capacity. Intuitively, removable sets can be thought of as those that do not affect the search for solutions of the $s$-heat equation with the previous regularity assumptions. That is, any $s$-parabolic Lipschitz function solving the $s$-heat equation outside the set actually verifies the same equation throughout the entire domain, including the set itself.
    
    We introduce basic notation and terminology. The ambient space will be $\mathbb{R}^{n+1}$ with a generic point denoted by $\ox:=(x,t)\in\mathbb{R}^n\times\mathbb{R}$. For each  $s\in(0,1]$, the $s$-heat operator is defined as
    \begin{equation*}
        \Theta^s:=(-\Delta)^s+\partial_t,
    \end{equation*}
    and we name its associated equation the $\Theta^s$-equation. If $s<1$, $(-\Delta)^s:=(-\Delta_x)^{s}$ is the pseudo-differential operator known as the \textit{$s$-Laplacian} with respect to spatial variables. It can be defined through its Fourier transform,
	\begin{equation*}
		\widehat{(-\Delta_x)^{s}f}(\xi,t)=|\xi|^{2s}\widehat{f}(\xi,t),
	\end{equation*}
	or by its integral representation
	\begin{align*}
		(-\Delta_x)^{s} f(x,t)& = c_{n,s}\text{p.v.}\int_{\mathbb{R}^n}\frac{f(x,t)-f(y,t)}{|x-y|^{n+2s}}\dd y  \\
		& = c_{n,s}' \int_{\mathbb{R}^n}\frac{f(x+y,t)-2f(x,t)+f(x-y,t)}{|y|^{n+2s}}\dd y.
	\end{align*}
	The reader may find more details about the properties of such non-local operator in \cite[\textsection{3}]{DPV} or \cite{St}.
    
    The fundamental solution $P_s(x,t)$ of the $\Theta^s$-equation is given by the inverse spatial Fourier transform of $e^{-4\pi^2 t|\xi|^{2s}}$ for $t>0$, and it equals $0$ if $t\leq 0$. If $s=1$, the function $P_1$ is nothing but the classical \textit{heat kernel},
\begin{equation*}
    W(\ox):=P_1(\ox)=c_nt^{-n/2}\phi_{n,1}(|x|t^{-1/2}) \chi_{t>0},
\end{equation*}
with $\phi_{n,1}(\rho):=e^{-\rho^2/4}$. Blumenthal and Getoor \cite[Theorem 2.1]{BG} proved that for $s<1$,
\begin{equation*}
    P_s(\ox)=c_{n,s}''\,t^{-\frac{n}{2s}}\phi_{n,s}\big( |x|t^{-\frac{1}{2s}} \big)\chi_{t>0}.
\end{equation*}
Here, $\phi_{n,s}$ is a smooth function, radially decreasing and satisfying,
\begin{equation*}
    \phi_{n,s}(\rho)\approx_{n,s} \big(1+\rho^2\big)^{-(n+2s)/2}, \qquad \text{if \ $0<s<1$},
\end{equation*}
the latter being an equality if $s=1/2$ \cite{Va}. Hence,
\begin{equation*}
    P_s(\ox)\approx_{n,s} \frac{t}{|\ox|_{p_s}^{n+2s}}\chi_{t>0}.
\end{equation*}
It follows from the construction that
\begin{equation*}
     \widehat{\phi_{n,s}\big(|\,\cdot\,|\big)}(\xi)=e^{-4\pi^2 |\xi|^{2s}}.
\end{equation*}

    For each $0<s\leq 1$, we define the $s$\textit{-parabolic distance} between two points $\ox:=(x,t),\oy:=(y,\tau)$ in $\mathbb{R}^{n+1}$ as
\begin{align*}
    |\ox-\oy|_{p_s}=\text{dist}_{p_s}((x,t),(y,\tau))&:=\max\big\{ |x-y|,|t-\tau|^{\frac{1}{2s}} \big\}\\
    &\;\approx_{n,s} (|x-y|^2+|t-\tau|^{1/s})^{1/2}.
\end{align*}
From the latter, the notions of $s$\textit{-parabolic cube} and $s$\textit{-parabolic ball} emerge naturally. We denote $B(\ox,r)$ as the $s$-parabolic ball centered at $\ox=(x,t)$ with radius $r$, and $Q$ as an $s$-parabolic cube of side length $\ell$. Observe that $Q$ is a set of the form
\begin{equation*}
     I_1\times\cdots\times I_n\times I_{n+1},
\end{equation*}
where $I_1,\ldots, I_n$ are intervals of length $\ell$, while $I_{n+1}$ is another interval of length $\ell^{2s}$. Write $\ell(Q)$ to refer to the particular side length of $Q$. Observe that $B(\ox,r)$ can be also decomposed as a cartesian product of the form $B_1\times I$, where $B_1\subset \mathbb{R}^n$ is the Euclidean ball of radius $r$ centered at $x$ and $I\subset \mathbb{R}$ is a real interval of length $(2r)^{2s}$ centered at $t$.

In an $s$-parabolic setting, we say that $\delta_\lambda$ is an $s$\textit{-parabolic dilation} of factor $\lambda>0$ if 
\begin{equation*}
    \delta_\lambda(x,t)=\big(\lambda x, \lambda^{2s} t\big
    ).
\end{equation*}
Since we will always work with the $s$-parabolic distance, for a given $s$-parabolic cube $Q\subset \mathbb{R}^{n+1}$ we will simply write $\lambda Q$ to denote $\delta_{\lambda}(Q)$. That is, $\lambda Q$ will be the $s$-parabolic cube of side length $\lambda \ell(Q)$ concentric with $Q$.

As the reader may suspect, the notion of $s$\textit{-parabolic BMO space}, $\text{BMO}_{p_s}$, refers to the space of functions (strictly, equivalence classes) that present bounded mean oscillation over all $s$-parabolic cubes, instead of the usual Euclidean ones. We will write $\|\cdot\|_{\ast,p_s}$ to denote the $s$-parabolic BMO norm, i.e.
\begin{equation*}
    \|f\|_{\ast,p_s}:=\sup_{Q}\frac{1}{|Q|}\int_Q|f-f_Q|\dd\pazocal{L}^{n+1},
\end{equation*}
the supremum taken among $s$-parabolic cubes in $\mathbb{R}^{n+1}$. Here $\dd\pazocal{L}^{n+1}$ stands for the Lebesgue measure in $\mathbb{R}^{n+1}$ and $f_Q$ is the mean of $f$ in $Q$ with respect to $\dd\pazocal{L}^{n+1}$. We will also need the following fractional time derivative for $s\in(1/2,1]$,
\begin{equation*}
    \partial_t^{\frac{1}{2s}}f(x,t):=\int_\mathbb{R}\frac{f(x,\tau)-f(x,t)}{|\tau-t|^{1+\frac{1}{2s}}}\dd\tau.
\end{equation*}

For $s=1$ (the classical caloric setting) a function $f$ is said to be $(1,1/2)$-Lipschitz regular if it is such that
\begin{equation}
	\label{eq1.1}
	\|\nabla_x f\|_{L^{\infty}(\mathbb{R}^{n+1})}<\infty, \hspace{1cm} \|\partial_t^{1/2} f\|_{\ast, p_1}<\infty.
\end{equation}
As shown by Hofmann and Lewis \cite[Lemma 1]{Ho}, \cite[Thm. 7.4]{HoL}, these functions satisfy
\begin{equation*}
	\|f\|_{\text{Lip}_{1/2},t}:=\sup_{\substack{x\in \mathbb{R}^n\\ t,u\in \mathbb{R}, t\neq u}} \frac{|f(x,t)-f(x,u)|}{|t-u|^{1/2}}\lesssim \|\nabla_x f\|_{L^\infty(\mathbb{R}^{n+1})}+\|\partial_t^{1/2}f\|_{\ast,p_1}.
\end{equation*}
Thus a $(1,1/2)-$Lipschitz function is Lipschitz in the spatial variables and 1/2-Lipschitz in time. In \cite{MPrT} the authors introduce the so-called Lipschitz caloric capacity to characterize removable sets for solutions of the heat equation satisfying the above $(1,1/2)-$-Lipschitz condition

The main goal of this paper is to extend the previous notions and results to the fractional caloric setting. Namely, we will introduce a corresponding capacity $\Gamma_{\Theta^s}$ for $(1,\frac{1}{2s})$-Lipschitz solutions $f$ of the $\Theta^s$-equation, $1/2<s<1$, i.e. those satisfying
\begin{equation*}
	\|\nabla_x f\|_{L^\infty(\mathbb{R}^{n+1})}\lesssim 1\qquad \text{and} \qquad \|f\|_{\text{Lip}_{\frac{1}{2s},t}} :=\sup_{\substack{x\in \mathbb{R}^n\\ t,u\in \mathbb{R}, t\neq u}} \frac{|f(x,t)-f(x,u)|}{|t-u|^{\frac{1}{2s}}} \lesssim 1.
\end{equation*}
We begin our analysis in \textsection \ref{sec2} by noting that the results of Hofmann and Lewis \cite{HoL} can be adapted to the fractional caloric case:
\begin{thm*}
	Let $s\in(1/2,1]$ and $f:\mathbb{R}^{n+1}\to \mathbb{R}$ be such that
    \begin{equation}
    \label{eq1.1}
        \|\nabla_x f\|_{L^\infty(\mathbb{R}^{n+1})}\lesssim 1, \quad \|\partial_t^{\frac{1}{2s}}f\|_{\ast,p_s}\lesssim 1.
    \end{equation}
    Then, $f$ is $(1,\frac{1}{2s})$-Lipschitz. 
\end{thm*}

The proof of the above result follows the arguments of \cite{HoL} and adapts them to the fractional parabolic setting. For instance, one needs to work with $s$-parabolic Riesz transforms instead of the usual ones from the Euclidean case. Moreover, we need to follow the dependence on $s$ when establishing the above estimates and, in fact, in Remark \ref{rem2.1} we observe that the arguments break down when $s=1/2$, which is consistent with the theory.

This theorem allows us to define the capacity $\Gamma_{\Theta^s}$ in analogy with the classical case \cite{MPrT}. For instance, given a compact set $E\subset\mathbb{R}^{n+1}$, $\Gamma_{\Theta^s}$ is 
$$\Gamma_{\Theta^s}(E):=\sup\{|\langle \Theta^s f,1\rangle|\textbf{}\},$$
the supremum taken over all $(1,\frac{1}{2s})-$Lipschitz regular functions $f:\mathbb{R}^{n+1}\to\mathbb{R}$ satisfying the $s$-heat equation on $\mathbb{R}^{n+1}\setminus E$ and with the norms in \eqref{eq1.1} smaller or equal than one.

In \textsection \ref{sec3.2} we localize the potentials associated with the kernels $\nabla_xP_s$ and $\partial_t^{\frac{1}{2s}}P_s$. Although our methods of proof and statement of results will be analogous to those of \cite{MPrT} for the classical heat case, we will need to adapt many techniques and generalize some of the methods used in the previous reference. Indeed, we need to take into account that the operator $\Theta^s$ for $s<1$ is no longer local, so many properties will no longer hold. For example, if $s=1$ one can exploit the useful relation
\begin{equation*}
    \Delta (fg) = g \Delta f + f\Delta g + 2 \nabla f \cdot \nabla g.
\end{equation*}
However, if $s<1$ one has to work with expressions of the form
\begin{equation*}
		(-\Delta)^s(fg) = f(-\Delta)^sg+g(-\Delta)^sf-I_s(f,g),
	\end{equation*}
where $I_s$ is a crossed-terms integral defined as
\begin{equation*}
    I_s(f,g)\simeq \int_{\mathbb{R}^n}\frac{(f(x)-f(y))(g(x)-g(y))}{|x-y|^{n+2s}}\dd y.
\end{equation*}

In \textsection\ref{sec3.3} we use the localization of potentials to prove the equivalence between the removability of compact sets for $(1,\frac{1}{2s})$-Lipschitz solutions of the $\Theta^s$-equation and the nullity of $\Gamma_{\Theta^s}$. More precisely, a compact set $E\subset \mathbb{R}^{n+1}$ is said to be Lipschitz $s$-caloric removable if for any open subset $\Omega\subset \mathbb{R}^{n+1}$, any function $f:\mathbb{R}^{n+1}\to \mathbb{R}$ with $\|\nabla_x f\|_{L^\infty(\mathbb{R}^{n+1})}<\infty $ and $\|\partial_t^{\frac{1}{2s}}f\|_{\ast,p_s}<\infty$ satisfying the $\Theta^s$-equation in $\Omega\setminus{E}$, also satisfies the previous equation in $\Omega$. The main theorem of this article reads as follows:
\begin{thm*}
	Let $s\in(1/2,1]$. A compact set $E\subset \mathbb{R}^{n+1}$ is removable for Lipschitz $s$-caloric functions if and only if $\Gamma_{\Theta^s}(E)=0$.
\end{thm*}

Moreover, we establish that in $\mathbb{R}^{n+1}$ the critical $s$-parabolic Hausdorff dimension of the previous capacity is $n+1$ and provide an example, for each $s\in(1/2,1)$, of a subset with positive $\pazocal{H}_{p_s}^{n+1}$-measure and null capacity. This already shows that the capacity $\Gamma_{\Theta^s}$ has to be fundamentally different to those studied by the author in \cite{HeMPr}, since it cannot be comparable to the $s$-parabolic Hausdorff content $\pazocal{H}_{\infty, p_s}^{n+1}$. Such set will be a generalization of the Cantor set defined in \cite[\textsection 6]{MPrT} whose construction will heavily depend on the parameter $s$. In fact, as $s$ approaches the value $1/2$, our Cantor set will become progressively more and more dense (even for the first iteration of its construction) in the unit cube. 

As the reader may have noticed, our fractional generalization only considers $s\in(1/2,1]$. In fact, most of the arguments we develop below are inspired by those in \cite{Ho, HoL, MPrT}, and they break down if one considers the limit $s\to 1/2$ (in the sense that most implicit constants blow up). Hence, the study of Lipschitz $s$-caloric functions and their associated capacity remains still an open problem in the regime $s\in(0,1/2]$. The author conjectures that in the particular case $s=1/2$, where the $s$-parabolic gradient simplifies to the usual gradient and the $s$-parabolic distance is nothing but the Euclidean one, the associated capacity of a compact set should be comparable to the Lebesgue measure of the ambient space restricted to that set. The results obtained in \textsection \ref{subsec3.3.1} regarding Cantor sets together with the work of Uy in \cite{U} for analytic capacity suggest this comparability.

Finally, in \textsection \ref{sec5}, motivated by a question posed by X. Tolsa, we revisit the classical parabolic case $s=1$ and define a new capacity, different to that of \cite{MPrT}, now working with solutions of the heat equation satisfying
\begin{equation*}
	\|(-\Delta_x)^{1/2} f\|_{L^\infty(\mathbb{R})}\lesssim 1\qquad \text{and} \qquad \|\partial_t^{1/2}f\|_{\ast,p_1}\lesssim 1.
\end{equation*}
We call such capacity $\gamma_{\Theta}^{1/2}$. We prove that it shares the same critical dimension with the Lipschitz caloric capacity but observe that, at least in the plane, we have
\begin{thm*}
    The capacities $\Gamma_{\Theta}$ and $\gamma^{1/2}_{\Theta}$ are not comparable.
\end{thm*}

\textit{About the notation used in the sequel}: Constants appearing in the sequel may depend on the dimension of the ambient space and the  parameter $s$, and their value may change at different occurrences. They will frequently be denoted by the letters $c$ or $C$. The notation $A\lesssim B$ means that there exists $C$, such that $A\leq CB$. Moreover, $A\approx B$ is equivalent to $A\lesssim B \lesssim A$, while $A \simeq B$ will mean $A= CB$. If the reader finds expressions of the form $\lesssim_{\beta}$ or $\approx_\beta$, for example, this indicates that the implicit constants depend on $n,s$ and $\beta$.
    
	Since Laplacian operators (fractional or not) will frequently appear in our discussion and will always be taken with respect to spatial variables, we will adopt the notation:
	\begin{equation*}
		(-\Delta)^s:=(-\Delta_x)^s, \qquad s\in(0,1], \quad \text{and we convey $(-\Delta)^0:=\text{Id}$}.
	\end{equation*}
	We will also write $\|\cdot\|_\infty:=\|\cdot\|_{L^\infty(\mathbb{R}^{n+1})}$.
    
    In \textsection\ref{sec2}, \textsection\ref{sec3.2} and \textsection\ref{sec3.3} we fix $s\in(1/2,1)$ . The forthcoming results for the $1$-parabolic case are already covered in \cite{MPrT}.
    
\section{The \mathinhead{\Gamma_{\Theta^s}}{} capacity. Generalizing the \mathinhead{(1,\frac{1}{2s})}{}-Lipschitz condition}
\label{sec2}
We begin by introducing an equivalent definition of the $s$-parabolic norm, which is presented in \cite{Ho}. For $\ox:=(x,t)\in \mathbb{R}^{n+1}$, the quantity $\|\ox\|_{p_s}$ is defined to be the only positive solution of
\begin{equation}
	\label{eq3.0.1}
	1 = \sum_{j=1}^n \frac{x_j^2}{\|\ox\|_{p_s}^2}+\frac{t^2}{\|\ox\|_{p_s}^{4s}}.
\end{equation}
If $s=1$, one recovers the proper $1$-parabolic norm already defined in \cite{Ho}, given by the explicit expression
\begin{equation*}
	\|\ox\|_{p_1}^2=\frac{1}{2}\Big( |x|^2+\sqrt{|x|^4+4t^2} \Big).
\end{equation*}
This quantity is comparable to the parabolic norm presented in the previous section $|\ox|_{p_1}^2:=|x|^2+|t|$. In \cite{MPr}, Mateu and Prat study the case $s<1$ and introduce the following quantity comparable to $\|\ox\|_{p_s}^2$,
\begin{equation*}
	|\ox|_{p_s}^2:=|x|^2+|t|^{1/s},
\end{equation*}
that is precisely the expression we will use for the $s$-parabolic norm in this article. However, the choice of $\|\cdot\|_{p_s}$ instead of $|\cdot|_{p_s}$ presents some advantages regarding Fourier representation formulae of certain operators, as we will see in the sequel.

As in \cite{Ho} we define the $s$-parabolic fractional integral operator of order $1$ as
\begin{equation*}
    \widehat{I_{p_s} f} (\oxi) := \|\oxi\|_{p_s}^{-1}\widehat{f}(\oxi),
\end{equation*}
and its inverse
\begin{equation*}
	\widehat{D_{p_s} f} (\oxi) := \|\oxi\|_{p_s}\widehat{f}(\oxi).
\end{equation*}
Then, multiplying both sides of \eqref{eq3.0.1} by $\|x\|_{p_s}$ and taking the Fourier transform we obtain the following identity between operators
\begin{equation}
	\label{eq3.1.2}
	D_{p_s}\simeq \sum_{j=1}^n R_{j,s}\partial_j + R_{n+1,s}D_{n+1,s},
\end{equation}
where
\begin{equation*}
	\widehat{R_{j,s}} := \frac{\xi_j}{\|\oxi\|_{p_s}}, \qquad j=1,\ldots,n,
\end{equation*}
are the $s$-parabolic Riesz transforms. As observed in the comments that follow \cite[Equation (2.10)]{HoL}, operators $R_{j,s}$ satisfy analogous properties to those of usual Riesz transforms. Indeed, observe that their symbols are antisymmetric and $s$-parabolically homogeneous of degree $0$. Thus, $R_{j,s}$ are defined via convolution (in a principal value sense) against odd kernels, $s$-parabolically homogeneous of degree $-n-2s$, which are bounded in $L^p$ and $\text{BMO}_{p_s}$ (see \cite[Remark 1.3]{Pe}). We have also set $\oxi:=(\xi,\tau)$ and defined
\begin{equation*}
	\widehat{R_{n+1,s}}(\oxi):=\frac{\tau}{\|\oxi\|_{p_s}^{2s}}, \qquad \widehat{D_{n+1,s}}(\oxi):=\frac{\tau}{\|\oxi\|_{p_s}^{2s-1}}.
\end{equation*}
Observe that if $s=1/2$, the operator $D_{n+1,s}$ is nothing but a temporal derivative. The result \cite[Theorem 7.4]{HoL} establishes the comparability between  $\|D_{n+1,1}f\|_{\ast,p_1}$ and $\|\partial_t^{1/2}f\|_{\ast,p_1}$ in the $1$-parabolic case under the assumption $\|\nabla_x f\|_{\infty}\lesssim 1$. For our purposes, we will only be interested in the estimate
\begin{equation}
	\label{new_eq3.1.3}
	\|D_{n+1,s}f\|_{\ast,p_s}\lesssim \|\partial_t^{\frac{1}{2s}}f\|_{\ast,p_s}+\|\nabla_x f\|_{\infty}.
\end{equation}
To prove such a bound we simply follow the proof of \cite[Theorem 7.4]{HoL} and make dimensional adjustments. First, we observe that
\begin{equation*}
	\widehat{D_{n+1,s}}f(\oxi) = \Big[ |\tau|^{\frac{1}{2s}}m(\oxi) \Big]\widehat{f}, \qquad \text{where }\quad m(\oxi):=\frac{\tau}{|\tau|^{\frac{1}{2s}}\|\oxi\|_{p_s}^{2s-1}}.
\end{equation*}
Now, since $m$ is not smooth in $\mathbb{R}^{n+1}\setminus{\{0\}}$, let us consider an even function $\phi\in \pazocal{C}^{\infty}(\mathbb{R})$, which equals 1 on $(2/K,\infty)$ and is supported on $(-\infty,-1/K)\cup(1/K,\infty)$, for some $K\geq 2$. We also choose $\phi$ so that $\|\partial^l\phi \|_{\infty}\lesssim K^l$, for $0\leq l \leq n+5$. Let us introduce the following auxiliary functions,
\begin{align*}
	m^+(\oxi):=m(\oxi)\phi\bigg( \frac{\tau}{|\xi|^{2s}} \bigg), \qquad m^{++}(\oxi)&:=\frac{|\tau|^{\frac{1}{2s}}\|\oxi\|_{p_s}}{|\xi|^2}m(\oxi)\big(1-\phi \big)\bigg( \frac{\tau}{|\xi|^{2s}} \bigg),\\
	m_j^{++}(\oxi)&:=\frac{\xi_j}{\|\oxi\|_{p_s}}m^{++}(\oxi).
\end{align*}
This way, we rewrite $\widehat{D_{n+1,s}f}$ as follows:
\begin{align}
	\widehat{D_{n+1,s}f}(\oxi) &= \bigg[ m^+(\oxi)|\tau|^{\frac{1}{2s}} + m(\oxi)\big( 1-\phi \big)\bigg( \frac{\tau}{|\xi|^{2s}} \bigg)|\tau|^{\frac{1}{2s}} \bigg]\widehat{f}(\oxi)\nonumber \\
	&=\bigg[ m^+(\oxi)|\tau|^{\frac{1}{2s}} + \sum_{j=1}^n \frac{\xi_j}{\|\oxi\|_{p_s}} m^{++}(\oxi) \xi_j \bigg]\widehat{f}(\oxi)\nonumber \\
	&=\bigg[ m^+(\oxi)|\tau|^{\frac{1}{2s}} + \sum_{j=1}^n m^{++}_j(\oxi) \xi_j \bigg]\widehat{f}(\oxi).
	\label{eq3.1.3}
\end{align}
Regarding $m^+$, observe that it is an odd multiplier, smooth in $\mathbb{R}^{n+1}\setminus{\{0\}}$ and $s$-parabolically homogeneous of degree 0. Therefore, by \cite[Proposition 2.4.7]{Gr} (which admits a direct adaptation to the $s$-parabolic case), $m^+$ is associated to a convolution kernel $L^+$, odd, $s$-parabolically homogeneous of degree $-n-2s$ and bounded on $L^p$ and $\text{BMO}_{p_s}$ (see again \cite[Remark 1.3]{Pe}).

Let us turn to $m_j^{++}$, which in particular lacks the smoothness properties of $m^+$. Our first goal will be to prove that there exists $L_j^{++}$ convolution kernel associated to $m_j^{++}$ and that is bounded from $L^\infty$ to $\text{BMO}_{p_s}$. We begin by noticing that
\begin{equation*}
	\text{supp}(m_j^{++})\subset \bigg\{ (\xi,\tau)\,:\, 0\leq |\tau| \leq \frac{2|\xi|^{2s}}{K} \bigg\},
\end{equation*}
meaning that in the support of $m_j^{++}$ we have $\|\xi\|_{p_s}\approx |\xi|$. Moreover,
\begin{equation*}
	|m_j^{++}(\oxi)|=\frac{|\tau|^{\frac{1}{2s}}|\xi_j|}{|\xi|^2}\big(1-\phi \big)\bigg( \frac{\tau}{|\xi|^{2s}} \bigg)|m(\oxi)|\lesssim |m(\oxi)|\leq \frac{|\tau|^{1-\frac{1}{2s}}}{\|\oxi\|_{p_s}^{2s-1}}\lesssim 1.
\end{equation*}
In fact, more generally we have for $\alpha\in\{0,1,2,\ldots\}$ and $\beta\in\{0,1,2\ldots\}^n$,
\begin{align}
	\label{eq3.1.4}
	|\partial_{\tau}^{\alpha} \partial_{\xi}m_j^{++}(\oxi)|&\lesssim |\tau|^{1-\frac{1}{2s}-\alpha}\|\oxi\|_{p_s}^{-2s+1-|\beta|},\\
	\label{eq3.1.5} |\partial_{\tau}^{\alpha} \partial_{\xi}(\xi_jm_j^{++})(\oxi)|&\lesssim |\tau|^{1-\frac{1}{2s}-\alpha}\|\oxi\|_{p_s}^{-2s+2-|\beta|},\\
	\label{eq3.1.6} |\partial_{\tau}^{\alpha} \partial_{\xi}(\tau m_j^{++})(\oxi)|&\lesssim |\tau|^{2-\frac{1}{2s}-\alpha}\|\oxi\|_{p_s}^{-2s+1-|\beta|}.
\end{align}
We will use the above inequalities to justify the existence of $L_j^{++}$. To do so, let $\{g_i\}$ be a smooth partition of unity of $(0,\infty)$ with $g_i\equiv 1$ on $(2^{-i},2^{-i+1})$ and $\text{supp}(g_i)\subset (2^{-i-1},2^{-i+2})$, for $i\in \mathbb{Z}$. Let $\{L_{j,i}^{++}\}$ be the kernels corresponding to
\begin{equation*}
	m_{j,i}^{++}(\oxi):=m_{j}^{++}(\oxi)\cdot g_i(\|\oxi\|_{p_s}),
\end{equation*}
so that
\begin{equation*}
	\text{supp}(m_{j,i}^{++})\subset \bigg\{ (\xi,\tau)\,:\, 0\leq \frac{\tau}{|\xi|^{2s}} \leq \frac{2}{K}, \; \frac{1}{2^{i+1}}\leq \|\oxi\|_{p_s}\leq \frac{1}{2^{i-2}} \bigg\}.
\end{equation*}
We observe that
\begin{equation*}
	\|L_{j,i}^{++}\|_\infty \lesssim \|m_{j,i}^{++}\|_1\lesssim 2^{-i(n+2s)}, \qquad \text{since }\quad \|m_{j}^{++}\|_{\infty}\lesssim 1.
\end{equation*}
Similarly, for any $\beta\in\{0,1,2,\ldots\}^n$ with $|\beta|=n+3$ we have by \eqref{eq3.1.4},
\begin{equation*}
	|\xi^\beta L_{j,i}^{++}(\xi,\tau)|\lesssim \|\partial_\xi^\beta m_{j,i}^{++}\|_1\lesssim 2^{i(3-2s)}.
\end{equation*}
Moreover, for any $\beta\in\{0,1,2,\ldots\}^n$ with $|\beta|=n$,
\begin{equation*}
	\big\rvert |\tau|^{\frac{1}{4s}}\tau\xi^\beta L_{j,i}^{++}(\xi,\tau) \big\rvert \lesssim \|\partial_\tau^{\frac{1}{4s}}(\partial_\tau \partial_\xi^\beta m_{j,i}^{++})\|_1.
\end{equation*}
Using again \eqref{eq3.1.4} and the definition of $\partial_\tau^{\frac{1}{4s}}$ we get, setting $\sigma:= \partial_\tau \partial_\xi^\beta m_{j,i}^{++}$,
\begin{align*}
	\bigg\rvert \int_{\mathbb{R}}\frac{\sigma(\xi,\tau)-\sigma(\xi,r)}{|t-r|^{1+\frac{1}{4s}}} \dd r \bigg\rvert 
	\begin{dcases}
		= 0, & \text{if }\, |\xi|\geq C2^{-i}, \\
		\lesssim \frac{2^{in}}{|\tau|^{1+\frac{1}{4s}}}, & \text{if }\, |\tau| > \frac{4|\xi|^{2s}}{K}, \\
		\lesssim \frac{2^{i(n+2s-1)}}{|\tau|^{\frac{3}{4s}}}, & \text{if }\, |\tau|\leq \frac{8|\xi|^{2s}}{K}\leq C'2^{-2si}, \\
	\end{dcases}
\end{align*}
where $C,C'$ are constants large enough, depending on $n$ and $s$. Then,
\begin{equation*}
	\|\partial_\tau^{\frac{1}{4s}}(\partial_\tau \partial_\xi^\beta m_{j,i}^{++})\|_1 \lesssim 2^{i/2}.
\end{equation*}
Therefore, we have obtained
\begin{equation*}
	|L_{j,i}^{++}(\xi,\tau)|\lesssim \min\big\{ 2^{-i(n+2s)}, 2^{i(3-2s)}|\xi|^{-(n+3)}, 2^{i/2}|\tau|^{-1-\frac{1}{4s}}|\xi|^{-n} \big\}.
\end{equation*}
Proceeding analogously using \eqref{eq3.1.5} and \eqref{eq3.1.6}, the following bounds follow
\begin{align*}
	|\nabla_\xi L_{j,i}^{++}(\xi,\tau)|&\lesssim \min\big\{ 2^{-i(n+2s+1)}, 2^{i(3-2s)}|\xi|^{-(n+4)}, 2^{i/2}|\tau|^{-1-\frac{1}{4s}}|\xi|^{-(n+1)} \big\},\\
	|\partial_\tau L_{j,i}^{++}(\xi,\tau)|&\lesssim \min\big\{ 2^{-i(n+4s)}, 2^{i(5-4s)}|\xi|^{-(n+5)}, 2^{i/2}|\tau|^{-1-\frac{1}{4s}}|\xi|^{-(n+2)} \big\}.
\end{align*}
Now let $L_j^{++}:=\sum_i L_{j,i}^{++}$ whenever the sum converges absolutely. Let us distinguish two cases: if $|\xi|^{2s+1/2}\gtrsim |\tau|^{1+\frac{1},{4s}}$ we get
\begin{align*}
	|L_{j}^{++}(\xi,\tau)| &\leq \sum_i |L_{j,i}^{++}(\xi,\tau)|\\
	&\lesssim \sum_{\{i:\, |\xi|\leq 2^i\}} 2^{-i(n+2s)}+\sum_{\{i:\, |\xi|>2^i\}} 2^{i(3-2s)}|\xi|^{-(n+3)}\lesssim |\xi|^{-(n+2s)}.
\end{align*}
If $|\xi|^{2s+1/2}\lesssim |\tau|^{1+\frac{1}{4s}}$ and $\lambda>0$, we have
\begin{align*}
	|L_{j}^{++}(\xi,\tau)| &\leq \sum_i |L_{j,i}^{++}(\xi,\tau)|\\
	&\leq \sum_{\{i:\, 2^{-i}\leq \lambda\}} C_12^{-i(n+2s)}+C_2|\tau|^{-1-\frac{1}{4s}}|\xi|^{-n}\sum_{\{i:\, 2^{-i}>\lambda\}} 2^{i/2}\\
	&\leq C_1\lambda^{n+2s}+C_2|\tau|^{-1-\frac{1}{4s}}|\xi|^{-n}\lambda^{-1/2}.
\end{align*}
This last expression is minimized for
\begin{equation*}
	\lambda \simeq \Big( |\tau|^{-1-\frac{1}{4s}}|\xi|^{-n} \Big)^{\frac{1}{n+2s+1/2}},
\end{equation*}
and therefore
\begin{equation*}
	|L_{j}^{++}(\xi,\tau)|\lesssim \Big( |\tau|^{-1-\frac{1}{4s}}|\xi|^{-n} \Big)^{\frac{n+2s}{n+2s+1/2}},
\end{equation*}
that, under condition $|\xi|^{2s+1/2}\lesssim |\tau|^{1+\frac{1}{4s}}$, is smaller than $|\xi|^{-(n+2s)}$. So we have obtained that $L_j^{++}$ is well-defined in $\mathbb{R}^{n+1}\setminus{\{0\}}$ and satisfies
\begin{equation*}
	|L_{j}^{++}(\xi,\tau)|\lesssim \min\Big\{ |\xi|^{-(n+2s)}, \Big( |\tau|^{-1-\frac{1}{4s}}|\xi|^{-n} \Big)^{\frac{n+2s}{n+2s+1/2}} \Big\}.
\end{equation*}
Again, following an analogous procedure one gets
\begin{align*}
	|\nabla_\xi L_{j}^{++}(\xi,\tau)|&\lesssim \min\Big\{ |\xi|^{-(n+2s+1)}, \Big( |\tau|^{-1-\frac{1}{4s}}|\xi|^{-n-1} \Big)^{\frac{n+2s}{n+2s+1/2}} \Big\},\\
	|\partial_\tau L_{j}^{++}(\xi,\tau)|&\lesssim \min\Big\{ |\xi|^{-(n+4s)}, \Big( |\tau|^{-1-\frac{1}{4s}}|\xi|^{-n-2} \Big)^{\frac{n+2s}{n+2s+1/2}} \Big\}.
\end{align*}
Let us also notice that for any dimension $n$ and, in fact, any $s\in (0,1)$,
\begin{equation*}
	\bigg( 1+\frac{1}{4s} \bigg)\bigg( \frac{n+2s}{n+2s+1/2} \bigg) \geq \frac{17}{16}>1.
\end{equation*}
From this point on, we are able to follow the arguments of \cite[p. 407]{HoL} to deduce that the principal value convolution operator associated to $L_j^{++}$ exists and maps $L^\infty$ to $\text{BMO}_{p_s}$ (the arguments are those already given in \cite{Pe}). So returning to \eqref{eq3.1.3} we have
\begin{equation*}
	D_{n+1,s}f(\ox) = C_1(L^+\ast \partial_t^{\frac{1}{2s}}f)(\ox)+C_2\sum_{j=1}^n(L_j^{++}\ast \partial_j f)(\ox),
\end{equation*}
and therefore
\begin{equation*}
	\|D_{n+1,s}f\|_{\ast,p_s}\lesssim \|\partial_t^{\frac{1}{2s}}f\|_{\ast,p_s}+\|\nabla_x f\|_\infty,
\end{equation*}
that is what we wanted to prove. Let us also mention that the roll of the parameter $K\geq 2$ introduced in the above arguments becomes clear if one follows its dependence in the different inequalities previously established. Doing so, and adapting the arguments of \cite[Theorem 7.4]{HoL}, one is able to prove a reverse inequality of the form $\|\partial_t^{\frac{1}{2s}}f\|_{\ast,p_s}\lesssim \|D_{n+1,s}f\|_{\ast,p_s} + \|\nabla_x f\|_\infty$, although we have not presented the details since it is not necessary in our context.

In any case, now is just a matter of applying \cite[Lemma 1]{Ho} to a function $f$ satisfying $\|\nabla_x f\|_\infty \lesssim 1$ and $\|\partial_t^{\frac{1}{2s}}f\|_{\ast,p_s}\lesssim 1$. Then, returning to \eqref{eq3.1.2},
\begin{align*}
	\|D_{p_s}f\|_{\ast,p_s}&\lesssim \sum_{j=1}^n \|R_{j,s}\partial_j f\|_{\ast,p_s} + \|R_{n+1,s}D_{n+1,s} f \|_{\ast,p_s}\\
	&\lesssim \|\nabla_x f\|_\infty +\|D_{n+1,s}f\|_{\ast,p_s} \lesssim \|\nabla_x f\|_{\infty} + \|\partial_t^{\frac{1}{2s}}f\|_{\ast,p_s}\lesssim 1,
\end{align*}
where this last inequality is due to \eqref{new_eq3.1.3}. Therefore, by \cite[Lemma 1]{Ho}, which also holds in the fractional parabolic context (since we also have $H^1-\text{BMO}_{p_s}$ duality (see \cite[\textsection 2, Theorem B]{CoW}, for example) and because $s$-parabolic Riesz kernels enjoy the expected regularity properties a mentioned above), we finally get the desired result:
\begin{thm}
	\label{thm3.1.1}
	Let $s\in(1/2,1]$ and $f:\mathbb{R}^{n+1}\to \mathbb{R}$ be such that $\|\nabla_x f\|_{\infty}\lesssim 1$ and $\|\partial_t^{\frac{1}{2s}}f\|_{\ast,p_s}\lesssim 1$. Then, $f$ is $(1,\frac{1}{2s})$-Lipschitz. 
\end{thm}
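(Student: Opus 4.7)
The plan is to reduce the conclusion to the $s$-parabolic analogue of \cite[Lemma 1]{Ho}, which characterizes $(1,\frac{1}{2s})$-Lipschitz functions in terms of a BMO-type bound on their $s$-parabolic derivative $D_{p_s} f$. Granted that lemma (whose proof transfers to the fractional parabolic setting by the $H^1$--$\text{BMO}_{p_s}$ duality of \cite[\S2, Theorem B]{CoW} and the regularity properties of the $s$-parabolic Riesz kernels from \cite[Remark 1.3]{Pe}), the entire task reduces to exhibiting the estimate $\|D_{p_s} f\|_{\ast,p_s} \lesssim 1$ under the hypotheses.

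To obtain such a bound, I would apply the operator identity \eqref{eq3.1.2}, which decomposes $D_{p_s} f \simeq \sum_{j=1}^n R_{j,s}\partial_j f + R_{n+1,s} D_{n+1,s} f$, and then use that each $R_{j,s}$ is an $s$-parabolic singular integral with odd kernel, $s$-parabolically homogeneous of degree $-n-2s$, hence bounded both from $L^\infty$ into $\text{BMO}_{p_s}$ and from $\text{BMO}_{p_s}$ into itself. Termwise this yields
\begin{equation*}
\|D_{p_s} f\|_{\ast,p_s} \lesssim \sum_{j=1}^n \|\partial_j f\|_\infty + \|D_{n+1,s} f\|_{\ast,p_s} \lesssim \|\nabla_x f\|_\infty + \|D_{n+1,s} f\|_{\ast,p_s},
\end{equation*}
and I would control the remaining cross term by the key estimate \eqref{new_eq3.1.3}, namely $\|D_{n+1,s} f\|_{\ast,p_s} \lesssim \|\partial_t^{\frac{1}{2s}} f\|_{\ast,p_s} + \|\nabla_x f\|_\infty$. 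Combining these two inequalities with the hypotheses $\|\nabla_x f\|_\infty \lesssim 1$ and $\|\partial_t^{\frac{1}{2s}} f\|_{\ast,p_s} \lesssim 1$ delivers $\|D_{p_s} f\|_{\ast,p_s} \lesssim 1$, and the fractional Hofmann lemma then concludes that $f$ is $(1,\frac{1}{2s})$-Lipschitz.

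The step that truly carries the content is \eqref{new_eq3.1.3}, already established in the preceding discussion; once that estimate and the fractional parabolic Hofmann lemma are in hand, the theorem is a short assembly. The main obstacle, confronted in proving \eqref{new_eq3.1.3}, is the lack of smoothness of the symbol of $D_{n+1,s}$ across $\{\tau = 0\}$. This was circumvented by the cutoff-and-dyadic decomposition of $m$ into a smooth homogeneous piece $m^+$ (treated via an $s$-parabolic adaptation of \cite[Proposition 2.4.7]{Gr}) and the auxiliary pieces $m_j^{++}$, for which a careful multiplier analysis, a fractional time-derivative argument, and optimization in a scale parameter yield a kernel $L_j^{++}$ mapping $L^\infty$ into $\text{BMO}_{p_s}$.
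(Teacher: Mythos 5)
Your proposal follows the paper's proof essentially verbatim: decompose $D_{p_s}$ via \eqref{eq3.1.2}, use the $L^\infty\to\text{BMO}_{p_s}$ and $\text{BMO}_{p_s}\to\text{BMO}_{p_s}$ boundedness of the $s$-parabolic Riesz transforms, control the cross term by the key estimate \eqref{new_eq3.1.3} (whose proof you correctly identify as the substantive part, via the $m^+$/$m_j^{++}$ cutoff-and-dyadic decomposition), and close with the $s$-parabolic version of \cite[Lemma 1]{Ho}. The argument and its organization match the paper's.
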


\begin{rem}
\label{rem2.1}
    In the above arguments it is not clear why the restriction $s>1/2$ is necessary. In fact, relation \eqref{new_eq3.1.3} becomes trivial if $s=1/2$, since the operator $D_{n+1,s}$ is just the ordinary derivative $\partial_t$. The necessity of $s>1/2$ is required in order for estimate \cite[Equation 14]{Ho} to hold, that in our current setting reads as follows:
    \begin{equation*}
        \iint \bigg\rvert \frac{1}{|(y,u-t)|_{p_s}^{n+2s-1}}-\frac{1}{|(y,u)|_{p_s}^{n+2s-1}} \bigg\rvert \dd y \dd u \leq C |t|^{\frac{1}{2s}}. 
    \end{equation*}
    It is not difficult to prove that if $s\leq 1/2$, such finite constant $C$ does not exist and we could not proceed. In fact, the result for $s=1/2$ \textit{must} be false since, in general, it is not true that a function $f$ satisfying $\|\nabla_x f\|_{\infty}\lesssim 1$ and $\|\partial_tf\|_{\ast}\lesssim 1$ is Lipschitz.
\end{rem}

In light of the above theorem, we are able to define the so-called Lipschitz $s$-caloric capacity and the notion of removability in a more general manner as follows:
\begin{defn}
	For $E\subset \mathbb{R}^{n+1}$ compact set, its \textit{Lipschitz $s$-caloric capacity} is
	\begin{equation*}
		\Gamma_{\Theta^s}(E):=\sup  |\langle T, 1 \rangle| ,
	\end{equation*}
	where the supremum is taken among all distributions $T$ with $\text{supp}(T)\subset E$ and satisfying
	\begin{equation*}
		\|\nabla_x P_s\ast T\|_{\infty} \leq 1, \hspace{0.75cm} \|\partial^{\frac{1}{2s}}_tP_s\ast T\|_{\ast, p_s}\leq 1.
	\end{equation*}
	Such distributions will be called \textit{admissible for $\Gamma_{\Theta^s}(E)$}. If the supremum is taken only among positive Borel measures supported on $E$ satisfying the same normalization conditions, we obtain the smaller capacity $\Gamma_{\Theta^s,+}(E)$.
\end{defn}

\begin{defn}
	 A compact set $E\subset \mathbb{R}^{n+1}$ is said to be \textit{removable for Lipschitz $s$-caloric functions} if for any open subset $\Omega\subset \mathbb{R}^{n+1}$, any function $f:\mathbb{R}^{n+1}\to \mathbb{R}$ with
	\begin{equation*}
		\|\nabla_x f\|_{\infty} < \infty, \hspace{0.75cm} \|\partial_t^{\frac{1}{2s}}f\|_{\ast, p_s}<\infty,
	\end{equation*}
	satisfying the $\Theta^s$-equation in $\Omega\setminus{E}$, also satisfies such equation in $\Omega$.
\end{defn}

\section{Localization of potentials}
\label{sec3.2}

Our next goal will be to prove a so-called localization result. Previous to that, let us recall some basic notions regarding growth of measures and distributions. We say that a positive Borel measure $\mu$ in $\mathbb{R}^{n+1}$ has upper $s$-parabolic \textit{growth of degree $\rho$ \text{\normalfont{(}}with constant $C$\text{\normalfont{)}}} or simply $s$\textit{-parabolic $\rho$-growth} if there is some constant $C(n,s)>0$ such that for any $s$-parabolic ball $B(\ox,r)$,
	\begin{equation*}
		\mu\big( B(\overline{x},r) \big) \leq Cr^\rho.
	\end{equation*}
It is clear that this property is invariant if formulated using cubes instead of balls. We will be interested in a generalized version of such growth that can be defined not only for measures, but also for general distributions. To introduce such notion we present the concept of admissible function:
\begin{defn}
    Let $s\in(0,1)$. Given $\phi\in \pazocal{C}^\infty(\mathbb{R}^{n+1})$, we will say that it is an \textit{admissible} function for an $s$-parabolic cube $Q$ in $\mathbb{R}^{n+1}$ if $\text{supp}(\phi)\subset Q$ and
    \begin{equation*}
        \|\phi\|_\infty \leq 1, \qquad \|\nabla_x\phi\|_\infty \leq \ell(Q)^{-1}, \qquad \|\partial_t\phi\|_\infty \leq \ell(Q)^{-2s}, \qquad \|\Delta \phi\|_\infty \leq \ell(Q)^{-2}.
    \end{equation*}
\end{defn}

If $\phi$ is a $\pazocal{C}^{2}$ function supported on $Q\subset \mathbb{R}^{n+1}$ $s$-parabolic cube with $\|\phi\|_\infty \leq 1$, $\|\nabla_x\phi\|_\infty \leq \ell(Q)^{-1}$ and $\|\Delta \phi\|_\infty \leq \ell(Q)^{-2}$, then it is not hard to prove \cite[Remark 3.1]{HeMPr} that it also satisfies
\begin{equation*}
    \|(-\Delta)^s\phi\|_\infty \lesssim \ell(Q)^{-2s}.
\end{equation*}
\begin{defn}

We will say that a distribution $T$ in $\mathbb{R}^{n+1}$ has $s$-parabolic $n$-\textit{growth} if there exists some constant $C=C(n,s)>0$ such that, given any $s$-parabolic cube $Q\subset \mathbb{R}^{n+1}$ and any function $\phi$ admissible for $Q$, we have
\begin{equation*}
    |\langle T, \phi \rangle |\leq C \ell(Q)^n.
\end{equation*}
\end{defn}

Let observe that, as expected, admissible distributions for $\Gamma_{\Theta^s}$ satisfy a certain upper $s$-parabolic growth property analogous to that of \cite[Theorem 5.2]{HeMPr}.
\begin{thm}
	\label{thm3.2.1}
	Let $T$ be a distribution in $\mathbb{R}^{n+1}$ with
	\begin{equation*}
		\|\nabla_x P_s\ast T\|_{\infty} \leq 1, \hspace{0.75cm} \|\partial^{\frac{1}{2s}}_tP_s\ast T\|_{\ast,p_s}\leq 1.
	\end{equation*}
	Let $Q$ be a fixed $s$-parabolic cube and $\varphi$ an admissible function for $Q$. Then, if $R$ is any $s$-parabolic cube with $\ell(R)\leq \ell(Q)$ and $\phi$ is admissible for $R$, we have $|\langle \varphi T, \phi \rangle|\lesssim \ell(R)^{n+1}$.
\end{thm}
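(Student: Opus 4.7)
The plan is to introduce the function $U := P_s \ast T$ and exploit its Lipschitz regularity. Since $\|\,\cdot\,\|_{\ast, p_s} \lesssim \|\,\cdot\,\|_\infty$, the hypotheses combined with Theorem \ref{thm3.1.1} imply that $U$ is $(1, \frac{1}{2s})$-Lipschitz on $\mathbb{R}^{n+1}$, so in particular $|U(\overline{x}) - U(\overline{y})| \lesssim |\overline{x} - \overline{y}|_{p_s}$ for all points. Because $\Theta^s P_s = \delta_0$, we have the distributional identity $T = (-\Delta)^s U + \partial_t U$, which yields
\begin{equation*}
    \langle \varphi T, \phi \rangle = \langle T, \varphi\phi \rangle = \langle U, (-\Delta)^s(\varphi\phi) \rangle - \langle U, \partial_t(\varphi\phi) \rangle.
\end{equation*}
Both pairings are honest Lebesgue integrals: $(-\Delta)^s(\varphi\phi)$ decays as $|x|^{-n-2s}$ spatially while $U$ grows at most linearly, so the product is integrable at infinity precisely because $s > 1/2$.

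Both $(-\Delta)^s(\varphi\phi)$ and $\partial_t(\varphi\phi)$ have zero integral over $\mathbb{R}^{n+1}$: the first because the spatial symbol $|\xi|^{2s}$ vanishes at the origin, the second by compact support in $t$. Letting $\overline{x}_R = (x_R, t_R)$ denote the center of $R$, we may therefore replace $U$ by $U - U(\overline{x}_R)$ in both pairings. For the time derivative term, $\partial_t(\varphi\phi)$ is supported in $R$ (since $\phi$ is) with $\|\partial_t(\varphi\phi)\|_\infty \lesssim \ell(R)^{-2s}$ (here the assumption $\ell(R) \leq \ell(Q)$ is used to absorb $\|\partial_t\varphi\|_\infty$); together with $|U - U(\overline{x}_R)| \lesssim \ell(R)$ on $R$ and $|R| = \ell(R)^{n+2s}$, this gives a contribution $\lesssim \ell(R)^{n+1}$.

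The fractional Laplacian term is handled by splitting the spatial integration at scale $\ell(R)$. In the near region $\{|x - x_R| \lesssim \ell(R)\}$, $\|\Delta(\varphi\phi)\|_\infty \lesssim \ell(R)^{-2}$ yields $|(-\Delta)^s(\varphi\phi)| \lesssim \ell(R)^{-2s}$ by the standard second-difference estimate, and multiplying by $\ell(R) \cdot \ell(R)^{n+2s}$ again produces $\ell(R)^{n+1}$. In the far region, since $\varphi\phi(\cdot, t)$ is spatially supported in a set of $\mathcal{L}^n$-mass $\lesssim \ell(R)^n$,
\begin{equation*}
    |(-\Delta)^s(\varphi\phi)(\overline{x})| \lesssim \ell(R)^n |x - x_R|^{-n-2s},
\end{equation*}
while $|U(\overline{x}) - U(\overline{x}_R)| \lesssim |x - x_R|$ when $|x - x_R| \gg \ell(R)$. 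Integrating in polar coordinates and over the time interval of $R$ of length $\ell(R)^{2s}$ yields
\begin{equation*}
    \ell(R)^{2s} \cdot \ell(R)^n \int_{\ell(R)}^\infty r^{-2s} \dd r \simeq \frac{\ell(R)^{n+1}}{2s-1}.
\end{equation*}
The principal obstacle is the convergence of this last integral at infinity: the tail $r^{-2s}$ is integrable only for $2s > 1$, which matches exactly the hypothesis $s > 1/2$ and is the sole place in the argument where this restriction becomes essential.
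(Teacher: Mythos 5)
The paper itself does not give a direct proof of Theorem~\ref{thm3.2.1}; it defers to \cite[Theorem 5.2]{HMP}, an external reference. Your argument is therefore a self-contained alternative, and overall it is correct. Setting $U = P_s\ast T$, invoking Theorem~\ref{thm3.1.1} (using $\|\cdot\|_{\ast,p_s}\lesssim\|\cdot\|_\infty$), passing to the adjoint identity $\langle T,\varphi\phi\rangle = \langle U,(-\Delta)^s(\varphi\phi)\rangle - \langle U,\partial_t(\varphi\phi)\rangle$, and using the vanishing of both integrals to subtract $U(\ox_R)$ is a clean reduction. The time-derivative term, the near/far split for the fractional Laplacian term, and the final power counting all check out: each piece is $\lesssim\ell(R)^{n+1}$, with $2s>1$ exactly what makes the far tail $\int_{\ell(R)}^\infty r^{-2s}\,dr$ converge. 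The hypothesis $\ell(R)\le\ell(Q)$ is used, as you note, to absorb the $\varphi$-derivative bounds into those of $\phi$.

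Two small imprecisions worth flagging. First, you justify the near-region bound $|(-\Delta)^s(\varphi\phi)|\lesssim\ell(R)^{-2s}$ by ``the standard second-difference estimate,'' but that estimate uses the full Hessian $\|D^2(\varphi\phi)\|_\infty$, which the admissibility condition does \emph{not} control --- it only controls $\|\Delta(\varphi\phi)\|_\infty$. The conclusion is nevertheless true, but the correct route is the semigroup interpolation
\begin{equation*}
(-\Delta)^s g = \frac{s}{\Gamma(1-s)}\int_0^\infty t^{-s-1}\bigl(g - e^{t\Delta}g\bigr)\,\dd t,
\qquad |g-e^{t\Delta}g|\le\min\bigl(2\|g\|_\infty,\,t\|\Delta g\|_\infty\bigr),
\end{equation*}
which yields $\|(-\Delta)^s g\|_\infty\lesssim\|g\|_\infty^{1-s}\|\Delta g\|_\infty^{s}$ from the Laplacian alone; with $\|\varphi\phi\|_\infty\le1$ and $\|\Delta(\varphi\phi)\|_\infty\lesssim\ell(R)^{-2}$ this gives $\ell(R)^{-2s}$ as claimed. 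Second, you say the convergence of the far tail is ``the sole place'' where $s>1/2$ enters; but Theorem~\ref{thm3.1.1}, which you invoke to obtain the $(1,\tfrac{1}{2s})$-Lipschitz bound on $U$, is itself proved in the paper only for $s>1/2$ (and as the paper's Remark after Theorem~\ref{thm3.1.1} explains, it genuinely fails at $s=1/2$). So the restriction is used twice.
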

\begin{proof}
	Is a direct consequence of \cite[Theorem 5.2]{HeMPr}.
\end{proof}

Let us turn our attention to the localization result we are interested in. This type of result will ensure that if a distribution $T$ satisfies
\begin{equation*}
	\|\nabla_x P_s \ast T\|_{\infty}\leq 1, \qquad \|\partial_t^{\frac{1}{2s}}P_s\ast T\|_{\ast,p_s}\leq 1,
\end{equation*}
then
\begin{equation*}
	\|\nabla_x P_s \ast \varphi T\|_{\infty}\lesssim 1, \qquad \|\partial_t^{\frac{1}{2s}}P_s\ast \varphi T\|_{\ast,p_s}\lesssim 1,
\end{equation*}
where $\varphi$ is an admissible function for some $s$-parabolic cube. That is, $\varphi T$, the \textit{localized} distribution, is admissible for $\Gamma_{\Theta^s}(Q)$. We begin by estimating $\|\nabla_x P_s \ast \varphi T\|_{\infty}$ in the following result (stated in a more general way taking into account Theorem \ref{thm3.1.1}).

\begin{lem}
	\label{lem3.2.2}
	Let $T$ be a distribution in $\mathbb{R}^{n+1}$ satisfying
	\begin{equation*}
		\|\nabla_x P_s \ast T\|_{\infty}\leq 1, \qquad \|P_s\ast T\|_{\text{\normalfont{Lip}}_{\frac{1}{2s},t}}\leq 1.
	\end{equation*}
	Let $Q$ be an $s$-parabolic cube and $\varphi$ admissible function for $Q$. Then,
	\begin{equation*}
		\|\nabla_x P_s \ast \varphi T\|_{\infty}\lesssim 1.
	\end{equation*}
\end{lem}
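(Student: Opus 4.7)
The plan is to exploit the fundamental-solution identity $\Theta^s P_s = \delta_0$ to recast $P_s \ast \varphi T$ in terms of the potential $U := P_s \ast T$. Since $T = \Theta^s U$ distributionally, a direct computation yields
\begin{equation*}
\varphi T = \Theta^s(\varphi U) - [(-\Delta_x)^s, \varphi]\, U - (\partial_t \varphi)\, U,
\end{equation*}
and convolving with $P_s$ and taking the spatial gradient produces
\begin{equation*}
\nabla_x P_s \ast (\varphi T) = (\nabla_x \varphi)\, U + \varphi\, \nabla_x U - \nabla_x P_s \ast \bigl((\partial_t \varphi)\, U\bigr) - \nabla_x P_s \ast \bigl([(-\Delta_x)^s, \varphi]\, U\bigr).
\end{equation*}
Because $T$ determines $U$ only up to $\Theta^s$-harmonic functions and the identity above is unchanged under $U \mapsto U - c$, I would normalize by $c = U(\ox_Q)$, with $\ox_Q$ the center of $Q$. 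The hypothesis together with Theorem \ref{thm3.1.1} gives that $U$ is $(1,\tfrac{1}{2s})$-Lipschitz, so $|U(\oy) - c| \lesssim |\oy - \ox_Q|_{p_s}$; in particular $\lesssim \ell(Q)$ on any fixed parabolic dilate of $Q$.

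With this normalization the first two terms are immediate: $(\nabla_x \varphi)(U - c)$ is supported on $Q$ with $L^\infty$-norm bounded by $\ell(Q)^{-1}\cdot \ell(Q) = 1$, and $\varphi\, \nabla_x U$ by $\|\varphi\|_\infty \|\nabla_x U\|_\infty \leq 1$. For the third term, $(\partial_t \varphi)(U-c)$ is supported on $Q$ with $L^\infty$-norm $\lesssim \ell(Q)^{-2s}\cdot \ell(Q) = \ell(Q)^{1-2s}$; combined with the elementary estimate $\int_{|\oz|_{p_s}\leq R}|\nabla_x P_s(\oz)|\, d\oz \lesssim R^{2s-1}$ (which is where $s>1/2$ is essential), distinguishing $\ox_0 \in CQ$ from $\ox_0 \notin CQ$ gives the bound.

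The main work lies in the fourth term. Setting $g := [(-\Delta_x)^s, \varphi](U-c)$, I would use the representation
\begin{equation*}
g(x,t) = \mathrm{p.v.}\!\int \frac{(\varphi(x,t) - \varphi(y,t))(U(y,t) - U(x,t))}{|x-y|^{n+2s}}\, dy + (U(x,t)-c)\,(-\Delta_x)^s \varphi(x,t),
\end{equation*}
noting first that $g$ vanishes outside the time-strip determined by the temporal edge of $Q$. Admissibility of $\varphi$ ($|\nabla_x \varphi|\leq \ell^{-1}$ and the standard $|(-\Delta_x)^s \varphi|\lesssim \ell^{-2s}$), together with $\|\nabla_x U\|_\infty \leq 1$ and the centered Lipschitz bound on $U$, should give
\begin{equation*}
|g(x,t)| \lesssim \min\bigl\{\ell(Q)^{1-2s},\; \ell(Q)^n\, d(\ox)^{-(n+2s-1)}\bigr\},
\end{equation*}
where $d(\ox) := \text{dist}_{p_s}(\ox, Q)$. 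A two-scale integration of this estimate against $|\nabla_x P_s(\ox_0 - \oy)| \lesssim |\ox_0 - \oy|_{p_s}^{-(n+1)}$, splitting by whether $\ox_0 \in CQ$ and separately by whether $d(\oy) < \ell(Q)$ or $d(\oy) \geq \ell(Q)$, then yields the bound for this term.

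The main obstacle will be the pointwise estimate on $g$ off $Q$: the nonlocality of $(-\Delta_x)^s$ means that $\varphi(x,t)-\varphi(y,t)$ remains significant even when $\ox$ is far from $Q$, and one must carefully balance this slow decay against the growth of $U-c$ and the tail of the kernel $|x-y|^{-(n+2s)}$. Once this pointwise bound is in place, the convolution estimate reduces to a routine parabolic scaling computation.
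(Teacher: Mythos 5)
Your proposal is correct and matches the paper's proof in its essentials: both start from the Leibniz/commutator identity for $\Theta^s$ applied to $\varphi(P_s\ast T - c)$ with $c := P_s\ast T(\ox_Q)$, invoke the $(1,\frac{1}{2s})$-Lipschitz property of $P_s\ast T$ from Theorem \ref{thm3.1.1} to obtain $L^\infty$ bounds of order $\ell(Q)^{1-2s}$ on the commutator-type errors (supported in the time-strip $\mathbb{R}^n\times I_Q$), and then integrate against estimates on $\nabla_x P_s$. The only differences are organizational: the paper groups $(-\Delta)^s\varphi$ with $\partial_t\varphi$ into $(P_s\ast T - c)\Theta^s\varphi$ and separately peels off the bilinear piece $I_s(\varphi, P_s\ast T)$, while you keep $[(-\Delta_x)^s,\varphi]$ intact and single out $\partial_t\varphi$; and where you propose to compensate for the weak kernel bound $|\ox-\oy|_{p_s}^{-(n+1)}$ via the spatial decay $\ell(Q)^n d(\ox)^{-(n+2s-1)}$ of $g$, the paper instead exploits the sharper kernel estimate $|\nabla_x P_s(\ox-\oy)|\lesssim |t-u|\,|\ox-\oy|_{p_s}^{-(n+2s+1)}$ from \cite[Theorem 2.2]{HMP} to absorb the spatial tail with only the uniform bound on $g$—both routes close, though in the far-time regime $\ox\notin\mathbb{R}^n\times 2I_Q$ the dyadic-annulus decomposition the paper uses is less routine than your last sentence suggests.
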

\begin{proof}
	Let us begin by recalling the following product rule that can be deduced directly from the definition of $(-\Delta)^s$ (see \cite[Eq.(4.1)]{RSe}, for example)
	\begin{equation*}
		(-\Delta)^s(fg) = f(-\Delta)^sg+g(-\Delta)^sf-I_s(f,g),
	\end{equation*}
	where $I_s$ is defined as
	\begin{equation*}
		I_s(f,g)\simeq \int_{\mathbb{R}^n}\frac{(f(x)-f(y))(g(x)-g(y))}{|x-y|^{n+2s}}\dd y.
	\end{equation*}
	Then, for some constant $c$ to be fixed later on we have
	\begin{align*}
		\Theta^{s}\big( \varphi(P_s\ast T-c) \big) &= \varphi\Theta^s(P_s\ast T-c)+(P_s\ast T-c)\Theta^s\varphi-I_s(\varphi, P_s\ast T-c)\\
		&=\varphi T+(P_s\ast T-c)\Theta^s\varphi-I_s(\varphi,P_s\ast T),
	\end{align*}
	and therefore
	\begin{align*}
		\nabla_x P_s\ast \varphi T &= \nabla_x\big( \varphi(P_s\ast T -c) \big)-\nabla_x P_s \ast \big( (P_s\ast T-c)\Theta^s\varphi \big)+\nabla_xP_s\ast I_s(\varphi, P_s\ast T)\\
		&=:\text{I}+\text{II}+\text{III}.
	\end{align*}
	To estimate the $L^\infty$ norm of the previous terms write $Q:=Q_1\times I_Q$, where $Q_1\subset \mathbb{R}^n$ is an euclidean $n$-dimensional cube of side $\ell(Q)$, and $I_Q\subset \mathbb{R}$ is an interval of length $\ell(Q)^{2s}$. Choose $c:=P_s\ast T(\ox_Q)$, with $\ox_Q:=(x_Q,t_Q)$ the center of $Q$. This way, since $P_s\ast T$ satisfies a $(1,\frac{1}{2s})$-Lipschitz property, for any $\ox=(x,t)\in Q$ we have 
	\begin{align}
		|P_s\ast T (\ox)&-P_s\ast T (\ox_Q)|\nonumber \\
		&\leq |P_s\ast T (x,t)-P_s\ast T (x_Q,t)|+|P_s\ast T (x_Q,t)-P_s\ast T (x_Q,t_Q)|\lesssim \ell(Q). \label{eq3.2.1}
	\end{align}
	Using this estimate, we are able to bound term $\text{I}$ as follows:
	\begin{align*}
		\|\nabla_x\big( \varphi(P_s\ast T &-c) \big)\|_\infty \\
		& \leq \|\nabla_x\varphi\|_{\infty}\|P_s\ast T-P_s\ast T(\ox_Q)\|_{\infty,Q}+\|\varphi\|_\infty\|\nabla_xP_s\ast T\|_{\infty}\lesssim 1.
	\end{align*}
	Let us move on by studying $\text{II}$. We name $g_1:=(P_s\ast T - P_s\ast T(\ox_Q))\Theta^s\varphi$ and observe that by relation \eqref{eq3.2.1} and the admissibility of $\varphi$ (see \cite[Remark 3.1]{HeMPr}), we have $\|g_1\|_\infty\lesssim \ell(Q)^{-2s+1}$. Let us now proceed by computing,
	\begin{align*}
		|\text{II}(\ox)|&\leq \int_{2Q}|\nabla_x P_s(\ox-\oy)||g_1(\oy)|\dd\oy+\int_{\mathbb{R}^{n+1}\setminus{2Q}}|\nabla_x P_s(\ox-\oy)||g_1(\oy)|\dd\oy\\
		&=:\text{II}_1(\ox)+\text{II}_2(\ox).
	\end{align*}
	Let $R$ be the $s$-parabolic cube centered at $\ox$ with length $\ell(Q)$. We begin by studying $\text{II}_1$. Assume $\ox \in 4Q$ so that by the first estimate of \cite[Theorem 2.2]{HeMPr} we get
	\begin{align*}
		\text{II}_1(\ox)&\lesssim \|g_1\|_{\infty}\int_{2Q}\frac{|x-y||t-u|}{|\ox-\oy|_{p_s}^{n+2s+2}}\dd \oy \lesssim \ell(Q)^{-2s+1}\int_{8R}\frac{\dd y}{|\ox-\oy|_{p_s}^{n+1}}\\
		&\lesssim \ell(Q)^{-2s+1}\bigg( \int_{8R_1}\frac{\dd y}{|x-y|^{n-\varepsilon}} \bigg)\bigg( \int_{8^{2s}I_R}\frac{\dd u}{|t-u|^{\frac{\varepsilon+1}{2s}}} \bigg)\lesssim 1,
	\end{align*}
	where we have chosen $0<\varepsilon<2s-1$. On the other hand, if $\ox\notin 4Q$,
	\begin{equation*}
		\text{II}_1(\ox)\lesssim \ell(Q)^{-2s+1}\int_{2Q}\frac{\dd y}{|\ox-\oy|_{p_s}^{n+1}}\leq \ell(Q)^{-2s+1}\frac{|2Q|}{\ell(Q)^{n+1}}\lesssim 1.
	\end{equation*}
	We move on by studying $\text{II}_2$. Let us first consider the case in which $\ox\in \mathbb{R}^n\times 2I_Q$. Since $\text{supp}(g_1)\subset \mathbb{R}^n\times I_Q$,
	\begin{align*}
		|\text{II}_2(\ox)|&\lesssim \ell(Q)^{-2s+1}\int_{(\mathbb{R}^n\times I_Q)\setminus{2Q}}\frac{|t-u|}{|\ox-\oy|_{p_s}^{n+2s+1}}\dd\oy\\
		&=\ell(Q)^{-2s+1}\int_{[(\mathbb{R}^n\times I_Q)\setminus{2Q}]\setminus{8R}}\frac{|t-u|}{|\ox-\oy|_{p_s}^{n+2s+1}}\dd\oy\\
		&\hspace{5cm}+\ell(Q)^{-2s+1}\int_{[(\mathbb{R}^n\times I_Q)\setminus{2Q}]\cap 8R}\frac{|t-u|}{|\ox-\oy|_{p_s}^{n+2s+1}}\dd\oy\\
		&\lesssim \ell(Q)\int_{\mathbb{R}^{n+1}\setminus{8R}}\frac{\dd\oy}{|\ox-\oy|_{p_s}^{n+2s+1}}+\ell(Q)^{-2s+1}\int_{8R}\frac{\dd\oy}{|\ox-\oy|_{p_s}^{n+1}}\lesssim 1.
	\end{align*}
	Now assume that $\ox\notin \mathbb{R}^{n}\times 2I_Q$. in this case, set $\ell_t:=\text{dist}_{p_s}(\ox,\mathbb{R}^n\times I_q)\gtrsim \ell(Q)$. Then, if $A_j:=Q(\ox,2^j\ell_t)\setminus{Q(\ox,2^{j-1}\ell_t)}$ for $j\geq 0$,
	\begin{align*}
		|\text{II}_2(\ox)|&\lesssim \ell(Q)^{-2s+1}\sum_{j=0}^\infty \int_{A_j\cap [(\mathbb{R}^n\times I_Q)\setminus{2Q}]}\frac{\dd \oy}{|\ox-\oy|_{p_s}^{n+1}}\\
		&\lesssim \ell(Q)^{-2s+1}\sum_{j=0}^\infty \frac{\ell(Q)^{2s}\big[2^j\ell(Q) \big]^n}{\big[ 2^j\ell(Q)\big]^{n+1}}\lesssim 1,
	\end{align*}
	and we are done with $\text{II}$. Finally, we study $\text{III}$, and we notice that if we prove that the function
	\begin{equation*}
		g_2(\ox):=I_s(\varphi,P_s\ast T)(\ox)=c_{n,s}\int_{\mathbb{R}^n}\frac{(\varphi(x,t)-\varphi(y,t))(P_s\ast T(x,t)-P\ast T(y,t))}{|x-y|^{n+2s}}\dd y
	\end{equation*}
	is such that $\|g_2\|_{\infty}\lesssim \ell(Q)^{-2s+1}$ we will be done, since we would be able to repeat the same arguments done for $\text{II}$. To do so, we distinguish two cases: if $\ox\notin 2Q$, then
	\begin{align*}
		|g_2(\ox)|&\lesssim \int_{Q}\frac{|\varphi(y,t)||P_s\ast T(x,t)-P_s\ast T(y,t)|}{|x-y|^{n+2s}}\dd y\\
		&\lesssim \|\varphi\|_{\infty}\int_Q\frac{\|\nabla_xP_s\ast T(\cdot,t)\|_{\infty}}{|x-y|^{n+2s-1}}\dd y \lesssim \frac{\ell(Q)^n}{\ell(Q)^{n+2s-1}}=\ell(Q)^{-2s+1}.
	\end{align*}
	If $\ox\in 2Q$, then $|g_2(\ox)|$ is bounded by
	\begin{align*}
		\int_{4Q}&\frac{|\varphi(x,t)-\varphi(y,t)||P_s\ast T(x,t)-P\ast T(y,t)|}{|x-y|^{n+2s}}\dd y\\
		&\hspace{3.5cm}+\int_{\mathbb{R}^n\setminus{4Q}}\frac{|\varphi(x,t)-\varphi(y,t)||P_s\ast T(x,t)-P\ast T(y,t)|}{|x-y|^{n+2s}}\dd y\\
		&\leq  \int_{Q(x,8\ell(Q))}\frac{\|\nabla_x\varphi\|_\infty \dd y}{|x-y|^{n+2s-2}}+\int_{\mathbb{R}^{n}\setminus{Q(x,\ell(Q)/2)}}\frac{2\|\varphi\|_\infty \dd y}{|x-y|^{n+2s-1}} \lesssim \ell(Q)^{-2s+1},
	\end{align*}
	and we are done.
\end{proof}
The next goal is to obtain an analogous result for the potential $\partial_t^{\frac{1}{2s}}P_s\ast \varphi T$. Our arguments are inspired by those in \cite[\textsection 3]{MPrT}. To this end, we first prove an auxiliary result that generalizes \cite[Lemma 3.5]{MPrT}.
\begin{lem}
	\label{lem3.2.3}
	Let $Q=Q_1\times I_Q\subset \mathbb{R}^{n+1}$ be an $s$-parabolic cube and $g$ a function supported on $\mathbb{R}^n\times I_Q$ such that $\|g\|_{\infty}\lesssim \ell(Q)^{-2s+1}$. Then,
	\begin{equation*}
		\|P_s\ast g\|_{\text{\normalfont{Lip}}_{\frac{1}{2s},t}}\lesssim 1.
	\end{equation*}
\end{lem}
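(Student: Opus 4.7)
The target is the temporal Hölder bound $|P_s \ast g(x,t) - P_s \ast g(x,t')| \lesssim h^{\frac{1}{2s}}$, where $h := |t-t'|$ (WLOG $t > t'$). I would split according to whether $h \geq \ell(Q)^{2s}$ or not. If $h \geq \ell(Q)^{2s}$, the crude estimate $|P_s \ast g(x,t)| \leq \|g\|_\infty\, |I_Q| \lesssim \ell(Q)^{-2s+1}\cdot \ell(Q)^{2s} = \ell(Q)$, which follows from $\int_{\mathbb{R}^n} P_s(\cdot,\tau)\,dy \leq 1$, combined with $\ell(Q) \leq h^{\frac{1}{2s}}$, closes this case via the triangle inequality.

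For the main regime $h < \ell(Q)^{2s}$, I would express the difference as
\[ P_s \ast g(x,t) - P_s \ast g(x,t') = \int_{I_Q} \int_{\mathbb{R}^n} \bigl[ P_s(x-y,t-u) - P_s(x-y,t'-u) \bigr] g(y,u)\, dy\, du \]
and split $I_Q = A \sqcup B$, with $A := I_Q \cap (t-2h,\, t]$ (close to both $t$ and $t'$) and $B := I_Q \setminus A$. On $A$, I would bound each of the two pieces separately using $\int P_s(\cdot,\tau)\, dy \leq 1$; the total contribution is $\lesssim \|g\|_\infty\, |A| \lesssim \ell(Q)^{-2s+1}\, h$, which is $\lesssim h^{\frac{1}{2s}}$ since $h \leq \ell(Q)^{2s}$.

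On $B$, both $t-u$ and $t'-u$ are $\geq h$ and mutually comparable, so the mean value theorem in the time variable gains a factor $h\, |\partial_t P_s(x-y,\xi-u)|$ for some $\xi \in (t',t)$. Inserting the pointwise bound $|\partial_t P_s(\ox)| \lesssim |\ox|_{p_s}^{-(n+2s)}$ from \cite[Theorem 2.2]{HMP}, together with the standard identity $\int_{\mathbb{R}^n} \frac{dy}{|(y,\tau)|_{p_s}^{n+2s}} \approx |\tau|^{-1}$, reduces the $B$-contribution to
\[ h\, \|g\|_\infty \int_B \frac{du}{t-u} \;\lesssim\; h\, \ell(Q)^{-2s+1} \log\!\left(\frac{\ell(Q)^{2s}}{h}\right). \]
Writing $\theta := h/\ell(Q)^{2s} \in (0,1]$, this equals $\ell(Q)\, \theta \log(1/\theta)$, which is dominated by $h^{\frac{1}{2s}} = \ell(Q)\, \theta^{\frac{1}{2s}}$ precisely because $\theta^{\frac{2s-1}{2s}} \log(1/\theta)$ is uniformly bounded on $(0,1]$ when $2s > 1$.

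The main obstacle is this logarithmic loss produced by the tail integral over $B$; it is absorbed exactly through the strictly positive Hölder gap $\frac{2s-1}{2s}$, which is where the hypothesis $s > 1/2$ enters crucially. All remaining steps are routine verifications of kernel integrals that have already been used repeatedly elsewhere in the paper.
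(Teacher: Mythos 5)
Your proof is correct, and it takes a genuinely different route from the paper's. The paper organizes the estimate around an $s$-parabolic space-time cube $R := Q(\ox,\max\{\ell(Q),\ell\})$ (where $\ell^{2s}=|t-t'|$), splitting the convolution into the near part $I_1$ (inside $4R$, bounded via $|P_s(\ow)|\lesssim|\ow|_{p_s}^{-n}$) and the far part $I_2$ (outside $4R$, bounded via the mean-value estimate $|P_s(\cdot,t-u)-P_s(\cdot,t'-u)|\lesssim \ell^{2s}|\ow|_{p_s}^{-(n+2s)}$), with further sub-cases $\ell\geq\ell(Q)$ and $\ell<\ell(Q)$ for $I_1$. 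You instead dispatch the large-increment case $h\geq\ell(Q)^{2s}$ immediately by the crude bound $|P_s\ast g|\lesssim\|g\|_\infty|I_Q|\lesssim\ell(Q)$, and in the main regime decompose the \emph{time interval only}, $I_Q=A\sqcup B$ with $A$ a window of width $2h$ around $t$, integrating out the spatial variable sharply in each piece through $\int_{\mathbb{R}^n}P_s(\cdot,\tau)\,dy=1$ (on $A$) and $\int_{\mathbb{R}^n}|\ow|_{p_s}^{-(n+2s)}\,dy\approx|\tau|^{-1}$ (on $B$). Both proofs rest on the same kernel inputs, but where the paper avoids a logarithm by the sub-multiplicative splitting $|\ow|_{p_s}^{-(n+2s)}\leq|x-z|^{-(n+1)}|t-u|^{-(2s-1)/(2s)}$ (so each one-dimensional integral converges), you accept the logarithm $\int_B\frac{du}{t-u}$ from the sharp spatial integration and absorb it with the strictly positive gap $(2s-1)/(2s)$. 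Both mechanisms fail at $s=1/2$, and you correctly flag this as the crucial entry point of the hypothesis. Your version is arguably more streamlined, avoiding the auxiliary cubes $R,S,R',S'$ and the attendant sub-cases. One small point of care: your bound $\int_B\frac{du}{t-u}\lesssim\log(\ell(Q)^{2s}/h)$ should be read as $\lesssim 1+\log_+(\ell(Q)^{2s}/h)$ (the integral can also be $O(1)$ rather than logarithmic, e.g.\ when $t$ is far above $I_Q$), but this does not affect the absorption step since $\theta^{(2s-1)/(2s)}\bigl(1+\log(1/\theta)\bigr)$ is still uniformly bounded on $(0,1]$.
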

\begin{proof}
	Fix $\ox=(x,t), \widetilde{x}=(x,r)$ as well as a function $g$ with $\|g\|\lesssim \ell(Q)^{-2s+1}$, supported on $\mathbb{R}^{n}\times I_Q$. If $\ell^{2s}:=|t-r|$ and $R$ is the $s$-parabolic cube centered at $\ox$ with side length $\max\{\ell(Q),\ell\}$,
	\begin{align*}
		|P_s&\ast g (\ox)-P_s\ast g(\widetilde{x})|\\
		&\lesssim \frac{1}{\ell(Q)^{2s-1}}\int_{(\mathbb{R}^n\times I_Q)\cap 4R}|P_s(x-z,t-u)-P_s(x-z,r-u)|\dd z \dd u\\
		&\hspace{0.75cm}+\frac{1}{\ell(Q)^{2s-1}}\int_{(\mathbb{R}^n\times I_Q)\setminus 4R}|P_s(x-z,t-u)-P_s(x-z,r-u)|\dd z \dd u =:\text{I}+\text{II}.
	\end{align*}
	Let us first deal with $\text{II}$:
	\begin{align*}
		\text{II}\lesssim \frac{\ell^{2s}}{\ell(Q)^{2s-1}}&\int_{(\mathbb{R}^n\times I_Q)\setminus 4R}\frac{\dd z \dd u}{|(x-z,t-u)|_{p_s}^{n+2s}}\\
		&\leq \frac{\ell^{2s}}{\ell(Q)^{2s-1}}\int_{\mathbb{R}^n\setminus{4R_1}}\frac{\dd z}{|x-z|^{n+1}}\int_{I_Q\setminus{4^{2s}I_R}}\frac{\dd u}{|t-u|^{1-\frac{1}{2s}}}.
	\end{align*}
	For the spatial integral we simply integrate using polar coordinates for example, and for the temporal integral we use that in $I_Q\setminus{4^{2s}I_R}$, $|t-u|\geq \max\{\ell(Q),\ell\}^{2s}$, and then it can be bounded by
	\begin{align*}
		\frac{\ell^{2s}}{\ell(Q)^{2s-1}}\cdot\frac{1}{\max\{\ell(Q),\ell\}}\cdot\frac{\ell(Q)^{2s}}{\max\{\ell(Q),\ell\}^{2s-1}} \leq \frac{\ell^{2s}\ell(Q)}{\ell(Q)\ell^{2s-1}}=|t-r|^{\frac{1}{2s}}.
	\end{align*}
	Regarding $\text{I}$, let $S$ be the $s$-parabolic cube centered at $\widetilde{x}$ with side length $\max\{\ell(Q),\ell\}$, so that $4R\subset 8S$. If we assume $\ell\geq \ell(Q)$,
	\begin{align*}
		\text{I}\leq \frac{1}{\ell(Q)^{2s-1}}\bigg[ \int_{(\mathbb{R}^n\times I_Q)\cap 4R}\frac{\dd z \dd u}{|(x-z,t-u)|_{p_s}^{n}} + \int_{(\mathbb{R}^n\times I_Q)\cap 8S}\frac{\dd z \dd u}{|(x-z,r-u)|_{p_s}^{n}} \bigg].
	\end{align*}
	We study the first integral, the second can be estimated analogously. We compute,
	\begin{align*}
		\int_{(\mathbb{R}^n\times I_Q)\cap 4R}\frac{\dd z \dd u}{|(x-z,t-u)|_{p_s}^{n+2s}}&\leq \int_{4R_1}\frac{\dd z}{|x-z|^{n-1}}\int_{I_Q\cap4^{2s}I_R}\frac{\dd u}{|t-u|^{\frac{1}{2s}}}\\
		&\lesssim \max\{\ell(Q),\ell\}[\ell(Q)^{2s}]^{1-\frac{1}{2s}}=\ell \cdot \ell(Q)^{2s-1},
	\end{align*}
	and with this we conclude $\text{I}\lesssim \ell$. If $\ell\leq \ell(Q)$, denote $R'$ and $S'$ be $s$-parabolic cubes of side length $\ell$ centered at $\ox$ and $\widetilde{x}$ respectively, so that
	\begin{align*}
		\text{I}\leq \frac{1}{\ell(Q)^{2s-1}}&\bigg[ \int_{(\mathbb{R}^n\times I_Q)\cap 2R'}\frac{\dd z \dd u}{|(x-z,t-u)|_{p_s}^{n}} + \int_{(\mathbb{R}^n\times I_Q)\cap 4S'}\frac{\dd z \dd u}{|(x-z,r-u)|_{p_s}^{n}} \bigg]\\
		&+\frac{1}{\ell(Q)^{2s-1}}\int_{(\mathbb{R}^n\times I_Q)\cap (4R\setminus 2R')}|P_s(x-z,t-u)-P_s(x-z,r-u)|\dd z \dd u.
	\end{align*}
	The first two summands satisfy being controlled by above by $\ell$ by an analogous argument to that given for the case $\ell\geq \ell(Q)$. The last term it can be estimated as follows:
	\begin{align*}
		\frac{\ell^{2s}}{\ell(Q)^{2s-1}}&\int_{(\mathbb{R}^n\times I_Q)\cap (4R\setminus 2R')}\frac{\dd z \dd u}{|(x-z,t-u)|_{p_s}^{n+2s}}\\
		&\leq\frac{\ell^{2s}}{\ell(Q)^{2s-1}}\int_{\mathbb{R}^n\setminus{2R_1'}}\frac{\dd z}{|x-z|^{n+2s-1}}\int_{I_Q\cap(4^{2s}I_R\setminus{2^2s}I_{R'})}\frac{\dd u}{|t-u|^{\frac{1}{2s}}}\\
		&\lesssim \frac{\ell}{\ell(Q)^{2s-1}}\int_{4^{2s}I_R}\frac{\dd u}{|t-u|^{\frac{1}{2s}}}\lesssim \frac{\ell}{\ell(Q)^{2s-1}}  [\ell(Q)^{2s}]^{1-\frac{1}{2s}} = |t-r|^{\frac{1}{2s}},
	\end{align*}
	and we are done.
\end{proof}
The previous lemma allows us to prove a weaker localization-type result, where we ask for potentials to satisfy, explicitly, a $(1,\frac{1}{2s})$-Lipschitz property, instead of asking for a $\text{BMO}_{p_s}$ estimate over its $\partial_{t}^{\frac{1}{2s}}$ derivative.
\begin{lem}
	\label{lem3.2.4}
	Let $Q\subset \mathbb{R}^{n+1}$ be an $s$-parabolic cube and $\varphi$ admissible for $Q$. Let $T$ be a distribution with $\|\nabla_x P_s\ast T\|_\infty \leq 1$ and $\|P_s\ast T\|_{\text{\normalfont{Lip}}_{\frac{1}{2s},t}}\leq 1$. Then
	\begin{equation*}
		\|P_s\ast \varphi T\|_{\text{\normalfont{Lip}}_{\frac{1}{2s},t}}\lesssim 1.
	\end{equation*}
\end{lem}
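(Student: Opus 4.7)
The plan is to mimic the structure of the proof of Lemma \ref{lem3.2.2}. First I would apply the fractional product rule
\begin{equation*}
\Theta^s(\varphi\,h) = \varphi\,\Theta^s h + h\,\Theta^s\varphi - I_s(\varphi,h)
\end{equation*}
with $h := P_s\ast T - c$ and $c := P_s\ast T(\ox_Q)$, and then convolve with the fundamental solution $P_s$ to obtain the decomposition
\begin{equation*}
P_s \ast \varphi T = \varphi(P_s\ast T - c) - P_s\ast g_1 + P_s\ast g_2 =: A_1 + A_2 + A_3,
\end{equation*}
where $g_1 := (P_s\ast T - c)\,\Theta^s\varphi$ and $g_2 := I_s(\varphi,P_s\ast T)$ are exactly the objects from Lemma \ref{lem3.2.2}. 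The arguments there already produce $\|g_i\|_\infty \lesssim \ell(Q)^{-2s+1}$ for $i=1,2$. Moreover, each $g_i$ vanishes whenever $t\notin I_Q$, because for such $t$ the slice $\varphi(\cdot,t)$ is identically zero; hence $\text{supp}(g_i)\subset \mathbb{R}^n\times I_Q$.

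For $A_2$ and $A_3$ the work is already packaged in Lemma \ref{lem3.2.3}: applying it to $g_1$ and $g_2$ immediately yields $\|A_2\|_{\text{\normalfont{Lip}}_{\frac{1}{2s},t}} + \|A_3\|_{\text{\normalfont{Lip}}_{\frac{1}{2s},t}} \lesssim 1$, which is the principal reason for having set up that auxiliary lemma in advance. Only $A_1$ remains to be handled by direct estimation.

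For $A_1$, fix $x\in \mathbb{R}^n$ and $t\neq r$, set $\ell := |t-r|^{1/(2s)}$, and distinguish cases. If $\ell\geq \ell(Q)$, the estimate \eqref{eq3.2.1} already gives $|A_1|\lesssim \ell(Q)\leq \ell$ pointwise on $Q$ and $A_1\equiv 0$ off $Q$, so $|A_1(x,t)-A_1(x,r)|\lesssim \ell$ trivially. If $\ell<\ell(Q)$ and both $(x,t),(x,r)\in Q$, use the splitting
\begin{equation*}
A_1(x,t)-A_1(x,r) = \varphi(x,t)\bigl[P_s\ast T(x,t) - P_s\ast T(x,r)\bigr] + \bigl[P_s\ast T(x,r)-c\bigr]\bigl[\varphi(x,t)-\varphi(x,r)\bigr];
\end{equation*}
the first bracket is at most $\|P_s\ast T\|_{\text{\normalfont{Lip}}_{\frac{1}{2s},t}}\,\ell\leq \ell$, while the second is controlled by $\ell(Q)\cdot \|\partial_t\varphi\|_\infty\,\ell^{2s}\lesssim \ell(Q)^{1-2s}\ell^{2s}\leq \ell$ (using $s>1/2$ and $\ell\leq \ell(Q)$). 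The delicate subcase is $\ell<\ell(Q)$ with $(x,t)\in Q$ but $(x,r)\notin Q$: then $A_1(x,r)=0$, and since $\varphi\in \pazocal{C}^\infty$ vanishes on $\partial Q$ and the point of $\partial I_Q$ lying between $t$ and $r$ is within distance $|t-r|$ of $t$, the admissibility bound $\|\partial_t\varphi\|_\infty\lesssim \ell(Q)^{-2s}$ forces $|\varphi(x,t)|\lesssim \ell(Q)^{-2s}\ell^{2s}$, so that $|A_1(x,t)|\lesssim \ell(Q)^{1-2s}\ell^{2s}\leq \ell$.

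The main obstacle is precisely this boundary subcase, where the trivial bound $\|\varphi\|_\infty\leq 1$ is insufficient and one must instead exploit the $\pazocal{C}^\infty$ decay of $\varphi$ near $\partial Q$ in the time direction; the remaining pieces reduce cleanly either to the already-established estimate \eqref{eq3.2.1} for $A_1$ or to Lemma \ref{lem3.2.3} for $A_2, A_3$.
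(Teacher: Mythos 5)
Your proposal is correct and follows essentially the same strategy as the paper: apply the fractional product rule with $c = P_s\ast T(\ox_Q)$, push the $g_1, g_2$ terms through Lemma~\ref{lem3.2.3}, and handle $A_1 = \varphi(P_s\ast T - c)$ directly using \eqref{eq3.2.1}. The only cosmetic difference is in the boundary subcase of $A_1$: the paper replaces the outside point $(x,r)$ by an auxiliary point $\widetilde{x}'\in 2Q\setminus Q$ and runs a single telescoping estimate, while you compare $\varphi(x,t)$ to the intermediate boundary value $\varphi(x,r'')=0$ with $r''\in\partial I_Q$ — this is the same observation, namely that $\varphi$ decays at rate $\ell(Q)^{-2s}$ in $t$ near $\partial Q$, packaged slightly differently.
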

\begin{proof}
	We already know from the proof of Lemma \ref{lem3.2.2} that the following identity holds:
	\begin{equation*}
		\Theta^{s}\big( \varphi(P_s\ast T-c) \big) =\varphi T+(P_s\ast T-c)\Theta^s\varphi-I_s(\varphi,P_s\ast T),
	\end{equation*}
	and then
	\begin{align*}
		P_s\ast \varphi T = \varphi (P_s\ast T - c)-P_s\ast\big( (P_s\ast T-c)\Theta^s\varphi \big) + P_s\ast I_s(\varphi, P_s\ast T).
	\end{align*}
	Set $\ox=(x,t), \widetilde{x}=(x,r)$. Then,
	\begin{align}
		P_s\ast \varphi T(\ox) &- P_s\ast \varphi T(\widetilde{{x}})\nonumber \\
		&=\varphi(\ox)(P_s\ast T(\ox)-c)-\varphi(\widetilde{x})(P_s\ast T(\widetilde{x})-c)\label{eq3.2.2}\\
		&\hspace{1cm}+P_s\ast I_s(\varphi, P_s\ast T)(\ox)-P_s\ast I_s(\varphi, P_s\ast T)(\widetilde{x})\label{eq3.2.3}\\
		&\hspace{1cm}-P_s\ast\big( (P_s\ast T-c)\Theta^s\varphi \big)(\ox)+P_s\ast\big( (P_s\ast T-c)\Theta^s\varphi \big)(\widetilde{x}).\label{eq3.2.4}
	\end{align}
	We study \eqref{eq3.2.2} as follows: if $\ox,\widetilde{x}\notin Q$ we have that the previous difference is null. If $\ox\in Q$, choosing $c:=P_s\ast T(\ox_Q)$ with $\ox_Q$ the center of $Q$, we define $\widetilde{x}'$ as follows:
	\begin{enumerate}
		\item[\textit{i}.] if $\widetilde{x}\in Q$, $\widetilde{x}':=\widetilde{x}$,
		\item[\textit{ii}.] if $\widetilde{x}\notin Q$, consider $\widetilde{x}':=(x,r')\in 2Q\setminus{Q}$ with the property $|\widetilde{x}'-\ox|_{p_s}\leq |\widetilde{x}-\ox|_{p_s}$. 
	\end{enumerate}
	Either way we get $\varphi(\widetilde{x})(P_s\ast T(\widetilde{x})-c)= \varphi(\widetilde{x}')(P_s\ast T(\widetilde{x}')-c)$. So by \eqref{eq3.2.1} and the $\frac{1}{2s}$-Lipschitz property with respect to $t$ of $P_s\ast T$ to obtain,
	\begin{align*}
		\big\rvert \varphi(\ox)(P_s\ast T(\ox)-c)&-\varphi(\widetilde{x})(P_s\ast T(\widetilde{x})-c) \big\rvert \\
		&\leq |\varphi(\widetilde{x}')-\varphi(\ox)||P_s\ast T (\widetilde{x}')-c|+|\varphi(\ox)||P_s\ast T (\widetilde{x}')-P_s\ast T(\ox)|\\
		&\lesssim \frac{|r'-t|}{\ell(Q)^{2s}}\ell(Q)+|r'-t|^{\frac{1}{2s}}=\bigg[ \frac{|r'-t|^{\frac{2s-1}{2s}}}{\ell(Q)^{2s-1}}+1 \bigg]|r'-t|^{\frac{1}{2s}}\\
		&\lesssim |r'-t|^{\frac{1}{2s}}\leq |r-t|^{\frac{1}{2s}}.
	\end{align*}
	In order to estimate the remaining differences \eqref{eq3.2.3} and $\eqref{eq3.2.4}$ we name $g_1:=(P_s\ast T-c)\Theta^s\varphi$ and $g_2:=I_s(\varphi,P_s\ast T)$. Such functions have already appeared in the proof of Lemma \ref{lem3.2.2} and satisfy $\text{supp}(g_j)\subset \mathbb{R}^{n}\times I_Q$ and $\|g_j\|_\infty \lesssim \ell(Q)^{-2s+1}$, for $j=1,2$. By a direct application of Lemma \ref{lem3.2.3} we deduce that both differences are bounded by $|r-t|^{\frac{1}{2s}}$ up to a constant, and we are done.
\end{proof}
We move on by proving two additional auxiliary lemmas which will finally allow us to deduced the desired localization theorem.

\begin{lem}
	\label{lem3.2.5}
	Let $T$ be a distribution in $\mathbb{R}^{n+1}$ such that $\|\nabla_xP_s\ast T\|_\infty \leq 1$ and $\|\partial_t^{\frac{1}{2s}}P_s\ast T\|_{\ast,p_s}\leq 1$. Let $Q,R\subset \mathbb{R}^{n+1}$ be $s$-parabolic cubes such that $Q\subset R$. Then, if $\varphi$ is admissible for $Q$,
	\begin{equation*}
		\int_R \big\rvert \partial_t^{\frac{1}{2s}}P_s\ast \varphi T (\ox) \big\rvert \dd \ox \lesssim \ell(R)^{n+2s}.
	\end{equation*}
\end{lem}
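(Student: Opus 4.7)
The strategy is to exploit the decomposition of $P_s\ast\varphi T$ developed in the proof of Lemma \ref{lem3.2.2}. Setting $c:=P_s\ast T(\ox_Q)$, $g_1:=(P_s\ast T-c)\Theta^s\varphi$ and $g_2:=I_s(\varphi,P_s\ast T)$, both supported in $\mathbb{R}^n\times I_Q$ and satisfying $\|g_j\|_\infty\lesssim\ell(Q)^{-2s+1}$, one has
\begin{equation*}
    P_s\ast\varphi T=\varphi(P_s\ast T-c)-P_s\ast g_1+P_s\ast g_2.
\end{equation*}
Applying $\partial_t^{\frac{1}{2s}}$ distributively reduces the claim to estimating the $L^1(R)$-norm of three summands.

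For the summands $\partial_t^{\frac{1}{2s}}P_s\ast g_j$, write $K:=\partial_t^{\frac{1}{2s}}P_s$. Since $P_s$ is $s$-parabolically homogeneous of degree $-n$ and $\partial_t^{\frac{1}{2s}}$ carries $s$-parabolic degree $-1$, the kernel $K$ is $s$-parabolically homogeneous of degree $-n-1$; the pointwise bound $|K(\ox)|\lesssim|\ox|_{p_s}^{-(n+1)}$ follows from an adaptation of the estimates of \cite[Theorem 2.2]{HMP}. A Fubini computation then yields
\begin{equation*}
    \int_R |K\ast g_j(\ox)|\dd\ox\lesssim \|g_j\|_\infty\int_R\int_{\mathbb{R}^n\times I_Q}\frac{\dd\oy\,\dd\ox}{|\ox-\oy|_{p_s}^{n+1}}.
\end{equation*}
Carrying out the inner integration first in $y\in\mathbb{R}^n$ (which yields a factor $|t-u|^{-1/(2s)}$) and then in $u\in I_Q$ (of length $\ell(Q)^{2s}$), and finally integrating in $\ox\in R$ using $\ell(Q)\leq\ell(R)$, the resulting expression is bounded by $\ell(Q)\cdot\ell(R)^{n+2s-1}\leq\ell(R)^{n+2s}$.

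For the remaining summand $\partial_t^{\frac{1}{2s}}[\varphi(P_s\ast T-c)]$, I plan to use the fractional product formula
\begin{equation*}
    \partial_t^{\frac{1}{2s}}(\varphi f)=\varphi\,\partial_t^{\frac{1}{2s}}f+f\,\partial_t^{\frac{1}{2s}}\varphi+\Psi,\quad \Psi(x,t):=\int_\mathbb{R}\frac{[\varphi(x,\tau)-\varphi(x,t)][f(x,\tau)-f(x,t)]}{|\tau-t|^{1+\frac{1}{2s}}}\dd\tau,
\end{equation*}
with $f:=P_s\ast T-c$. The pointwise bound $\|\partial_t^{\frac{1}{2s}}\varphi\|_\infty\lesssim\ell(Q)^{-1}$, obtained by splitting the defining integral at the scale $|\tau-t|\approx\ell(Q)^{2s}$ and using the admissibility of $\varphi$, combined with the $(1,\frac{1}{2s})$-Lipschitz property of $f$ afforded by Theorem \ref{thm3.1.1} and \eqref{eq3.2.1}, handles the middle summand. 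An analogous two-scale split for $\Psi$, exploiting the compact support of $\varphi$ in $I_Q$ to control the contribution of large $|\tau-t|$, yields pointwise estimates giving $\int_R|\Psi|\lesssim\ell(R)^{n+2s}$.

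The principal obstacle is the leading summand $\varphi\,\partial_t^{\frac{1}{2s}}(P_s\ast T)$, which is supported in $Q\subset R$, so that $\int_R|\varphi\,\partial_t^{\frac{1}{2s}}(P_s\ast T)|\leq\int_Q|\partial_t^{\frac{1}{2s}}(P_s\ast T)|$. Since the hypothesis only supplies a $\text{BMO}_{p_s}$ control, I anticipate resolving this by writing $\partial_t^{\frac{1}{2s}}(P_s\ast T)=[\partial_t^{\frac{1}{2s}}(P_s\ast T)-m_R]+m_R$ for $m_R$ the mean over $R$; the oscillation piece contributes $\lesssim|R|$ by BMO, while the constant $m_R$ must be absorbed into the other two summands via a compatible modification of $c$ in the original decomposition, so that the ambiguous constant cancels globally.
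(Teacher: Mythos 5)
Your overall decomposition coincides with the paper's, but two steps are problematic, and the second one is a genuine gap in the core of the argument.

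First, the pointwise kernel bound $|\partial_t^{1/(2s)}P_s(\ox)|\lesssim|\ox|_{p_s}^{-(n+1)}$ is false for $n\geq 2$. Since $P_s(0,\tau)\simeq\tau^{-n/(2s)}$ as $\tau\to 0^+$, the integral defining $\partial_t^{1/(2s)}P_s(0,1)$ diverges whenever $n/(2s)\geq 1$, so $\partial_t^{1/(2s)}P_s$ is unbounded on the $s$-parabolic unit sphere. The correct estimate, used in the paper via \cite[Lemma 2.4]{HMP}, carries an extra spatial singularity, $|\partial_t^{1/(2s)}P_s(\ox)|\lesssim|x|^{-(n-2s)}|\ox|_{p_s}^{-(2s+1)}$, and your Fubini computation must be redone with it (the conclusion survives, and in fact the paper obtains the stronger statement $\|\partial_t^{1/(2s)}P_s\ast g_j\|_\infty\lesssim 1$, which it then integrates over $R$).

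Second, and more seriously, your treatment of the $\varphi\,\partial_t^{1/(2s)}(P_s\ast T)$ and $\Psi$ terms misses the key cancellation. The cross term $\Psi$ \emph{cannot} be estimated on its own: splitting at scale $\ell(Q)^{2s}$, the near piece is $O(1)$, but the tail
$\varphi(x,t)\int_{|\tau-t|>\ell(Q)^{2s}}\tfrac{P_s\ast T(x,\tau)-P_s\ast T(x,t)}{|\tau-t|^{1+1/(2s)}}\,\dd\tau$
diverges logarithmically once you apply the $\text{Lip}_{1/(2s),t}$ bound (you are integrating $\dd\rho/\rho$ on $(\ell(Q)^{2s},\infty)$). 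This tail must be paired with the constant subtracted from $\partial_t^{1/(2s)}(P_s\ast T)$. Your plan to "absorb $m_R$ by modifying $c$" cannot produce that cancellation: since $P_s\ast\Theta^s\varphi=\varphi$, the decomposition $P_s\ast\varphi T=\varphi(P_s\ast T-c)-P_s\ast[(P_s\ast T-c)\Theta^s\varphi]+P_s\ast I_s(\varphi,P_s\ast T)$ is independent of $c$, and changing $c$ leaves $\varphi\,\partial_t^{1/(2s)}(P_s\ast T)$ and $\Psi$ untouched (only $g_1$ and the $f\,\partial_t^{1/(2s)}\varphi$ term are redistributed). What the paper actually does: it subtracts a weighted mean $m_{\psi_Q}(\partial_t^{1/(2s)}P_s\ast T)$ with $\psi_Q$ a smooth bump adapted to $2Q$, writes $\partial_t^{1/(2s)}P_s\ast T=F_1+F_2$ split at scale $\ell(Q)^{2s}$, exploits the antisymmetry of the truncated kernel to show $|m_{\psi_Q}F_1|\lesssim 1$, and proves $|F_2(\oy)-F_2(\ox)|\lesssim 1$ for $\ox\in Q$, $\oy\in 2Q$ using the $(1,\tfrac{1}{2s})$-Lipschitz property, yielding precisely the estimate $|m_{\psi_Q}(\partial_t^{1/(2s)}P_s\ast T)-(\text{tail of }\Psi)|\lesssim 1$ that your proposal needs but does not establish. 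Also note that $A:=(P_s\ast T-c)\,\partial_t^{1/(2s)}\varphi$ is supported on $Q_1\times\mathbb{R}$, not on $Q$, so the crude bound $\|\partial_t^{1/(2s)}\varphi\|_\infty\lesssim\ell(Q)^{-1}$ is not enough when $\ell(R)\gg\ell(Q)$; the decay $|\partial_t^{1/(2s)}\varphi(x,t)|\lesssim\ell(Q)^{2s}/(\ell(Q)^{2s+1}+|t-t_Q|^{1+1/(2s)})$ is needed.
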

\begin{proof}
	We know that
	\begin{align*}
		P_s\ast \varphi T = \varphi (P_s\ast T - c)-P_s\ast\big( (P_s\ast T-c)\Theta^s\varphi \big) + P_s\ast I_s(\varphi, P_s\ast T),
	\end{align*}
	and we choose $c:=P_s\ast T(\ox_Q)$, with $\ox_Q$ the center of $Q$. Name $g_1:=(P_s\ast T-c)\Theta^s\varphi$ and $g_2:=I_s(\varphi,P_s\ast T)$ so that
	\begin{equation}
		\label{eq3.2.5}
		\partial_t^{\frac{1}{2s}}P_s\ast \varphi T = \partial_t^{\frac{1}{2s}}\big[ \varphi(P_s\ast T - c) \big] - \partial_t^{\frac{1}{2s}}P_s\ast g_1 + \partial_t^{\frac{1}{2s}}P_s\ast g_2.
	\end{equation}
	Begin by noticing that $\forall \ox\in\mathbb{R}^{n+1}$ and $i=1,2$, if $Q_{\ox}=Q_{1,\ox}\times I_{Q,\ox}$ is the $s$-parabolic cube of side length $\ell(Q)$ and center $\ox$,
	\begin{align*}
		\big\rvert \partial_t^{\frac{1}{2s}}&P_s\ast g_i(\ox)\big\rvert \lesssim \ell(Q)^{-2s+1}\int_{\mathbb{R}^n\times I_Q}\big\rvert \partial_t^{\frac{1}{2s}}P_s(\ox-\oy)\big\rvert\dd \oy\\
		&= \ell(Q)^{-2s+1}\bigg[ \int_{(\mathbb{R}^n\times I_Q)\cap 4Q_{\ox}}\big\rvert \partial_t^{\frac{1}{2s}}P_s(\ox-\oy)\big\rvert\dd \oy + \int_{(\mathbb{R}^n\times I_Q)\setminus 4Q_{\ox}}\big\rvert \partial_t^{\frac{1}{2s}}P_s(\ox-\oy)\big\rvert\dd \oy \bigg].
	\end{align*}
	Regarding the first integral, assuming $n>1$ and applying \cite[Theorem 2.4]{HeMPr} with $\beta:=\frac{1}{2s}$, we have
	\begin{align*}
		\int_{(\mathbb{R}^n\times I_Q)\cap 4Q_{\ox}}\big\rvert \partial_t^{\frac{1}{2s}}P_s(\ox-\oy)\big\rvert\dd \oy &\lesssim \int_{(\mathbb{R}^n\times I_Q)\cap 4Q_{\ox}}\frac{\dd \oy}{|x-y|^{n-2s}|\ox-\oy|_{p_s}^{2s+1}}\\
		& \leq \int_{4Q_{1,\ox}}\frac{\dd y}{|x-y|^{n-2s+\varepsilon}}\int_{4^{2s}I_{Q,\ox}}\frac{\dd r}{|t-r|^{\frac{2s+1-\varepsilon}{2s}}}\\
		&\lesssim \ell(Q)^{2s-\varepsilon}\ell(Q)^{-1+\varepsilon}=\ell(Q)^{2s-1},
	\end{align*}
	where we have chosen $1<\varepsilon<2s$. If $n=1$, by \cite[Theorem 2.4]{HeMPr} we also know that for any $\alpha\in(2s-1,4s)$,
	\begin{align*}
		\int_{(\mathbb{R}^n\times I_Q)\cap 4Q_{\ox}}\big\rvert \partial_t^{\frac{1}{2s}}P_s(\ox-\oy)\big\rvert\dd \oy &\lesssim \int_{(\mathbb{R}^n\times I_Q)\cap 4Q_{\ox}}\frac{\dd \oy}{|x-y|^{1-2s+\alpha}|\ox-\oy|_{p_s}^{2s+1-\alpha}}.
	\end{align*}
	So choosing $1-\alpha<\varepsilon<2s-\alpha$ we can carry out a similar argument and deduce the desired bound. For the second integral we also distinguish whether if $n>1$ or $n=1$ (we will only give the details for the case $n>1$). Defining $A_j:=\{\oy\,:\, 2^j\ell(Q)\leq \text{dist}_{p_s}(\ox,\oy)\leq 2^{j+1}\ell(Q)\}$ for $j\geq 1$, we have
	\begin{align*}
		\int_{(\mathbb{R}^n\times I_Q)\setminus 4Q_{\ox}}&\big\rvert \partial_t^{\frac{1}{2s}}P_s(\ox-\oy)\big\rvert\dd \oy \lesssim \sum_{j=1}^{\infty} \int_{(\mathbb{R}^n\times I_Q)\cap A_j} \frac{\dd \oy}{|x-y|^{n-2s}|\ox-\oy|_{p_s}^{2s+1}}\\
		&\hspace{-0.75cm}\leq \ell(Q)^{2s} \sum_{j=1}^{\infty} \int_{(\mathbb{R}^n\times I_Q)\cap A_j} \frac{\dd y}{|x-y|^{n+1}}\lesssim \ell(Q)^{2s} \sum_{j=1}^{\infty} \frac{(2^j\ell(Q))^n}{(2^j\ell(Q))^{n+1}} = \ell(Q)^{2s-1}.
	\end{align*}
	Therefore we obtain
	\begin{equation*}
		\big\rvert \partial_t^{\frac{1}{2s}}P_s\ast g_i(\ox)\big\rvert\lesssim 1, \qquad \forall \ox \in \mathbb{R}^{n+1} \,\text{ and }\, i=1,2.
	\end{equation*}
	Now, returning to \eqref{eq3.2.5} and integrating both sides over $R$, we get
	\begin{equation*}
		\int_R\big\rvert\partial_t^{\frac{1}{2s}}P_s\ast \varphi T (\ox)\big\rvert \dd \ox = \int_R\big\rvert\partial_t^{\frac{1}{2s}}\big[ \varphi(P_s\ast T - c) \big](\ox)\big\rvert \dd \ox +\ell(R)^{n+2s}.
	\end{equation*}
	So we are left to study the integral
	\begin{equation*}
		\int_R\big\rvert\partial_t^{\frac{1}{2s}}\big[ \varphi\big(P_s\ast T - P_s\ast T(\ox_Q)\big) \big](\ox)\big\rvert \dd \ox.
	\end{equation*}
	To this end, take $\psi_Q$ test function with $\chi_Q\leq \psi_Q \leq \chi_{2Q}$, with $\|\nabla_x \psi_Q\|_{\infty}\leq \ell(Q)^{-1}$, $\|\partial_t \psi_Q\|_\infty\leq \ell(Q)^{-2s}$ and $\|\Delta \psi_Q\|_\infty\leq \ell(Q)^{-2}$. We also write for locally integrable function $F$,
	\begin{equation*}
		m_{\psi_Q}(F):=\frac{\int F \psi_Q}{\int \psi_Q}.
	\end{equation*}
	Using the product rule we also know that
	\begin{align*}
		\partial_t^{\frac{1}{2s}}(fg)(t)=g\partial_t^{\frac{1}{2s}}f(t)+f\partial_t^{\frac{1}{2s}}g(t)+c_{s}\int_{\mathbb{R}}\frac{\big( f(r)-f(t) \big)\big( g(r)-g(t) \big)}{|r-t|^{1+\frac{1}{2s}}}\dd r.
	\end{align*}
	So choosing $f:=\varphi(x,\cdot)$ and $g:=P_s\ast T(x,\cdot)-P_s\ast T(\ox_Q)$,
	\begin{align*}
		\big\rvert\partial_t^{\frac{1}{2s}}\big[ \varphi\big(P_s&\ast T - P_s\ast T(\ox_Q)\big) \big](x,t)\big\rvert\\
		&=\big( P_s\ast T (x,t) - P_s\ast T (\ox_Q) \big)\partial_t^{\frac{1}{2s}}\varphi (x,t)+\varphi(x,t)\partial_t^{\frac{1}{2s}}P_s\ast T(x,t)\\
		&\hspace{2cm}+\int_{\mathbb{R}}\frac{\big( \varphi(x,r)-\varphi(x,t) \big)\big( P_s\ast T(x,r)-P_s\ast T(x,t) \big)}{|r-t|^{1+\frac{1}{2s}}}\dd r\\
		&=\big( P_s\ast T (x,t) - P_s\ast T (\ox_Q) \big)\partial_t^{\frac{1}{2s}}\varphi (x,t)\\
		&\hspace{2cm}+\varphi(x,t)\big( \partial_t^{\frac{1}{2s}}P_s\ast T(x,t)-m_{\psi_Q}\big( \partial_t^{\frac{1}{2s}}P_s\ast T \big) \big)\\
		&\hspace{2cm}+\int_{\mathbb{R}}\frac{\big( \varphi(x,r)-\varphi(x,t) \big)\big( P_s\ast T(x,r)-P_s\ast T(x,t) \big)}{|r-t|^{1+\frac{1}{2s}}}\dd r\\
		&\hspace{2cm} +\varphi(x,t)m_{\psi_Q}\big( \partial_t^{\frac{1}{2s}}P_s\ast T \big)=: \text{I}(\ox)+\text{II}(\ox)+\text{III}(\ox)+\text{IV}(\ox).
	\end{align*}
	To study $\text{I}$, observe that $\text{supp}(\partial_t^{\frac{1}{2s}}\varphi)\subset Q_1\times \mathbb{R}$. If $(x,t)\in Q_1\times 2^{2s}I_Q$ we have
	\begin{align*}
		|\partial_t^{\frac{1}{2s}}\varphi(x,t)|&\leq \int_{\mathbb{R}}\frac{|\varphi(x,r)-\varphi(x,t)|}{|r-t|^{1+\frac{1}{2s}}}\dd r\\
		&\lesssim \frac{1}{\ell(Q)^{2s}}\int_{4^{2s}I_Q}\frac{|r-t|}{|r-t|^{1+\frac{1}{2s}}}\dd r+\int_{\mathbb{R}\setminus{4^{2s}I_Q}}\frac{\dd r}{|r-t|^{1+\frac{1}{2s}}}\lesssim \frac{1}{\ell(Q)}.
	\end{align*}
	If on the other hand $(x,t)\notin Q_1\times 2^{2s}I_Q$,
	\begin{align*}
		|\partial_t^{\frac{1}{2s}}\varphi(x,t)|\leq \int_{I_Q}\frac{|\varphi(x,r)|}{|r-t|^{1+\frac{1}{2s}}}\dd r \lesssim \frac{\ell(Q)^{2s}}{|t-t_Q|^{1+\frac{1}{2s}}}.
	\end{align*}
	So in any case we are able to deduce
	\begin{align*}
		|\partial_t^{\frac{1}{2s}}\varphi(x,t)|\lesssim \frac{\ell(Q)^{2s}}{\ell(Q)^{2s+1}+|t-t_Q|^{1+\frac{1}{2s}}}, \qquad \forall \ox\in Q_1\times \mathbb{R}.
	\end{align*}
	Then, we infer that for any $x\in Q_1$, by the $(1,\frac{1}{2s})$-Lipschitz property of $P_s\ast T$,
	\begin{equation*}
		|\text{I}(\ox)|\lesssim \frac{\ell(Q)^{2s}\big( \ell(Q)+|t-t_Q|^{\frac{1}{2s}} \big)}{\ell(Q)^{2s+1}+|t-t_Q|^{1+\frac{1}{2s}}},
	\end{equation*}
	and therefore
	\begin{align*}
		\int_{R}|\text{I}(\ox)|\dd\ox &\lesssim \int_{Q_1}\int_{|t-t_Q|^{\frac{1}{2s}}\leq 2\ell(R)}\frac{\ell(Q)^{2s}\big( \ell(Q)+|t-t_Q|^{\frac{1}{2s}}\big)}{\ell(Q)^{2s+1}+|t-t_Q|^{1+\frac{1}{2s}}}\dd t \dd x\\
		&=\ell(Q)^{n+2s}\int_{|u|\leq \big(2\frac{\ell(R)}{\ell(Q)}\big)^{2s}}\frac{1+|u|^{\frac{1}{2s}}}{1+|u|^{1+\frac{1}{2s}}}\dd u\\
		&\lesssim \ell(Q)^{n+2s}\bigg( 1+ \int_{1\leq |u| \leq \big(2\frac{\ell(R)}{\ell(Q)}\big)^{2s} } \frac{1+|u|^{\frac{1}{2s}}}{1+|u|^{1+\frac{1}{2s}}}\dd u \bigg)\\
		&\lesssim \ell(Q)^{n+2s}\bigg( 1+\bigg(\frac{\ell(R)}{\ell(Q)}\bigg)^{2s}+\log\frac{\ell(R)}{\ell(Q)} \bigg)\\
		&\lesssim \ell(R)^{n+2s}\bigg( 1+\bigg(\frac{\ell(Q)}{\ell(R)}\bigg)^{2s}\log\frac{\ell(R)}{\ell(Q)} \bigg)\lesssim \ell(R)^{n+2s},
	\end{align*}
	where in the last step we have used that the function $x\mapsto x\log(1/x)$ is bounded for $0<x\leq 1$. We move on to $\text{II}$. As discussed in \cite{MPrT}, one has
	\begin{equation}
		\label{eq3.2.6}
		\big\rvert m_{\psi_Q}\big( \partial_t^{\frac{1}{2s}}P_s\ast T\big) - \big( \partial_t^{\frac{1}{2s}}P_s\ast T\big)_Q \big\rvert \lesssim \|\partial_t^{\frac{1}{2s}}P_s\ast T\|_{\ast,p_s}\leq 1.
	\end{equation}
	One way to see this is to observe that
	\begin{align*}
		\big\rvert m_{\psi_Q}\big( \partial_t^{\frac{1}{2s}}P_s\ast T\big) &- \big( \partial_t^{\frac{1}{2s}}P_s\ast T\big)_Q \big\rvert \\
		&= \frac{1}{|Q|}\bigg\rvert \int_Q\big( \partial_t^{\frac{1}{2s}}P_s\ast T (\ox)- m_{\psi_Q}\big( \partial_t^{\frac{1}{2s}}P_s\ast T\big)\big)\dd\ox \bigg\rvert\\
		&\leq \frac{\|\psi_Q\|_1}{|Q|}\cdot\frac{1}{\|\psi_Q\|_1}\int_{2Q}\big\rvert \partial_t^{\frac{1}{2s}}P_s\ast T (\ox)- m_{\psi_Q}\big( \partial_t^{\frac{1}{2s}}P_s\ast T\big)\big\rvert \psi_Q(\ox)\dd\ox \\
		&\leq c_{n,s}\|\partial_t^{\frac{1}{2s}}P_s\ast T \|_{\text{BMO}_{p_s}(\psi_Q)},
	\end{align*}
	being the latter a weighted $s$-parabolic BMO space defined via the regular weight $\psi_Q$. The latter clearly belongs to the Muckenhoupt class $A_\infty$ and therefore, by a well-known classical result \cite{MuWh} which admits an extension to this in the current $s$-parabolic setting (just take into account $s$-parabolic cubes in the definition of the bounded mean oscillation space), we have the continuous inclusion $\text{BMO}_{p_s}\hookrightarrow \text{BMO}_{p_s}(\psi_Q)$ and we deduce \eqref{eq3.2.6}. Therefore,
	\begin{align*}
		\int_R|\text{II}(\ox)|\dd\ox &= \int_Q|\varphi(x,t)|\big\rvert \partial_t^{\frac{1}{2s}}P_s\ast T(x,t)-m_{\psi_Q}\big(\partial_t^{\frac{1}{2s}}P_s\ast T \big) \big\rvert \dd\ox\\
		&\lesssim \ell(Q)^{n+2s}+\int_Q\big\rvert \partial_t^{\frac{1}{2s}}P_s\ast T(x,t)-m_{\psi_Q}\big(\partial_t^{\frac{1}{2s}}P_s\ast T \big) \big\rvert \\
		&\leq \ell(Q)^{n+2s}+\ell(Q)^{n+2s}\|\partial_t^{\frac{1}{2s}}P_s\ast T\|_{\ast,p_s}\lesssim \ell(R)^{n+2s}.
	\end{align*}
	Let us now deal with $\text{III}$:
	\begin{align*}
		\text{III}(\ox)&:=\int_{\mathbb{R}}\frac{\big( \varphi(x,r)-\varphi(x,t) \big)\big( P_s\ast T(x,r)-P_s\ast T(x,t) \big)}{|r-t|^{1+\frac{1}{2s}}}\dd r\\
		&=\int_{|r-t|\leq \ell(Q)^{2s}}\frac{\big( \varphi(x,r)-\varphi(x,t) \big)\big( P_s\ast T(x,r)-P_s\ast T(x,t) \big)}{|r-t|^{1+\frac{1}{2s}}}\dd r\\
		&\hspace{2cm}+\int_{|r-t|>\ell(Q)^{2s}}\frac{\big( \varphi(x,r)-\varphi(x,t) \big)\big( P_s\ast T(x,r)-P_s\ast T(x,t) \big)}{|r-t|^{1+\frac{1}{2s}}}\dd r\\
		&=:\text{III}_1(\ox)+\text{III}_2(\ox).
	\end{align*}
	By the $\text{Lip}_{\frac{1}{2s},t}$ property of $P_s\ast T$ (Theorem \ref{thm3.1.1}) and $\|\partial_t\varphi\|_\infty\leq \ell(Q)^{-2s}$,
	\begin{align*}
		|\text{III}_1(\ox)|\lesssim \int_{|r-t|\leq \ell(Q)^{2s}}\frac{\ell(Q)^{-2s}|r-t||r-t|^{\frac{1}{2s}}}{|r-t|^{1+\frac{1}{2s}}}\dd r\lesssim 1,
	\end{align*}
	so that $\int_R |\text{III}_1(\ox)|\dd \ox\lesssim \ell(R)^{n+2s}$. For $\text{III}_2$ we have
	\begin{align*}
		\text{III}_2(\ox)&=\int_{|r-t|>\ell(Q)^{2s}}\frac{P_s\ast T(x,r)-P_s\ast T(x,t)}{|r-t|^{1+\frac{1}{2s}}}\varphi(x,r)\dd r\\
		&\hspace{0.5cm}-\varphi(x,t)\int_{|r-t|>\ell(Q)^{2s}}\frac{P_s\ast T(x,r)-P_s\ast T(x,t)}{|r-t|^{1+\frac{1}{2s}}}\dd r=:\text{III}_{21}(\ox)-\text{III}_{22}(\ox).
	\end{align*}
	Using again the Lipschitz property and the fact that $|\varphi(x,\cdot)|\leq \chi_{I_Q}$ we obtain
	\begin{equation*}
		|\text{III}_{21}(\ox)|\lesssim \int_{|r-t|>\ell(Q)^{2s}}\frac{|\varphi(x,r)|}{|r-t|}\dd r<\frac{1}{\ell(Q)^{2s}}\int |\varphi(x,r)|\dd r \lesssim 1,
	\end{equation*}
	so that we also have $\int_R |\text{III}_{21}(\ox)|\dd \ox\lesssim \ell(R)^{n+2s}$. Then, the only remaining term to study is $|-\text{III}_{22}+\text{IV}|$, $\text{III}_{22}$ and $\text{IV}$ being both supported on $Q$. So to conclude the proof it suffices to show for $\ox\in Q$:
	\begin{equation}
		\label{eq3.2.7}
		\bigg\rvert m_{\psi_Q}\big( \partial_t^{\frac{1}{2s}}P_s\ast T \big)-\int_{|r-t|>\ell(Q)^{2s}}\frac{P_s\ast T(x,r)-P_s\ast T(x,t)}{|r-t|^{1+\frac{1}{2s}}}\dd r \bigg\rvert \lesssim 1.
	\end{equation}
	Let us turn our attention to $m_{\psi_Q}( \partial_t^{\frac{1}{2s}}P_s\ast T )$ and begin by noticing
	\begin{align*}
		\partial_t^{\frac{1}{2s}}P_s\ast T (y,u) &= \int_{|r-u|\leq \ell(Q)^{2s}}\frac{P_s\ast T(y,r)-P_s\ast T (y,u)}{|r-u|^{1+\frac{1}{2s}}}\dd r\\
		&\hspace{1cm}+\int_{|r-u|> \ell(Q)^{2s}}\frac{P_s\ast T(y,r)-P_s\ast T (y,u)}{|r-u|^{1+\frac{1}{2s}}}\dd r =:F_1(\oy)+F_2(\oy).
	\end{align*}
	Observe that the kernel
	\begin{equation*}
		K_y(r,u):=\chi_{|r-u|\leq \ell(Q)^{2s}}\frac{P_s\ast T(y,r)-P_s\ast T(y,u)}{|r-u|^{1+\frac{1}{2s}}}
	\end{equation*}
	is antisymmetric, and therefore
	\begin{align*}
		m_{\psi_Q}F_1 &= \frac{1}{\|\psi_Q\|_1}\iiint K_y(r,u)\psi_Q(y,u)\dd r \dd y \dd u\\
        &=-\frac{1}{\|\psi_Q\|_1}\iiint K_y(r,u)\psi_Q(y,r)\dd u \dd y \dd r\\
		&=\frac{1}{2\|\psi_Q\|_1}\iiint K_y(r,u)\big(\psi_Q(y,u)-\psi_Q(y,r)\big)\dd r \dd y \dd u.
	\end{align*}
	Then,
	\begin{align*}
		|&m_{\psi_Q}F_1|\\
		&\leq \frac{1}{2\|\psi_Q\|_1}\iint \bigg(\int_{|r-u|\leq \ell(Q)^{2s}}\frac{|P_s\ast T(y,r)-P_s\ast T(y,u)|}{|r-u|^{1+\frac{1}{2s}}}\big\rvert \psi_Q(y,u)-\psi_Q(y,r) \big\rvert\dd u\bigg) \dd y \dd r\\
		&\lesssim \frac{1}{\ell(Q)^{n+2s}}\int_{2Q_1}\int_{2^{2s}I_Q}\int_{|r-u|\leq \ell(Q)^{2s}}\frac{|r-u|^{\frac{1}{2s}}|r-u|}{|r-u|^{1+\frac{1}{2s}}\ell(Q)^{2s}}\dd u\dd y \dd r\lesssim 1. 
	\end{align*}
	So returning to \eqref{eq3.2.7} we are left to show
	\begin{align*}
		\bigg\rvert m_{\psi_Q}F_2-\int_{|r-t|>\ell(Q)^{2s}}\frac{P_s\ast T(x,r)-P_s\ast T(x,t)}{|r-t|^{1+\frac{1}{2s}}}\dd r \bigg\rvert \lesssim 1, \qquad (x,t)\in Q,
	\end{align*}
	that in turn is implied by the inequality
	\begin{align*}
		\bigg\rvert F_2(\oy)&- \int_{|r-t|>\ell(Q)^{2s}}\frac{P_s\ast T(x,r)-P_s\ast T(x,t)}{|r-t|^{1+\frac{1}{2s}}}\dd r \bigg\rvert =|F_2(\oy)-F_2(\ox)|\lesssim 1,
	\end{align*}
    where $(y,u)\in 2Q$. Write $A_t:=\{r\,:\, |r-t|>\ell(Q)^{2s}\}$ and  $A_u:=\{r\,:\, |r-u|>\ell(Q)^{2s}\}$, so that
	\begin{align*}
		|F_2&(\oy)-F_2(\ox)|\\
        &\leq \int_{A_u\setminus{A_t}} \frac{P_s\ast T(y,r)-P_s\ast T(y,u)}{|r-u|^{1+\frac{1}{2s}}}\dd r+\int_{A_t\setminus{A_u}} \frac{P_s\ast T(x,r)-P_s\ast T(x,t)}{|r-t|^{1+\frac{1}{2s}}}\dd r\\
		&\hspace{0.75cm}+\int_{A_u\cap A_t}\bigg\rvert \frac{P_s\ast T(x,r)-P_s\ast T(x,t)}{|r-t|^{1+\frac{1}{2s}}} - \frac{P_s\ast T(y,r)-P_s\ast T(y,u)}{|r-u|^{1+\frac{1}{2s}}} \bigg\rvert\dd r\\
		&=:\text{I}'+\text{II}'+\text{III}'.
	\end{align*}
	Using the $\text{Lip}_{\frac{1}{2s},t}$ property of $P_s\ast T$ and that $|r-u|\approx \ell(Q)^{2s}$ in $A_u\setminus A_t$, and $|r-t|\approx \ell(Q)^{2s}$ in $A_t\setminus A_u$, it is easily checked $\text{I}'+\text{II}'\lesssim 1$. Concerning $\text{III}'$ we split
	\begin{align*}
		\text{III}'\leq \int_{A_u\cap A_t}\bigg\rvert &\frac{1}{|r-t|^{1+\frac{1}{2s}}} - \frac{1}{|r-u|^{1+\frac{1}{2s}}} \bigg\rvert|P_s\ast T(x,r)-P_s\ast T (x,t)|\dd r\\
		&+\int_{A_u\cap A_t}\frac{|P_s\ast T(x,r)-P_s\ast T(x,t)-P_s\ast T(y,r)+P_s\ast T(y,u)|}{|r-u|^{1+\frac{1}{2s}}}\dd r\\
		&=:\text{III}'_{1}+\text{III}'_{2}.
	\end{align*}
	For $\text{III}'_{1}$ notice that, in the domain of integration, by the mean value theorem we have
	\begin{equation*}
		\bigg\rvert \frac{1}{|r-t|^{1+\frac{1}{2s}}} - \frac{1}{|r-u|^{1+\frac{1}{2s}}} \bigg\rvert\lesssim \frac{|t-u|}{|r-t|^{2+\frac{1}{2s}}}.
	\end{equation*}
	Therefore, by the $\text{Lip}_{\frac{1}{2s},t}$ property of $P_s\ast T$,
	\begin{equation*}
		\text{III}'_{1}\lesssim \int_{|r-t|>\ell(Q)^{2s}}\frac{|t-u|}{|r-t|^{2+\frac{1}{2s}}}|r-t|^{\frac{1}{2s}}\dd r \lesssim 1.
	\end{equation*}
	Regarding $\text{III}'_{2}$, apply the $(1,\frac{1}{2s})$-Lipschitz property of $P_s\ast T$ and obtain
	\begin{align*}
		\text{III}'_{2}&\leq \int_{|r-u|>\ell(Q)^{2s}} \frac{|P_s\ast T(x,r)-P_s\ast T(y,r)|+|P_s\ast T(y,u)+P_s\ast T(x,t)|}{|r-u|^{1+\frac{1}{2s}}}\dd r\\
		&\lesssim \int_{|r-u|>\ell(Q)^{2s}}\frac{\ell(Q)}{|r-u|^{1+\frac{1}{2s}}}\dd r \lesssim 1.
	\end{align*}
	This finally shows $|F_2(\oy)-F_2(\ox)|\lesssim 1$, for all $\ox\in Q$ and $\oy\in 2Q$ and we are done.
\end{proof}
\begin{lem}
	\label{lem3.2.6}
	Let $Q\subset \mathbb{R}^{n+1}$ be an $s$-parabolic cube and let $T$ be a distribution supported in $\mathbb{R}^{n+1}\setminus{4Q}$ with upper $s$-parabolic growth of degree $n+1$ and such that $\|\nabla_xP_s\ast T\|_\infty\leq 1$ and $\|P_s\ast T\|_{\text{\normalfont{Lip}}_{\frac{1}{2s},t}}\leq 1$. Then,
	\begin{equation*}
		\int_Q \big\rvert  \partial_t^{\frac{1}{2s}}P_s\ast T(\ox) -\big( \partial_t^{\frac{1}{2s}}P_s\ast T \big)_Q \big\rvert \dd \ox \lesssim \ell(Q)^{n+2s}.
	\end{equation*}
\end{lem}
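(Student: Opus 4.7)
The plan is to establish the stronger pointwise estimate
\begin{equation*}
    |\partial_t^{\frac{1}{2s}} P_s \ast T(\ox) - \partial_t^{\frac{1}{2s}} P_s \ast T(\ox')| \lesssim 1 \qquad \text{for every } \ox,\ox' \in Q,
\end{equation*}
from which the claimed bound on the mean oscillation follows at once by $\ox'$-averaging over $Q$ and using $|Q|=\ell(Q)^{n+2s}$. The key starting observation is that, since $\mathrm{supp}(T)\subset \mathbb{R}^{n+1}\setminus 4Q$, the map $\oy\mapsto K(\ox-\oy)$ with $K:=\partial_t^{\frac{1}{2s}}P_s$ is $\pazocal{C}^\infty$ on an $s$-parabolic neighborhood of $\mathrm{supp}(T)$, so that the distributional pairing $\partial_t^{\frac{1}{2s}}P_s\ast T(\ox)=\langle T,K(\ox-\cdot)\rangle$ makes sense through any smooth cutoff supported outside $2Q$.

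My next step would be to introduce an annular decomposition of $\mathbb{R}^{n+1}\setminus 4Q$: fix a smooth partition of unity $\{\psi_k\}_{k\geq 2}$ such that $\mathrm{supp}(\psi_k)\subset 2^{k+1}Q\setminus 2^{k-1}Q$, $\sum_k\psi_k\equiv 1$ on $\mathbb{R}^{n+1}\setminus 4Q$, and $\|\nabla_x^\alpha\partial_t^\beta \psi_k\|_\infty\lesssim (2^k\ell(Q))^{-|\alpha|-2s\beta}$ for the relevant multi-indices. Then
\begin{equation*}
    \partial_t^{\frac{1}{2s}}P_s\ast T(\ox)-\partial_t^{\frac{1}{2s}}P_s\ast T(\ox') = \sum_{k\geq 2}\langle T,\,\psi_k[K(\ox-\cdot)-K(\ox'-\cdot)]\rangle.
\end{equation*}

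For each $k$ I would use the $s$-parabolic homogeneity of $K$ (of degree $-n-1$), which gives $|K(\oz)|\lesssim |\oz|_{p_s}^{-(n+1)}$ on $\mathbb{R}^{n+1}\setminus\{0\}$, together with analogous bounds for $\nabla_x K$, $\partial_t K$ and $\Delta K$ (of degrees $-n-2$, $-n-1-2s$ and $-n-3$, respectively). Since $|\ox-\ox'|_{p_s}\lesssim\ell(Q)$ and $|\ox-\oy|_{p_s}\approx 2^k\ell(Q)$ on $\mathrm{supp}(\psi_k)$, a mean-value estimate then yields
\begin{equation*}
    |K(\ox-\oy)-K(\ox'-\oy)|\lesssim 2^{-k}(2^k\ell(Q))^{-(n+1)},
\end{equation*}
with corresponding bounds for the spatial gradient, time derivative and Laplacian of $\psi_k[K(\ox-\cdot)-K(\ox'-\cdot)]$ carrying the extra factors $(2^k\ell(Q))^{-1}$, $(2^k\ell(Q))^{-2s}$ and $(2^k\ell(Q))^{-2}$. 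Consequently, for an appropriately small absolute constant $c_0>0$, the function
\begin{equation*}
    \phi_k:=c_0\,2^k(2^k\ell(Q))^{n+1}\,\psi_k[K(\ox-\cdot)-K(\ox'-\cdot)]
\end{equation*}
is admissible for the $s$-parabolic cube $2^{k+1}Q$. Applying the assumed $s$-parabolic $(n+1)$-growth of $T$ to each $\phi_k$ then gives
\begin{equation*}
    |\langle T,\psi_k[K(\ox-\cdot)-K(\ox'-\cdot)]\rangle|\lesssim 2^{-k}(2^k\ell(Q))^{-(n+1)}\cdot (2^{k+1}\ell(Q))^{n+1}\lesssim 2^{-k},
\end{equation*}
and summing the geometric series over $k\geq 2$ delivers the desired pointwise bound.

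I expect the main obstacle to be the careful verification of the kernel smoothness estimates for $K$ and its derivatives. The refined pointwise bounds from \cite[Lemma 2.4]{HMP} carry the additional spatial factor $|x|^{-(n-2s)}$ for $n>1$, so one needs to argue via the homogeneity of $K$ away from the origin, and to separate the case $n=1$ where the kernel bounds take a slightly different form. This forces a case analysis of the type already carried out in Lemma \ref{lem3.2.5}, but no fundamentally new ideas are needed, and all the book-keeping of derivative estimates through the annular decomposition is routine once the correct homogeneity-based bounds are in place.
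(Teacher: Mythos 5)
Your plan has a genuine gap that makes it fail for $n\geq 2$. You propose to apply the upper $s$-parabolic $(n+1)$-growth of $T$ to the test functions
\[
\phi_k:=c_0\,2^k(2^k\ell(Q))^{n+1}\,\psi_k\big[K(\ox-\cdot)-K(\ox'-\cdot)\big],\qquad K=\partial_t^{\frac{1}{2s}}P_s,
\]
but to do so $\phi_k$ must be \emph{admissible}, and in particular bounded. The bound you quote, $|K(\oz)|\lesssim|\oz|_{p_s}^{-(n+1)}$, does not hold. Although $K$ is $s$-parabolically homogeneous of degree $-(n+1)$, it is \emph{not} bounded on the $s$-parabolic unit sphere: the fractional time derivative $\partial_t^{\frac{1}{2s}}$ is nonlocal and picks up the singularity of $P_s$ at the origin even far away in time, producing a singularity along the time axis. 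Concretely, the estimate from \cite[Lemma 2.4]{HMP} (which is what you would actually rely on) is $|K(\ox)|\lesssim |x|^{-(n-2s)}|\ox|_{p_s}^{-(2s+1)}$ for $n\geq 2$, and this blows up as the spatial component $|x|\to 0$. Since the annulus $\mathrm{supp}(\psi_k)\subset 2^{k+1}Q\setminus 2^{k-1}Q$ contains points $\oy$ with $|x-y|\ll 2^k\ell(Q)$ but $|t-u|^{1/2s}\approx 2^k\ell(Q)$, the functions $\phi_k$ are unbounded, hence not admissible, and the growth hypothesis on $T$ cannot be invoked. This is not a mere book-keeping issue: it is the reason the paper's proof never pairs $T$ directly against $\partial_t^{\frac{1}{2s}}P_s$.

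The paper's argument circumvents this by expanding $\partial_t^{\frac{1}{2s}}\big(P_s\ast T\big)(\ox)$ through the integral definition of $\partial_t^{\frac{1}{2s}}$ applied to the function $P_s\ast T$, and splitting the time integral at $|r-t|\approx\ell(Q)^{2s}$. The near-time contribution is controlled by the pointwise bound $\|\partial_t P_s^\varepsilon\ast T\|_{\infty,3Q}\lesssim\ell(Q)^{1-2s}$, which is established by an annular decomposition exactly in your spirit but with the kernel $\partial_t P_s$ — a \emph{local} time derivative that decays uniformly like $|\oz|_{p_s}^{-(n+2s)}$ and has no time-axis singularity, so the resulting cutoff kernels are genuinely admissible and Theorem~\ref{thm3.2.1} applies. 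The far-time contribution is handled using only the $(1,\frac{1}{2s})$-Lipschitz properties of $P_s\ast T$. This split — local time derivative near, Lipschitz regularity far — is the missing idea; a regularization $P_s^\varepsilon=\psi_\varepsilon\ast P_s$ is also needed to justify the intermediate time differentiation. You would need to restructure your proof along these lines rather than trying to patch the kernel estimates in your annular pairing.
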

\begin{proof}
	Let us fix $Q$ $s$-parabolic cube. To obtain the result it suffices to verify
	\begin{equation*}
		\big\rvert  \partial_t^{\frac{1}{2s}}P_s\ast T(\ox) - \partial_t^{\frac{1}{2s}}P_s\ast T(\oy) \big\rvert \lesssim 1, \qquad \text{for }\; \ox,\oy \in Q.
	\end{equation*}
	To be precise, in order to avoid possible $t$-differentiability obstacles regarding the kernel $P_s$, we resort a standard regularization process: take $\psi$ test function supported on the unit $s$-parabolic ball $B(0,1)$ such that $\int \psi = 1$ and set $\psi_\varepsilon:=\varepsilon^{-n-2s}\psi(\cdot/\varepsilon)$ and the regularized kernel $P_s^{\varepsilon}:=\psi_\varepsilon \ast P_s$. As it already mentioned in \cite{MPr}, the previous kernel satisfies the same growth estimates as $P_s$ (see \cite[Theorem 2.2]{HeMPr} or \cite[Lemma 2.2]{MPr}).  If we prove
	\begin{equation*}
		\big\rvert  \partial_t^{\frac{1}{2s}}P_s^{\varepsilon}\ast T(\ox) - \partial_t^{\frac{1}{2s}}P_s^\varepsilon\ast T(\oy) \big\rvert \lesssim 1, \qquad \text{for }\; \ox,\oy \in Q.
	\end{equation*}
	uniformly on $\varepsilon$ we will be done, since making $\varepsilon\to 0$ would allow us to recover the original expression for almost every point. Moreover, we shall also assume that
	\begin{enumerate}
		\item[\textit{i}.] $\ox=(x,t)$ and $\oy=(y,t)$,
		\item[\textit{ii}.] or that $\ox =(x,t)$ and $\oy = (x,u)$.
	\end{enumerate}
	Let first tackle case \textit{i}. We compute:
	\begin{align*}
		\big\rvert  \partial_t^{\frac{1}{2s}}&P_s^{\varepsilon}\ast T(x,t) - \partial_t^{\frac{1}{2s}}P_s^{\varepsilon}\ast T(y,t) \big\rvert\\
		&=\bigg\rvert \int_{\mathbb{R}} \frac{P_s^{\varepsilon}\ast T(x,r)-P_s^{\varepsilon}\ast T(x,t)}{|r-t|^{1+\frac{1}{2s}}}\dd r -  \int_{\mathbb{R}} \frac{P_s^{\varepsilon}\ast T(y,r)-P_s^{\varepsilon}\ast T(y,t)}{|r-t|^{1+\frac{1}{2s}}}\dd r\bigg\rvert\\
		&\leq \int_{|r-t|\leq 2^{2s}\ell(Q)^{2s}}\frac{|P_s^{\varepsilon}\ast T(x,r)-P_s^{\varepsilon}\ast T(x,t)|}{|r-t|^{1+\frac{1}{2s}}}\dd r\\
		&\hspace{0.75cm}+\int_{|r-t|\leq 2^{2s}\ell(Q)^{2s}}\frac{|P_s^{\varepsilon}\ast T(y,r)-P_s^{\varepsilon}\ast T(y,t)|}{|r-t|^{1+\frac{1}{2s}}}\dd r\\
		&\hspace{0.75cm}+\int_{|r-t|>2^{2s}\ell(Q)^{2s}}\frac{|P_s^{\varepsilon}\ast T(x,r)-P_s^{\varepsilon}\ast T(x,t)-P_s^{\varepsilon}\ast T(y,r)+P_s^{\varepsilon}\ast T(y,t)|}{|r-u|^{1+\frac{1}{2s}}}\dd r\\
		&=:\text{I}+\text{II}+\text{III}.
	\end{align*}
	Let us estimate $\text{I}$. For $r,t$ such that $|r-t|\leq 2^{2s}\ell(Q)^{2s}$, we write
	\begin{equation*}
		|P_s^{\varepsilon}\ast T(x,r)-P_s^{\varepsilon}\ast T(x,t)|\leq |r-t|\|\partial_tP_s^\varepsilon\ast T\|_{\infty, 3Q}.
	\end{equation*}
	We claim that
	\begin{equation}
		\label{eq3.2.8}
		\|\partial_tP_s^\varepsilon\ast T\|_{\infty, 3Q}\lesssim \frac{1}{\ell(Q)^{2s-1}}.
	\end{equation}
	If this holds,
	\begin{equation*}
		\text{I}\lesssim \int_{|r-t|\leq 2^{2s}\ell(Q)^{2s}} \frac{|r-t|}{\ell(Q)^{2s-1}|r-t|^{1+\frac{1}{2s}}}\dd r \lesssim \frac{\big( \ell(Q)^{2s}\big)^{1-\frac{1}{2s}}}{\ell(Q)^{2s-1}}=1.
	\end{equation*}
	Analogously, interchanging the roles of $\ox$ and $\oy$, we deduce $\text{II}\lesssim 1$. Regarding $\text{III}$,
	\begin{align*}
		\text{III}\leq \int_{|r-t|>2^{2s}\ell(Q)^{2s}} &\frac{|P_s^{\varepsilon}\ast T(x,r)-P_s^{\varepsilon}\ast T(y,r)|}{|r-u|^{1+\frac{1}{2s}}}\dd r\\
		&+\int_{|r-t|>2^{2s}\ell(Q)^{2s}} \frac{|P_s^{\varepsilon}\ast T(x,t)-P_s^{\varepsilon}\ast T(y,t)|}{|r-u|^{1+\frac{1}{2s}}}\dd r.
	\end{align*}
	Then $|P_s^{\varepsilon}\ast T(x,r)-P_s^{\varepsilon}\ast T(y,r)|\leq \|\nabla_xP_s^{\varepsilon}\ast T\|_{\infty}|x-y|\leq \|\psi_\varepsilon\|_1\|\nabla_xP_s\ast T\|_{\infty}|x-y|\lesssim \ell(Q)$. The same holds replacing $(x,r)$ and $(y,r)$ by $(x,t)$ and $(y,t)$. Hence,
	\begin{equation*}
		\text{III}\lesssim \int_{|r-t|>2^{2s}\ell(Q)^{2s}}\frac{\ell(Q)}{|r-t|^{1+\frac{1}{2s}}}\dd r \lesssim \frac{\ell(Q)}{\big( \ell(Q)^{2s}\big)^{\frac{1}{2s}}}=1.
	\end{equation*}
	So once \eqref{eq3.2.8} is proved, case \textit{i}) is done. To prove it, we split $\mathbb{R}^{n+1}\setminus{4Q}$ into $s$-parabolic annuli $A_k:=2^{k+1}Q\setminus 2^k Q$ and consider $\pazocal{C}^\infty$ functions $\widetilde{\chi}_k$ such that $\chi_{A_k}\leq \widetilde{\chi}_k \leq \chi_{\frac{11}{10}A_k}$, as well as $\|\nabla_x \widetilde{\chi}_k\|_\infty \lesssim (2^{k}\ell(Q))^{-1}$, $\|\partial_t \widetilde{\chi}_k\|_\infty \lesssim (2^k\ell(Q))^{-2s}$ and $\sum_{k\geq 2}\widetilde{\chi}_k = 1$ on $\mathbb{R}^{n+1}\setminus{4Q}$.
    
	Let us fix $\oz = (z,v)\in 3Q$ and observe
	\begin{equation*}
		|\partial_tP_s^{\varepsilon}\ast T (\oz)|\leq \sum_{k\geq 2} |\partial_tP_s^{\varepsilon}\ast \widetilde{\chi}_kT (\oz)|.
	\end{equation*}
	We prove $|\partial_tP_s^{\varepsilon}\ast \widetilde{\chi}_kT (\oz)|\lesssim (2^k\ell(Q))^{-2s+1}$, and with it we will be done. Write
	\begin{equation*}
		|\partial_tP_s^{\varepsilon}\ast \widetilde{\chi}_kT (\oz)|=|\langle T, \widetilde{\chi}_k\partial_tP_s^{\varepsilon}(\oz-\cdot)\rangle|=:|\langle T, \phi_{k,\oz}^{\varepsilon}\rangle|.
	\end{equation*}
	We study $\|\nabla_x \phi^{\varepsilon}_{k,\oz}\|_\infty$ and $\|\partial_t \phi^{\varepsilon}_{k,\oz}\|_\infty$ in order to apply the upper $s$-parabolic growth of $T$. On the one hand we have, for each $\ox=(x,t)\in A_k$ with $t \neq v$, by \cite[Lemma 2.2]{MPr} and \cite[Theorem 2.2]{HeMPr},
	\begin{align*}
		|\nabla_x\phi_{k,\oz}^{\varepsilon}(\ox)|&\leq |\nabla_x\widetilde{\chi}_k\cdot\partial_tP_s^{\varepsilon}(\oz-\ox) |+|\widetilde{\chi}_k\cdot\nabla_x\partial_tP_s^{\varepsilon}(\oz-\ox)|\\
		&\lesssim (2^k\ell(Q))^{-1}\frac{1}{|\ox-\oz|_{p_s}^{n+2s}}+\frac{1}{|\ox-\oz|_{p_s}^{n+2s+1}}\lesssim (2^k\ell(Q))^{-(n+2s+1)},
	\end{align*}
	and then $\|\nabla_x\phi_{k,\oz}^{\varepsilon}\|\lesssim (2^k\ell(Q))^{-(n+2s+1)}$. Similarly, for each $\ox=(x,t)\in A_k$ with $t\neq v$,
	\begin{align*}
		|\partial_t\phi_{k,\oz}^{\varepsilon}(\ox)|&\leq |\partial_t\widetilde{\chi}_k\cdot\partial_tP_s^{\varepsilon}(\oz-\ox) |+|\widetilde{\chi}_k\cdot\partial_t^2P_s^{\varepsilon}(\oz-\ox)|\\
		&\lesssim (2^k\ell(Q))^{-2s}\frac{1}{|\ox-\oz|_{p_s}^{n+2s}}+\frac{1}{|\ox-\oz|_{p_s}^{n+4s}}\lesssim (2^k\ell(Q))^{-(n+4s)},
	\end{align*}
	where the bound for $\partial_t^2P_s^{\varepsilon}$ can be argued with same arguments to those of \cite[Lemma 2.2]{MPr}, for example. Then, $\|\partial_t\phi_{k,\oz}^{\varepsilon}\|\lesssim (2^k\ell(Q))^{-(n+4s)}$ and with this we deduce that $(2^k\ell(Q))^{n+2s}\phi_{k,\oz}^{\varepsilon}$ is a function to which we can apply Theorem \ref{thm3.2.1}. This way,
	\begin{equation*}
		|\partial_tP_s^{\varepsilon}\ast \widetilde{\chi}_kT (\oz)|=:|\langle T, \phi_{k,\oz}^{\varepsilon}\rangle|\lesssim \frac{1}{(2^k\ell(Q))^{n+2s}}(2^k\ell(Q))^{n+1} = (2^k\ell(Q))^{-2s+1},
	\end{equation*}
	that is what we wanted to prove, and this ends case \textit{i}.
    
	We move on to case \textit{ii}, that is, $\ox =(x,t)$ and $\oy = (x,u)$. We proceed as in \textit{i} and write
	\begin{align*}
		\big\rvert  \partial_t^{\frac{1}{2s}}&P_s^{\varepsilon}\ast T(x,t) - \partial_t^{\frac{1}{2s}}P_s^{\varepsilon}\ast T(x,u) \big\rvert\\
		&=\bigg\rvert \int_{\mathbb{R}} \frac{P_s^{\varepsilon}\ast T(x,r)-P_s^{\varepsilon}\ast T(x,t)}{|r-t|^{1+\frac{1}{2s}}}\dd r -  \int_{\mathbb{R}} \frac{P_s^{\varepsilon}\ast T(x,r)-P_s^{\varepsilon}\ast T(x,u)}{|r-u|^{1+\frac{1}{2s}}}\dd r\bigg\rvert\\
		&\leq \int_{|r-t|\leq 2^{2s}\ell(Q)^{2s}}\frac{|P_s^{\varepsilon}\ast T(x,r)-P_s^{\varepsilon}\ast T(x,t)|}{|r-t|^{1+\frac{1}{2s}}}\dd r\\
		&\hspace{0.75cm}+\int_{|r-t|\leq 2^{2s}\ell(Q)^{2s}}\frac{|P_s^{\varepsilon}\ast T(x,r)-P_s^{\varepsilon}\ast T(x,u)|}{|r-u|^{1+\frac{1}{2s}}}\dd r\\
		&\hspace{0.75cm}+\int_{|r-t|>2^{2s}\ell(Q)^{2s}} \bigg\rvert \frac{P_s\ast T(x,r)-P_s\ast T(x,t)}{|r-t|^{1+\frac{1}{2s}}} - \frac{P_s\ast T(x,r)-P_s\ast T(x,u)}{|r-u|^{1+\frac{1}{2s}}} \bigg\rvert \dd r\\
		&=:\text{I}'+\text{II}'+\text{III}'.
	\end{align*}
	$\text{I}'$ and $\text{II}'$ can be dealt with as $\text{I}$ and $\text{II}$ appearing in case \textit{i}. Then $\text{I}'+\text{II}'\lesssim 1$. Concerning $\text{III}'$ we have, by the $\text{Lip}_{\frac{1}{2s},t}$ property of $P_s\ast T$ (and thus of $P_s^\varepsilon\ast T$, since $\psi_\varepsilon$ integrates 1),
	\begin{align*}
		\text{III}'&\leq \int_{|r-t|>2^{2s}\ell(Q)^{2s}}\bigg\rvert \frac{1}{|r-t|^{1+\frac{1}{2s}}} - \frac{1}{|r-u|^{1+\frac{1}{2s}}} \bigg\rvert |P_s\ast T(x,r)-P_s\ast T(x,t)|\dd r\\
		&\hspace{3.5cm}+\int_{|r-t|>2^{2s}\ell(Q)^{2s}}\frac{1}{|r-u|^{1+\frac{1}{2s}}}|P_s\ast T(x,t)-P_s\ast T(x,u)|\dd r\\
		&\lesssim \int_{|r-t|>2^{2s}\ell(Q)^{2s}}\frac{\ell(Q)^{2s}}{|r-t|^{2+\frac{1}{2s}}}|r-t|^{\frac{1}{2s}}\dd r + \int_{|r-t|>2^{2s}\ell(Q)^{2s}}\frac{|t-u|^{\frac{1}{2s}}}{|r-u|^{1+\frac{1}{2s}}}\dd r\lesssim 1.
	\end{align*}
\end{proof}
Applying the above results we are able to deduce the following final lemma:
\begin{lem}
	\label{lem3.2.7}
	Let $T$ be a distribution in $\mathbb{R}^{n+1}$ satisfying
	\begin{equation*}
		\|\nabla_x P_s \ast T\|_{\infty}\leq 1, \qquad \|\partial_t^{\frac{1}{2s}}P_s\ast T\|_{\ast,p_s}\leq 1.
	\end{equation*}
	Let $Q$ be an $s$-parabolic cube and $\varphi$ admissible function for $Q$. Then,
	\begin{equation*}
		\|\partial_t^{\frac{1}{2s}}P_s\ast \varphi T\|_{\ast,p_s}\lesssim 1.
	\end{equation*}
\end{lem}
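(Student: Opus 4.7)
The plan is to show that, for every $s$-parabolic cube $R$, the mean oscillation $\frac{1}{|R|}\int_R |F-F_R|$ of $F:=\partial_t^{\frac{1}{2s}}P_s\ast\varphi T$ is bounded by an absolute constant, and then to argue by cases on the relative location and scale of $R$ with respect to $Q$. Two standing observations will be used throughout. First, the hypotheses on $T$ together with Theorem \ref{thm3.1.1} yield $\|P_s\ast T\|_{\text{Lip}_{\frac{1}{2s},t}}\lesssim 1$, so Lemmas \ref{lem3.2.2} and \ref{lem3.2.4} apply to any admissible localization of $T$; in particular $\|\nabla_xP_s\ast\varphi T\|_\infty\lesssim 1$ and $\|P_s\ast\varphi T\|_{\text{Lip}_{\frac{1}{2s},t}}\lesssim 1$. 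Second, Theorem \ref{thm3.2.1} supplies $(n+1)$-parabolic growth of $\varphi T$ (extended trivially to scales $\geq\ell(Q)$ by multiplying the test function with a bump supported in $2Q$). These two facts put $\varphi T$ and its finer cutoffs within the scope of Lemmas \ref{lem3.2.5} and \ref{lem3.2.6}.

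If $4R\cap Q=\emptyset$, then $\varphi T$ is supported in $\mathbb{R}^{n+1}\setminus 4R$, and Lemma \ref{lem3.2.6} applied with cube $R$ directly yields the oscillation bound. If $4R\cap Q\neq\emptyset$ and $\ell(R)\geq\ell(Q)$, I would pick an $s$-parabolic cube $R^{\ast}$ of side length $\lesssim\ell(R)$ containing $Q$; Lemma \ref{lem3.2.5} then gives $\int_{R^{\ast}}|F|\lesssim\ell(R^{\ast})^{n+2s}\approx|R|$, and consequently
$$\frac{1}{|R|}\int_R|F-F_R|\leq \frac{2}{|R|}\int_R|F|\lesssim 1.$$

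The delicate case is $4R\cap Q\neq\emptyset$ with $\ell(R)<\ell(Q)$. Introduce $\eta_R\in\pazocal{C}^\infty$ with $\eta_R\equiv 1$ on $4R$, $\text{supp}(\eta_R)\subset 8R$, and $\|\nabla_x\eta_R\|_\infty\lesssim\ell(R)^{-1}$, $\|\partial_t\eta_R\|_\infty\lesssim\ell(R)^{-2s}$, $\|\Delta\eta_R\|_\infty\lesssim\ell(R)^{-2}$. Because $\ell(R)\leq\ell(Q)$, the product $\eta_R\varphi$ is admissible for $8R$ up to an absolute constant. Split $\varphi T=T_1+T_2$ with $T_1:=(\eta_R\varphi)T$ and $T_2:=(1-\eta_R)\varphi T$, and $F=F_1+F_2$ correspondingly. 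For $F_1$, Lemma \ref{lem3.2.5} with admissible function $\eta_R\varphi$ and cube pair $(8R,8R)$ gives $\int_{8R}|F_1|\lesssim\ell(R)^{n+2s}\approx|R|$, so $\frac{1}{|R|}\int_R|F_1-(F_1)_R|\lesssim 1$. For $F_2$, the key point is that $\text{supp}(T_2)\subset\mathbb{R}^{n+1}\setminus 4R$: writing $T_2=\varphi T-(\eta_R\varphi)T$ as the difference of two admissible localizations of $T$ and applying Lemmas \ref{lem3.2.2}, \ref{lem3.2.4} and Theorem \ref{thm3.2.1} to each summand shows that $T_2$ meets the hypotheses of Lemma \ref{lem3.2.6}, which produces the desired bound on $R$ for $F_2$.

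The main obstacle is precisely this third case: $R$ lies inside the support of $\varphi T$, so neither Lemma \ref{lem3.2.5} (the smallest cube $\supset Q$ is too large relative to $R$, and the gain $|R^{\ast}|^{-1}\int_{R^{\ast}}|F|\lesssim 1$ does not transfer to a gain on $R$) nor Lemma \ref{lem3.2.6} (insufficient separation between $R$ and $\text{supp}(\varphi T)$) applies directly. The two-scale cutoff above circumvents this mismatch, and its success rests on $\ell(R)\leq\ell(Q)$, which is what makes $\eta_R\varphi$ again admissible for an $s$-parabolic cube (of side $8\ell(R)$) and thus allows Lemma \ref{lem3.2.5} to be invoked at the scale of $R$ rather than of $Q$.
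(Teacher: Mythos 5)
Your proof is correct and follows essentially the same strategy as the paper: split $\varphi T$ via a smooth cutoff into a piece localized near the test cube (handled through Lemma \ref{lem3.2.5}) and a piece supported away from the test cube (handled through Lemma \ref{lem3.2.6}), after verifying via Lemmas \ref{lem3.2.2}, \ref{lem3.2.4} and Theorem \ref{thm3.2.1} that the cutoff pieces inherit the needed $L^\infty$, Lipschitz, and growth properties. The only cosmetic difference is that the paper always applies the cutoff $\phi_{5\widetilde{Q}}$ and then branches on $\ell(\widetilde{Q})\lessgtr\ell(Q)$, whereas you dispense with the cutoff in your Case 2 (where $\ell(R)\geq\ell(Q)$) by enclosing both $Q$ and $R$ in a common cube $R^{\ast}$ and applying Lemma \ref{lem3.2.5} directly (you should state explicitly that $R^{\ast}$ is chosen to contain $R$ as well as $Q$, which your argument implicitly requires).
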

\begin{proof}
	We fix $\widetilde{Q}$ any $s$-parabolic cube and prove that there exists $c_{\widetilde{Q}}$ constant so that
	\begin{equation*}
		\int_{\widetilde{Q}}\big\rvert \partial_t^{\frac{1}{2s}}P_s\ast \varphi T (\ox)-c_{\widetilde{Q}}\big\rvert \dd \ox \lesssim \ell(\widetilde{Q})^{n+2s}.
	\end{equation*}
	To do so, pick a $\pazocal{C}^\infty$ bump function $\phi_{5\widetilde{Q}}$ with $\chi_{5\widetilde{Q}}\leq \phi_{5\widetilde{Q}}\leq \chi_{6\widetilde{Q}}$ and satisfying
	\begin{equation*}
		\|\nabla_x \phi_{5\widetilde{Q}}\|_{\infty} \lesssim \ell(\widetilde{Q})^{-1}, \qquad \|\Delta \phi_{5\widetilde{Q}}\|_{\infty} \lesssim \ell(\widetilde{Q})^{-2}, \qquad \|\partial_t \phi_{5\widetilde{Q}}\|_{\infty} \lesssim \ell(\widetilde{Q})^{-2s}.
	\end{equation*}
	We also write $\phi_{5\widetilde{Q}^c}:=1-\phi_{5\widetilde{Q}}$. Then we split
	\begin{align*}
		\int_{\widetilde{Q}}\big\rvert &\partial_t^{\frac{1}{2s}}P_s\ast \varphi T (\ox)-c_{\widetilde{Q}}\big\rvert \dd \ox\\
		&\leq \int_{\widetilde{Q}}\big\rvert \partial_t^{\frac{1}{2s}}P_s\ast \phi_{5\widetilde{Q}}\varphi T (\ox)\big\rvert \dd \ox + \int_{\widetilde{Q}}\big\rvert \partial_t^{\frac{1}{2s}}P_s\ast \phi_{5\widetilde{Q}^c}\varphi T (\ox)-c_{\widetilde{Q}}\big\rvert \dd \ox =:\text{I}+\text{II}.
	\end{align*}
	Let us estimate $\text{II}$ applying Lemma \ref{lem3.2.6}. Notice that $\text{supp}(\phi_{5\widetilde{Q}^c}\varphi T)\subset \overline{(5\widetilde{Q})^c}\cap Q$. We claim that
	\begin{equation}
		\label{eq3.2.9}
		\|\nabla_x P_s\ast \phi_{5\widetilde{Q}^c}\varphi T\|_\infty\lesssim 1 \qquad \text{and} \qquad \|P_s\ast \phi_{5\widetilde{Q}^c}\varphi T\|_{\text{Lip}_{\frac{1}{2s},t}}\lesssim 1.
	\end{equation}
	To check this, we write $P_s\ast \phi_{5\widetilde{Q}^c}\varphi T=P_s\ast \varphi T - P_s\ast \phi_{5\widetilde{Q}}\varphi T$ and since $\varphi$ is admissible for $Q$ we already have
	\begin{equation*}
		\|\nabla_x P_s\ast \varphi T\|_\infty\lesssim 1 \qquad \text{and} \qquad \|P_s\ast \varphi T\|_{\text{Lip}_{\frac{1}{2s},t}}\lesssim 1,
	\end{equation*}
	by Lemmas \ref{lem3.2.2} and \ref{lem3.2.4}. Let us observe that, if $\ell(\widetilde{Q})\leq \ell(Q)$, there exists some constant that makes $\phi_{5\widetilde{Q}}\varphi$ admissible for $5\widetilde{Q}$. On the other hand, if $\ell(\widetilde{Q})>\ell(Q)$, then there is another constant making $\phi_{5\widetilde{Q}}\varphi$ admissible for $Q$. So in any case, also by Lemmas \ref{lem3.2.2} and \ref{lem3.2.4}, we have
	\begin{equation*}
		\|\nabla_x P_s\ast \phi_{5\widetilde{Q}}\varphi T\|_\infty\lesssim 1 \qquad \text{and} \qquad \|P_s\ast \phi_{5\widetilde{Q}}\varphi T\|_{\text{Lip}_{\frac{1}{2s},t}}\lesssim 1,
	\end{equation*}
	and \eqref{eq3.2.9} follows. With this in mind and the fact that $\phi_{5\widetilde{Q}}\varphi T$ has upper $s$-parabolic growth of degree $n+1$ (use that $\phi_{5\widetilde{Q}}\varphi T$ is either admissible for $Q$ or $\widetilde{Q}$ and apply Theorem \ref{thm3.2.1}), we choose
	\begin{equation*}
		c_{\widetilde{Q}}:=\big( \partial_t^{\frac{1}{2s}}P_s\ast \phi_{5\widetilde{Q}}\varphi T \big)_{\widetilde{Q}}
	\end{equation*}
	and apply Lemma \ref{lem3.2.6} to obtain $\text{II}\lesssim \ell(\widetilde{Q})^{n+2s}$.
    
	To study $\text{I}$, we shall assume $Q\cap 6\widetilde{Q}\neq \varnothing$. Now, if $\ell(\widetilde{Q})\leq \ell(Q)$, we have that for some constant $\phi_{5\widetilde{Q}}\varphi$ is admissible for $5\widetilde{Q}$. Applying Lemma \ref{lem3.2.5} with both cubes of its statement equal to $5\widetilde{Q}$, we get
	\begin{equation*}
		\text{I} \leq \int_{5\widetilde{Q}}\big\rvert \partial_t^{\frac{1}{2s}}P_s\ast \phi_{5\widetilde{Q}}\varphi T (\ox)\big\rvert \dd \ox \lesssim \ell(\widetilde{Q})^{n+2s}.
	\end{equation*}
	If $\ell(\widetilde{Q})>\ell(Q)$, since $Q\cap 6\widetilde{Q}\neq \varnothing$ we deduce $Q\subset 8\widetilde{Q}$ and hence $\phi_{5\widetilde{Q}}\varphi$ is admissible for $Q$ for some constant. Applying again Lemma \ref{lem3.2.5} now for the cubes $Q$ and $8\widetilde{Q}$, we have
	\begin{equation*}
		\text{I} \leq \int_{8\widetilde{Q}}\big\rvert \partial_t^{\frac{1}{2s}}P_s\ast \phi_{5\widetilde{Q}}\varphi T (\ox)\big\rvert \dd \ox \lesssim \ell(\widetilde{Q})^{n+2s},
	\end{equation*}
	and we are done.
\end{proof}
Combining Lemmas \ref{lem3.2.2} and \ref{lem3.2.7} we are able to finally state the main theorem of this subsection:
\begin{thm}
	\label{thm3.2.8}
	Let $T$ be a distribution in $\mathbb{R}^{n+1}$ satisfying
	\begin{equation*}
		\|\nabla_x P_s \ast T\|_{\infty}\leq 1, \qquad \|\partial_t^{\frac{1}{2s}}P_s\ast T\|_{\ast,p_s}\leq 1.
	\end{equation*}
	Let $Q$ be an $s$-parabolic cube and $\varphi$ admissible function for $Q$. Then, $\varphi T$ is an admissible distribution, up to a constant depending on $n$ and $s$, for $\Gamma_{\Theta^s}(Q)$.
\end{thm}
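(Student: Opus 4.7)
The plan is to combine the three previous lemmas of this section, using Theorem \ref{thm3.1.1} as a bridge between the two different formulations of the hypotheses. The conclusion we need is threefold: that $\text{supp}(\varphi T) \subset Q$, that $\|\nabla_x P_s \ast \varphi T\|_\infty \lesssim 1$, and that $\|\partial_t^{\frac{1}{2s}}P_s\ast \varphi T\|_{\ast,p_s}\lesssim 1$. The first is immediate from the admissibility of $\varphi$, which forces $\text{supp}(\varphi) \subset Q$, and the third is precisely the statement of Lemma \ref{lem3.2.7}. Thus the only new work is to justify the spatial-gradient bound.

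The subtlety is that Lemma \ref{lem3.2.2} is stated under the hypotheses $\|\nabla_x P_s \ast T\|_{\infty}\leq 1$ and $\|P_s\ast T\|_{\text{Lip}_{\frac{1}{2s},t}}\leq 1$, whereas here we are given a BMO bound on $\partial_t^{\frac{1}{2s}}P_s\ast T$ instead of a pointwise Lipschitz bound in time. The remedy is to apply Theorem \ref{thm3.1.1} to the function $f := P_s \ast T$: the two hypotheses on $T$ exactly supply the bounds $\|\nabla_x f\|_\infty \lesssim 1$ and $\|\partial_t^{\frac{1}{2s}} f\|_{\ast, p_s}\lesssim 1$ that the theorem requires, and its conclusion is that $f$ is $(1,\tfrac{1}{2s})$-Lipschitz, which in particular gives $\|P_s \ast T\|_{\text{Lip}_{\frac{1}{2s},t}} \lesssim 1$. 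At this point the hypotheses of Lemma \ref{lem3.2.2} hold up to an absolute constant, so that lemma delivers $\|\nabla_x P_s \ast \varphi T\|_\infty \lesssim 1$.

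Putting everything together, $\varphi T$ satisfies all the normalization conditions defining admissibility for $\Gamma_{\Theta^s}(Q)$ up to a single absolute constant depending only on $n$ and $s$, which is what we want. I do not expect any genuine obstacle here: the proof is a one-line invocation of Theorem \ref{thm3.1.1} to reconcile the two equivalent formulations of the $(1,\tfrac{1}{2s})$-Lipschitz condition, followed by direct citation of Lemmas \ref{lem3.2.2} and \ref{lem3.2.7}. The real content of the theorem lies in those earlier lemmas, and in particular in the delicate computation carried out in Lemma \ref{lem3.2.5} that underlies the time-BMO localization.
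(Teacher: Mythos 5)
Your proposal is correct and matches the paper's route exactly: the paper itself says, just before the statement, that the theorem follows by combining Lemmas \ref{lem3.2.2} and \ref{lem3.2.7}, and it already flags (in the sentence introducing Lemma \ref{lem3.2.2}) that Theorem \ref{thm3.1.1} is what reconciles the $\text{Lip}_{\frac{1}{2s},t}$ hypothesis of that lemma with the BMO hypothesis on $\partial_t^{\frac{1}{2s}} P_s \ast T$ appearing here. Your explicit account of that bridge is the right one.
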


\section{Removable singularities. The critical dimension of \mathinhead{\Gamma_{\Theta^s}}{}}
\label{sec3.3}
The main reason to prove the localization result of the previous section is to obtain a characterization of removable sets for solutions of the $\Theta^s$-equation satisfying a $(1, \frac{1}{2s})$-Lipschitz property.
\begin{thm}
	\label{thm3.3.1}
	A compact set $E\subset \mathbb{R}^{n+1}$ is removable for Lipschitz $s$-caloric functions if and only if $\Gamma_{\Theta^s}(E)=0$.
\end{thm}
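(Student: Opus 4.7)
I would establish the two implications separately. The direction \emph{$E$ removable $\Rightarrow \Gamma_{\Theta^s}(E) = 0$} is direct: given any admissible distribution $T$ for $\Gamma_{\Theta^s}(E)$, set $f := P_s \ast T$, which is a function on $\mathbb{R}^{n+1}$ since $T$ is compactly supported in $E$. The admissibility bounds on $T$ together with Theorem \ref{thm3.1.1} show that $f$ is $(1,\tfrac{1}{2s})$-Lipschitz, and $\Theta^s f = T$ is supported in $E$, so $\Theta^s f \equiv 0$ on $\mathbb{R}^{n+1}\setminus E$. Applying the removability hypothesis with $\Omega = \mathbb{R}^{n+1}$ forces $\Theta^s f \equiv 0$, i.e.\ $T = 0$; in particular $\langle T, 1\rangle = 0$, and taking supremum over admissible $T$ yields $\Gamma_{\Theta^s}(E) = 0$.

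For the converse $\Gamma_{\Theta^s}(E) = 0 \Rightarrow E$ removable, I would fix an open $\Omega \subset \mathbb{R}^{n+1}$ and a function $f$ satisfying the Lipschitz hypotheses with $\Theta^s f = 0$ on $\Omega\setminus E$, and set $T := \Theta^s f$, viewed as a distribution on $\Omega$ with $\text{supp}(T) \subset E \cap \Omega$. Showing $T \equiv 0$ on $\Omega$ reduces to checking $\langle T, \psi\rangle = 0$ for every $\psi \in C_c^\infty(\Omega)$. Given such $\psi$, I would pick an $s$-parabolic cube $Q$ containing $\text{supp}(\psi)$ with $10 Q \subset \Omega$, and rescale $\psi$ by a sufficiently small constant $c > 0$ so that $c\psi$ is admissible for $Q$ in the sense of the admissibility definition. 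The strategy is then to conclude, via Theorem \ref{thm3.2.8} together with the localization Lemmas \ref{lem3.2.2}--\ref{lem3.2.7}, that $c\psi T$ is admissible for $\Gamma_{\Theta^s}(E)$ up to an absolute multiplicative constant: its support sits in $E \cap Q \subset E$ because $T$ vanishes on $\Omega \setminus E$, while the potential bounds come from the localization machinery. Granting this, $\Gamma_{\Theta^s}(E) = 0$ forces $c\langle T, \psi\rangle = \langle c\psi T, 1\rangle = 0$, hence $\langle T, \psi\rangle = 0$ and we are done.

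The main obstacle, and the only genuinely subtle point, is that Theorem \ref{thm3.2.8} is stated for distributions satisfying the \emph{global} bounds $\|\nabla_x P_s \ast T\|_\infty \le 1$ and $\|\partial_t^{1/(2s)} P_s \ast T\|_{\ast,p_s} \le 1$, conditions formulated in terms of the potential $P_s\ast T$, whereas we only have a function $f$ with $\Theta^s f = T$ on $\Omega$; in general $P_s\ast T$ and $f$ differ by an $s$-caloric function on $\Omega$ with no a priori global control. The natural fix is to localize $f$ before appealing to the theorem: choose a cutoff $\chi$ admissible (up to an absolute constant) for $5Q$ with $\chi \equiv 1$ on $2Q$, and set $g := \chi\bigl(f - f(\ox_Q)\bigr)$, a compactly supported Lipschitz function. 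The product rule of Lemma \ref{lem3.2.2} gives
\begin{equation*}
    \Theta^s g = \chi T + (f - f(\ox_Q))\,\Theta^s\chi - I_s(\chi, f).
\end{equation*}
Since $g$ is compactly supported, $P_s \ast \Theta^s g = g$, so the global admissibility bounds for $\Theta^s g$ reduce to Lipschitz-type bounds on $g$, which are exactly the content of Lemmas \ref{lem3.2.2}--\ref{lem3.2.7}. One then applies Theorem \ref{thm3.2.8} to $\Theta^s g$ (in place of $T$) and checks that the commutator remainder $(f - f(\ox_Q))\Theta^s\chi - I_s(\chi,f)$, a bounded function already controlled in the proof of Lemma \ref{lem3.2.2}, contributes to the pairing $\langle c\psi \,\cdot\,, 1\rangle$ only through absolute-constant-controlled terms; this transfers the admissibility from $c\psi\,\Theta^s g$ to $c\psi T$ and closes the argument.
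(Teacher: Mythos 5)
Your overall structure matches the paper's: both directions rest on the same two ingredients, Theorem~\ref{thm3.1.1} for the easy implication and the localization Theorem~\ref{thm3.2.8} for the hard one. The first direction is exactly the paper's argument. On the second direction you argue directly (``$\Gamma_{\Theta^s}(E)=0 \Rightarrow$ removable''), whereas the paper argues by contraposition: it assumes $E$ is not removable, produces a single test function $\varphi$ supported in some cube $Q$ with $4Q\subset\Omega$ and $\langle T,\varphi\rangle\neq 0$, normalizes $\varphi$ to make it admissible for $Q$, applies Theorem~\ref{thm3.2.8}, and reads off $\Gamma_{\Theta^s}(E)\gtrsim |\langle T,\varphi\rangle|>0$. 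Logically these are equivalent, but your direct version needs to treat \emph{every} $\psi\in C_c^\infty(\Omega)$, including those whose supports do not fit in a cube $Q$ with $10Q\subset\Omega$; you should add the (trivial but necessary) remark that, by a partition of unity, it is enough to test $T$ against functions supported in small cubes whose dilates stay inside $\Omega$. The contrapositive sidesteps this entirely, since one only has to find one cube on which $T\not\equiv 0$.

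The one substantive divergence is your extra localization step via $g:=\chi(f-f(\ox_Q))$. You correctly point out a genuine subtlety the paper glosses over: Theorem~\ref{thm3.2.8} is stated for distributions with $\|\nabla_x P_s\ast T\|_\infty\le 1$ and $\|\partial_t^{1/(2s)}P_s\ast T\|_{\ast,p_s}\le 1$, while here $T=\Theta^s f$ is not compactly supported (its support is $E\cup\Omega^c$) and $P_s\ast T$ need not coincide with $f$; the paper simply asserts the potential bounds for this $T$. Your fix---replacing $T$ by $\Theta^s g$ with $g$ compactly supported, so that $P_s\ast\Theta^s g=g$, and then controlling the commutator terms $(f-f(\ox_Q))\Theta^s\chi - I_s(\chi,f)$ and subtracting to recover $c\psi T$---is sound in spirit. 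But notice that this is exactly the decomposition already carried out \emph{inside} the proofs of Lemmas~\ref{lem3.2.2}, \ref{lem3.2.4}, and \ref{lem3.2.5}: those proofs only ever use the identity $\Theta^s\bigl(\varphi(P_s\ast T-c)\bigr)=\varphi T+(P_s\ast T-c)\Theta^s\varphi-I_s(\varphi,P_s\ast T)$ and the $(1,\tfrac{1}{2s})$-Lipschitz bounds on $P_s\ast T$, and the arguments go through verbatim if one replaces $P_s\ast T$ by \emph{any} function $u$ solving $\Theta^s u=T$ with the same Lipschitz bounds. So the cleanest reading of Theorem~\ref{thm3.2.8} is that its hypothesis is really ``$T=\Theta^s u$ for some $u$ with $\|\nabla_x u\|_\infty\le 1$, $\|\partial_t^{1/(2s)}u\|_{\ast,p_s}\le 1$,'' which is precisely what you have. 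Under that reading the paper's one-line application of Theorem~\ref{thm3.2.8} is legitimate, and your manual re-derivation through $g$, while correct, duplicates work the localization lemmas already do. Also note that the way you phrase the last step (``contributes to the pairing $\langle c\psi\cdot,1\rangle$ only through absolute-constant-controlled terms'') is slightly off-target: what one needs is that the commutator piece $c\psi\cdot\bigl((f-f(\ox_Q))\Theta^s\chi-I_s(\chi,f)\bigr)$ itself has $\nabla_x P_s\ast(\cdot)$ in $L^\infty$ and $\partial_t^{1/(2s)}P_s\ast(\cdot)$ in $\mathrm{BMO}_{p_s}$ with absolute bounds, so that by subtraction $c\psi T$ satisfies the same bounds and, being supported in $E$, is admissible for $\Gamma_{\Theta^s}(E)$; only then does $\Gamma_{\Theta^s}(E)=0$ force $\langle T,c\psi\rangle=0$.
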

\begin{proof}
	We consider $s<1$, since the case $s=1$ is covered in \cite[Theorem 5.3]{MPrT}.
	Let $E\subset \mathbb{R}^{n+1}$ be compact and assume that is removable for Lipschitz $s$-caloric functions. Let $T$ be admissible for $\Gamma_{\Theta^s}(E)$ and define $f:=P_s\ast T$, so that $\|\nabla_xf\|_{\infty}<\infty$, $\|\partial_t^{\frac{1}{2s}}f\|_{\ast,p_s}<\infty$ and $\Theta^s f = 0$ on $\mathbb{R}^{n+1}\setminus{E}$. By hypothesis $\Theta^s f = 0$ in $\mathbb{R}^{n+1}$ so $T\equiv 0$, and then $\Gamma_{\Theta^s}(E)=0$.
    
	Assume now that $E$ is not removable for Lipschitz $s$-caloric functions. Then, there exists $\Omega\supset E$ open set and $f:\mathbb{R}^{n+1}\to \mathbb{R}$ with
	\begin{equation*}
		\|\nabla_x f\|_{\infty}<\infty, \qquad \|\partial_t^{\frac{1}{2s}}f\|_{\ast,p_s}<\infty
	\end{equation*}
	such that $\Theta^s f = 0$ on $\Omega\setminus{E}$, but $\Theta^s f \neq 0$ on $\Omega$. Define the distribution
	\begin{equation*}
		T:=\frac{\Theta^s f}{\|\nabla_x f\|_{\infty}+\|\partial_t^{\frac{1}{2s}}f\|_{\ast,p_s}},
	\end{equation*}
	that is such that $\|\nabla_x P_s \ast T\|_{\infty}\leq 1$, $\|\partial_t^{\frac{1}{2s}} P_s \ast T\|_{\ast,p_s}\leq 1$ and $\text{supp}(T)\subset E \cup \Omega^c$. Since $T\neq 0$ in $\Omega$, there exists $Q$ $s$-parabolic cube with $4Q\subset \Omega$ so that $T\neq 0$ in $Q$. Observe that $Q\cap E \neq \varnothing$. Then, by definition, there is $\varphi$ test function supported on $Q$ with $\langle T, \varphi \rangle>0$. Consider
	\begin{equation*}
		\widetilde{\varphi}:=\frac{\varphi}{\|\varphi\|_\infty + \ell(Q)\|\nabla_x\varphi\|_\infty+\ell(Q)^{2s}\|\partial_t\varphi\|_\infty+\ell(Q)^2\|\Delta_x \varphi\|_\infty},
	\end{equation*}
	so that $\widetilde{\varphi}$ is admissible for $Q$. Apply Theorem \ref{thm3.2.8} to deduce that $\widetilde{\varphi} T$ is admissible for $\Gamma_{\Theta^s}(E)$ (up to a constant) and therefore
	\begin{align*}
		\Gamma_{\Theta^s}(E) &\gtrsim \frac{1}{\|\varphi\|_\infty + \ell(Q)\|\nabla_x\varphi\|_\infty+\ell(Q)^{2s}\|\partial_t\varphi\|_\infty+\ell(Q)^2\|\Delta_x \varphi\|_\infty} |\langle \varphi T, 1 \rangle|>0.
	\end{align*}
\end{proof}
Let us prove now that, given $E\subset \mathbb{R}^{n+1}$, its removability for Lipschitz $s$-caloric functions will be tightly related to its $s$-parabolic Hausdorff dimension:
\begin{thm}
	\label{thm3.3.2}
	For every compact set $E\subset \mathbb{R}^{n+1}$ the following hold:
	\begin{enumerate}
		\item[1.] $\Gamma_{\Theta^s}(E)\leq C\,\pazocal{H}_{\infty,p_s}^{n+1}(E)$, for some constant $C(n,s)>0$.
		\item[2.] If $\text{\normalfont{dim}}_{\pazocal{H}_{p_s}}(E)>n+1$, then $\Gamma_{\Theta^s}(E)>0$.
	\end{enumerate}
	Therefore, the critical $s$-parabolic Hausdorff dimension of $\Gamma_{\Theta^s}$ in $\mathbb{R}^{n+1}$ is $n+1$.
\end{thm}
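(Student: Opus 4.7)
For part 1), the plan is to combine an $s$-parabolic Hausdorff covering of $E$ with the localization result of Theorem \ref{thm3.2.8} and an $n+1$-growth estimate for admissible distributions. Given $\varepsilon>0$, choose $T$ admissible for $\Gamma_{\Theta^s}(E)$ with $|\langle T,1\rangle|\geq \Gamma_{\Theta^s}(E)-\varepsilon$ and a covering of $E$ by $s$-parabolic cubes $\{Q_i\}$ with $\sum_i\ell(Q_i)^{n+1}\leq \pazocal{H}^{n+1}_{\infty,p_s}(E)+\varepsilon$. I would construct a smooth partition of unity $\{\varphi_i\}$ with each $\varphi_i$ admissible (up to an absolute constant) for $2Q_i$ and $\sum_i\varphi_i\equiv 1$ on a neighborhood of $E$, so that $\langle T,1\rangle=\sum_i\langle \varphi_iT,1\rangle$ (using that $\text{supp}(T)\subset E$). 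By Theorem \ref{thm3.2.8}, each $\varphi_iT$ is admissible, up to an absolute constant, for $\Gamma_{\Theta^s}(2Q_i)$.

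The main remaining task is a growth bound: any admissible $S$ supported in an $s$-parabolic cube $Q$ satisfies $|\langle S,1\rangle|\lesssim \ell(Q)^{n+1}$. To prove it, set $g:=P_s\ast S$; by Theorem \ref{thm3.1.1}, $g$ is $(1,\tfrac{1}{2s})$-Lipschitz (up to an absolute constant). Choose $\phi\in \pazocal{C}^\infty$ with $\phi\equiv 1$ on $2Q$ and $\text{supp}(\phi)\subset 8Q$; then
\[
\langle S,1\rangle=\langle \Theta^{s}g,\phi\rangle=\int g\,\Theta^{s\ast}\phi\,\dd\pazocal{L}^{n+1}, \qquad \Theta^{s\ast}\phi=(-\Delta)^s\phi-\partial_t\phi.
\]
Since $\int\Theta^{s\ast}\phi=0$, I would subtract the constant $g(\ox_Q)$ from $g$ and split the integral over $8Q$ and its complement. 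On $8Q$, $|g-g(\ox_Q)|\lesssim \ell(Q)$ and $\|\Theta^{s\ast}\phi\|_\infty\lesssim \ell(Q)^{-2s}$, contributing $\lesssim \ell(Q)^{n+1}$. Outside $8Q$, only $(-\Delta)^s\phi$ survives, with $|(-\Delta)^s\phi(\ox)|\lesssim \ell(Q)^n|\ox-\ox_Q|_{p_s}^{-n-2s}$; paired with $|g(\ox)-g(\ox_Q)|\lesssim |\ox-\ox_Q|_{p_s}$, the resulting tail integral converges precisely because $2s>1$ (giving a spatial factor $\int_{|x|>\ell(Q)}|x|^{1-n-2s}\dd x\approx \ell(Q)^{1-2s}$) and also yields $\lesssim \ell(Q)^{n+1}$. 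Summing over $i$ and sending $\varepsilon\to 0$ gives $\Gamma_{\Theta^s}(E)\leq C\,\pazocal{H}^{n+1}_{\infty,p_s}(E)$.

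For part 2), I would use a Frostman-type argument. From $\text{dim}_{\pazocal{H}_{p_s}}(E)>n+1$ pick $\delta>0$ with $\pazocal{H}^{n+1+\delta}_{p_s}(E)>0$, and invoke Frostman's lemma in the $s$-parabolic Hausdorff setting to get a positive Borel measure $\mu$ on $E$ with $\mu(E)>0$ and $\mu(B(\ox,r))\leq r^{n+1+\delta}$ for every $\ox,r$. The scaling $P_s(\lambda x,\lambda^{2s}t)=\lambda^{-n}P_s(x,t)$ shows that both $\nabla_xP_s$ and $\partial_t^{\frac{1}{2s}}P_s$ are $s$-parabolically homogeneous of degree $-(n+1)$, yielding (via the kernel estimates already used in the paper, e.g. \cite[Thm.~2.2, Lem.~2.4]{HMP}) $|\nabla_xP_s(\ox)|+|\partial_t^{\frac{1}{2s}}P_s(\ox)|\lesssim |\ox|_{p_s}^{-(n+1)}$. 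A dyadic decomposition over $s$-parabolic annuli centered at $\ox$, combined with the growth of $\mu$ at small scales and the finiteness of $\mu(E)$ for annuli larger than the diameter of $\text{supp}(\mu)$, gives uniform bounds on $\nabla_xP_s\ast \mu$ and $\partial_t^{\frac{1}{2s}}P_s\ast \mu$. Since $L^\infty\subset \text{BMO}_{p_s}$, a suitable multiple of $\mu$ is admissible for $\Gamma_{\Theta^s,+}(E)$, whence $\Gamma_{\Theta^s}(E)\geq \Gamma_{\Theta^s,+}(E)\gtrsim \mu(E)>0$. Combining 1) and 2), the critical $s$-parabolic Hausdorff dimension of $\Gamma_{\Theta^s}$ is $n+1$.

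The hard part is the $n+1$-growth bound in part 1). The non-locality of $(-\Delta)^s$ forces careful handling of the tail of $(-\Delta)^s\phi$ outside $\text{supp}(\phi)$, and the balance between the at-most-linear growth of $g-g(\ox_Q)$ and the $|\ox-\ox_Q|_{p_s}^{-(n+2s)}$ decay of $(-\Delta)^s\phi$ is exactly what forces $s>1/2$, matching the hypothesis of the theorem; this mirrors the role played by $s>1/2$ throughout the preceding sections. Part 2), by contrast, becomes a bookkeeping exercise once the parabolic homogeneity of the two kernels is in place.
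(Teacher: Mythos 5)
Your overall strategy matches the paper's: part 1 via covering, partition of unity, localization, and a linear growth estimate; part 2 via Frostman's lemma and kernel decay. The paper actually delegates part 1 to ``proceed analogously to \cite[Theorem 5.3]{HMP},'' so your fleshed-out argument is worth comparing. Two remarks there. First, the growth bound $|\langle S,1\rangle|\lesssim\ell(Q)^{n+1}$ that you re-derive by dualizing $\Theta^s$ onto a bump $\phi$ is already available as Theorem~\ref{thm3.2.1}: applying it with $R=Q$, $\phi$ an admissible function equal to $1$ on $Q$, and $\varphi$ the localizing function gives exactly this, and the paper uses Theorem~\ref{thm3.2.1} (a cited result from \cite{HMP}) for precisely that purpose in several places. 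Your direct integration-by-parts argument is a legitimate alternative and does identify the role of $s>1/2$ correctly, but it quietly requires that the cover $\{Q_i\}$ be refined to one with bounded overlap (a Besicovitch-type step) before the normalized partition $\varphi_i=\psi_i/\sum_j\psi_j$ can be made admissible for $2Q_i$ with uniform constants; that refinement should be stated.

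The genuine gap is in part 2. You assert
\[
|\partial_t^{\frac{1}{2s}}P_s(\ox)|\lesssim |\ox|_{p_s}^{-(n+1)},
\]
justified by $s$-parabolic homogeneity of degree $-(n+1)$. Homogeneity alone does not give a $|\ox|_{p_s}^{-(n+1)}$ bound; one also needs boundedness on the unit $s$-parabolic sphere, and this \emph{fails} for the time-fractional kernel. The actual estimate from \cite[Lemma 2.4]{HMP}, which the paper uses in Lemma~\ref{lem3.2.5}, is of the form $|\partial_t^{\frac{1}{2s}}P_s(\ox)|\lesssim |x|^{-(n-2s)}|\ox|_{p_s}^{-(2s+1)}$ (with a variant for $n=1$), which blows up as $|x|\to 0$ while $|\ox|_{p_s}$ stays bounded away from zero. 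Consequently, the single dyadic decomposition into $s$-parabolic annuli that you describe does not close as written: within each annulus the integrand $|x-y|^{-(n-2s)}$ is unbounded, and a further nested decomposition in the spatial variable (using the $d$-growth of $\mu$ on thin cylinders) is needed to make the sum converge. The paper sidesteps all of this by invoking \cite[Lemma 4.2]{HMP} with $\beta=\tfrac1{2s}$, which directly yields the $\text{BMO}_{p_s}$ bound $\|\partial_t^{\frac{1}{2s}}P_s\ast\mu\|_{\ast,p_s}\lesssim 1$ (note the paper only claims BMO here, not $L^\infty$). For $\nabla_xP_s$ your estimate $|\nabla_xP_s(\ox)|\lesssim|\ox|_{p_s}^{-(n+1)}$ and the annulus argument are fine and are exactly what the paper does. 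So the conclusion of your part 2 is correct, but the stated kernel bound for the time-fractional potential is false, and the argument needs either the refined two-parameter kernel estimate with a nested decomposition, or simply the citation to \cite[Lemma 4.2]{HMP} as in the paper.
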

\begin{proof}
	Again, let us restrict ourselves to $s<1$, since the case $s=1$ is already covered in \cite[Lemma 5.1]{MPrT}.

    For the first estimate, proceed by fixing $\varepsilon>0$ and $\{A_k\}_k$ a collection of sets in $\mathbb{R}^{n+1}$ that cover $E$ such that
		\begin{equation*}
			\sum_{k=1}^\infty \text{diam}_{p_s}(A_k)^{n+1}\leq \pazocal{H}^{n+1}_{\infty,p_s}(E)+\varepsilon.
		\end{equation*}
		Now, for each $k$ let $Q_k$ an open $s$-parabolic cube centered at some point $a_k\in A_k$ with side length $\ell(Q_k)=\text{diam}_{p_s}(A_k)$, so that $E\subset \bigcup_kQ_k$. Apply the compactness of $E$ and \cite[Lemma 3.1]{HPo} to consider $\{\varphi_k\}_{k=1}^N$ a collection of smooth functions satisfying, for each $k$: $0\leq \varphi_k\leq 1$, $\text{supp}(\varphi_k)\subset 2Q_k$, $\sum_{k=1}^N\varphi_k = 1$ in $\bigcup_{k=1}^N Q_k$ and also $\|\nabla_x\varphi_k\|_\infty\leq \ell(2Q_k)^{-1}$, $\|\partial_t\varphi_k\|\leq \ell(2Q_k)^{-2s}$. Hence, by Theorem \ref{thm3.2.1}, if $T$ is any distribution admissible for $\Gamma_{\Theta^s}(E)$,
		\begin{align*}
			|\langle T, 1 \rangle |=\bigg\rvert \sum_{k=1}^N \langle T, \varphi_k \rangle \bigg\rvert  \lesssim \sum_{k=1}^N \ell(2Q_k)^{n+1} \simeq \sum_{k=1}^N\text{diam}_{p_s}(A_k)^{n+1}\leq \pazocal{H}^{n+1}_{\infty,p_s}(E)+\varepsilon.
		\end{align*}
		Since this holds for any $T$ and $\varepsilon>0$ can be arbitrarily small, the desired estimate follows.
    
	In order to prove \textit{2} we apply an $s$-parabolic version of Frostman's lemma. Let us name $d:=\text{\normalfont{dim}}_{\pazocal{H}_{p_s}}(E)$ and assume $0<\pazocal{H}_{p_s}^d(E)<\infty$ without loss of generality. Indeed, if $\pazocal{H}_{p_s}^d(E)=\infty$, apply an $s$-parabolic version of \cite[Theorem 4.10]{F} to construct a compact set $\widetilde{E}\subset \mathbb{R}^{n+1}$ with $\widetilde{E}\subset E$ and $0<\pazocal{H}_{p_s}^{d}(\widetilde{E})<\infty$. On the other hand, if $\pazocal{H}_{p_s}^d(E)=0$ apply the same reasoning with $d':=(d+n+1)/2$. In any case, by Frostman's lemma we shall then consider a non trivial positive measure $\mu$ with $\text{supp}(\mu)\subset E$ satisfying $\mu(B(\ox,r))\leq r^{d}$, for all $\ox\in\mathbb{R}^{n+1}$ and all $r>0$. Observe that if we prove
	\begin{equation*}
		\big\|\nabla_x P_s\ast \mu\big\|_{\infty}\lesssim 1\hspace{0.5cm} \text{and}\hspace{0.5cm} \big\|\partial^{\frac{1}{2s}}_t P_s\ast \mu\big\|_{\ast,p_s}\lesssim 1,
	\end{equation*}
	we will be done, since this would imply $\Gamma_{\Theta^s}(E)\geq \Gamma_{\Theta^s}(\widetilde{E}) \gtrsim \langle \mu, 1 \rangle = \mu(\widetilde{E})>0$. The bound for $\partial^{\frac{1}{2s}}_t P_s\ast \mu$ follows directly from \cite[Lemma 4.2]{HeMPr} with $\beta:=\frac{1}{2s}$. To prove that of $\nabla_x P_s\ast \mu$, use \cite[Lemma 2.2]{MPr} to deduce that for any $\ox\in\mathbb{R}^{n+1}$,
	\begin{equation*}
		|\nabla_x P_s\ast \mu(\ox)|\lesssim \int_{E}\frac{\text{d}\mu(\oz)}{|\ox-\oy|_{p_s}^{n+1}}\lesssim \text{diam}_{p_s}(E)^{d-(n+1)}\lesssim 1,
	\end{equation*}
	where we have split the previous domain into annuli and used that $\mu$ presents upper $s$-parabolic growth of degree $d>n+1$.
\end{proof}
We proceed by providing a result regarding the capacity of subsets of $\pazocal{H}^{n+1}_{p_s}$-positive measure of regular $\text{Lip}(1,\frac{1}{2s})$ graphs. To proceed, let us introduce the following operator: for a given $\mu$, a real compactly supported Borel regular measure with upper $s$-parabolic growth of degree $n+1$, we define the operator $\pazocal{P}_{\mu}^{s}$ acting on elements of $L^1_{\text{loc}}(\mu)$ as
\begin{equation*}
	\pazocal{P}_{\mu}^{s}f(\ox):=\int_{\mathbb{R}^{n+1}}\nabla_x P_s(\ox-\oy)f(\oy)\text{d}\mu(\oy), \hspace{0.5cm} \ox\notin \text{supp}(\mu).
\end{equation*}
In the particular case in which $f$ is the constant function $1$ we write
\begin{equation*}
	\pazocal{P}^s\mu(\ox):=\pazocal{P}^s_{\mu}1 (\ox).
\end{equation*}
It is clear that the previous expression is defined pointwise on $\mathbb{R}^{n+1}\setminus{\text{supp}(\ox)}$. We also introduce the truncated version of $\pazocal{P}^s_{\mu}$,
\begin{equation*}
	\pazocal{P}^s_{\mu,\varepsilon}f(\ox):=\int_{|\ox-\oy|>\varepsilon}\nabla_x P_s(\ox-\oy)f(\oy)\text{d}\mu(\oy),\hspace{0.5cm} \ox\in \mathbb{R}^{n+1}, \; \varepsilon>0.
\end{equation*}
For a given $1\leq p \leq \infty$, we will say that $\pazocal{P}^s_\mu f$ belongs to $L^p(\mu)$ if the $L^p(\mu)$-norm of the truncations $\|\pazocal{P}^s_{\mu,\varepsilon}f\|_{L^p(\mu)}$ is  uniformly bounded on $\varepsilon$, and we write
	\begin{equation*}
		\|\pazocal{P}^s_\mu f\|_{L^p(\mu)}:=\sup_{\varepsilon>0}\|\pazocal{P}^s_{\mu,\varepsilon}f\|_{L^p(\mu)}
	\end{equation*}
	We will say that the operator $\pazocal{P}^s_\mu$ is bounded on $L^p(\mu)$ if the operators $\pazocal{P}^s_{\mu,\varepsilon}$ are bounded on $L^p(\mu)$ uniformly on $\varepsilon$, and we equally set
	\begin{equation*}
		\|\pazocal{P}^s_\mu\|_{L^p(\mu)\to L^p(\mu)}:=\sup_{\varepsilon>0}\|\pazocal{P}^s_{\mu,\varepsilon}\|_{L^p(\mu)\to L^p(\mu)}.
	\end{equation*}
We also introduce the notation used for maximal operators
\begin{align*}
\pazocal{P}^s_{\ast,\mu}f(\ox)&:=\sup_{\varepsilon>0}|\pazocal{P}^s_{\mu,\varepsilon}f(\ox)|.
\end{align*}
The same definitions apply for the conjugate operator $\pazocal{P}^{s,\ast}$, associated with the conjugate kernel $P_s^\ast (\ox):=P_s(-\ox)$.

We denote by $\Sigma_{n+1}^s(E)$ the collection of positive Borel measures supported on $E$ with upper $s$-parabolic growth of degree $n+1$ with constant 1 and define the auxiliary capacity:
\begin{equation}
    \label{new_eq3.3.1}
	\widetilde{\Gamma}_{\Theta^s,+}(E):=\sup \mu(E),
\end{equation}
where the supremum is taken over all measures $\mu\in\Sigma_{n+1}^s(E)$ such that
\begin{equation*}
	\|\pazocal{P}^{s}\mu\|_\infty \leq 1, \qquad \|\pazocal{P}^{s,\ast}\mu\|_\infty \leq 1.
\end{equation*}
We aim at proving the following result, analogous to \cite[Theorem 5.5]{MPrT}:
\begin{thm}
	\label{thm3.3.3}
	Let $E\subset \mathbb{R}^{n+1}$ be a compact set. Then, for each $s\in(1,2,1]$,
	\begin{equation*}
		\widetilde{\Gamma}_{\Theta^s,+}(E)\lesssim \Gamma_{\Theta^s,+}(E) \approx \sup\big\{ \mu(E)\,:\, \mu\in\Sigma_{n+1}^s(E), \, \|\pazocal{P}^{s}\mu\|_\infty \leq 1 \big\}.
	\end{equation*}
	Moreover,
	\begin{equation*}
		\widetilde{\Gamma}_{\Theta^s,+}(E) \approx \sup\big\{ \mu(E)\,:\, \mu\in\Sigma_{n+1}^s(E), \, \|\pazocal{P}^{s}_{\mu}\|_{L^2(\mu)\to L^2(\mu)} \leq 1 \big\}.
	\end{equation*}
\end{thm}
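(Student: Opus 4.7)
The plan follows the structure of \cite[Theorem 5.5]{MPT}, adapting the arguments to the $s$-parabolic setting. First, I would establish the middle equivalence $\Gamma_{\Theta^s,+}(E)\approx \sup\{\mu(E):\mu\in\Sigma_{n+1}^s(E),\,\|\pazocal{P}^{s}\mu\|_\infty\leq 1\}$. The direction $\lesssim$ relies on Theorem \ref{thm3.2.1}: any $\mu$ admissible for $\Gamma_{\Theta^s,+}(E)$ automatically has $s$-parabolic $(n+1)$-growth with an absolute constant, so up to normalization $\mu\in\Sigma_{n+1}^s(E)$ with $\|\pazocal{P}^s\mu\|_\infty=\|\nabla_x P_s\ast\mu\|_\infty\leq 1$. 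For the reverse direction, I need to show that any $\mu\in\Sigma_{n+1}^s(E)$ satisfying $\|\pazocal{P}^s\mu\|_\infty\leq 1$ is admissible (up to a constant) for $\Gamma_{\Theta^s,+}(E)$; the remaining bound $\|\partial_t^{\frac{1}{2s}}P_s\ast\mu\|_{\ast,p_s}\lesssim 1$ follows from the $(n+1)$-growth of $\mu$ via \cite[Lemma 4.2]{HMP} with $\beta=\frac{1}{2s}$, exactly as in the proof of Theorem \ref{thm3.3.2}.

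Next, I would dispatch $\widetilde{\Gamma}_{\Theta^s,+}(E)\lesssim \Gamma_{\Theta^s,+}(E)$, which is essentially automatic: any $\mu$ admissible for $\widetilde{\Gamma}_{\Theta^s,+}(E)$ already lies in $\Sigma_{n+1}^s(E)$ and satisfies $\|\pazocal{P}^s\mu\|_\infty\leq 1$, so the previous paragraph puts $\mu(E)$ under the control of $\Gamma_{\Theta^s,+}(E)$; the additional hypothesis $\|\pazocal{P}^{s,\ast}\mu\|_\infty\leq 1$ is simply not used.

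The substantive step is the last equivalence $\widetilde{\Gamma}_{\Theta^s,+}(E)\approx\sup\{\mu(E):\mu\in\Sigma_{n+1}^s(E),\,\|\pazocal{P}^{s}_\mu\|_{L^2(\mu)\to L^2(\mu)}\leq 1\}$, which I would obtain from a $T(1)$-type theorem for non-doubling measures in the $s$-parabolic setting. For $\lesssim$, given $\mu$ admissible for $\widetilde{\Gamma}_{\Theta^s,+}(E)$, I would verify that $\nabla_x P_s$ is a (spatially) antisymmetric Calder\'on--Zygmund kernel of homogeneous dimension $n+1$ with respect to $\text{dist}_{p_s}$, using the size and regularity bounds of \cite[Theorem 2.2]{HMP}. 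Combined with the $(n+1)$-growth of $\mu$ and the hypotheses $\|\pazocal{P}^s\mu\|_\infty, \|\pazocal{P}^{s,\ast}\mu\|_\infty\leq 1$ (which in particular give $\text{BMO}_{p_s}(\mu)$ control), the Nazarov--Treil--Volberg $T(1)$ theorem yields $\|\pazocal{P}^s_\mu\|_{L^2(\mu)\to L^2(\mu)}\lesssim 1$. For the reverse direction, starting from $\|\pazocal{P}^s_\mu\|_{L^2(\mu)\to L^2(\mu)}\leq 1$, I would apply a Cotlar-type inequality for non-doubling measures to bound the maximal operators $\pazocal{P}^s_{\ast,\mu}$ and $\pazocal{P}^{s,\ast}_{\ast,\mu}$ on $L^2(\mu)$; Chebyshev then produces a subset $F\subset E$ with $\mu(F)\geq\mu(E)/2$ on which both $\pazocal{P}^s_{\ast,\mu}\chi_E$ and $\pazocal{P}^{s,\ast}_{\ast,\mu}\chi_E$ are pointwise bounded by an absolute constant. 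The measure $\nu:=c\,\mu|_F$, for a suitable small absolute $c>0$, then belongs to $\Sigma_{n+1}^s(E)$ and is admissible for $\widetilde{\Gamma}_{\Theta^s,+}(E)$, with $\nu(E)\gtrsim\mu(E)$.

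The main obstacle is precisely this invocation of non-doubling $T(1)$ machinery and Cotlar's inequality in the anisotropic $s$-parabolic geometry: one must check that the NTV/Tolsa axiomatic framework (Calder\'on--Zygmund decomposition, good-lambda inequalities, suppressed operators) carries through verbatim when Euclidean cubes are replaced by $s$-parabolic ones and when the kernel has mixed spatial/temporal decay. The kernel estimates of \cite[Theorem 2.2]{HMP} and \cite[Lemma 2.2]{MP} supply all the required size and smoothness bounds, so the remaining work is the careful but essentially routine transcription of the NTV arguments into the $s$-parabolic setting, parallel to adaptations already used in earlier sections of the paper.
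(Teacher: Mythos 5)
Your treatment of $\Gamma_{\Theta^s,+}(E)\approx\Gamma_1:=\sup\{\mu(E):\mu\in\Sigma_{n+1}^s(E),\,\|\pazocal{P}^s\mu\|_\infty\leq1\}$, of $\widetilde{\Gamma}_{\Theta^s,+}(E)\leq\Gamma_1$, and of the direction $\widetilde{\Gamma}_{\Theta^s,+}(E)\lesssim\Gamma_2$ all match the paper's line of argument: Theorem~\ref{thm3.2.1} for the growth, \cite[Lemma 4.2]{HMP} with $\beta=\frac{1}{2s}$ for the missing BMO bound, and a $T(1)$/$T(b)$ theorem for non-doubling measures on the $s$-parabolic space. (The paper specifically invokes the Hytönen--Martikainen $Tb$ theorem with $b=1$, which is formulated for geometrically doubling metric spaces, and checks the weak boundedness property on balls with thin boundaries; this is the ready-made version of the adaptation you envisage of the NTV machinery.)

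The step that fails is your proposed proof of $\Gamma_2\lesssim\widetilde{\Gamma}_{\Theta^s,+}(E)$. Cotlar plus Chebyshev does produce $F\subset E$ with $\mu(F)\geq\mu(E)/2$ on which $\pazocal{P}^s_{\ast,\mu}\chi_E$ and $\pazocal{P}^{s,\ast}_{\ast,\mu}\chi_E$ are pointwise bounded. But the measure $\nu:=c\,\mu|_F$ has truncated potential $\pazocal{P}^s_{\nu,\varepsilon}1=c\,\pazocal{P}^s_{\mu,\varepsilon}\chi_F$, and you only control $\pazocal{P}^s_{\mu,\varepsilon}\chi_E$ on $F$. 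The difference is $\pazocal{P}^s_{\mu,\varepsilon}\chi_{E\setminus F}$, the potential of the discarded mass evaluated on $F$, and there is no reason this should be bounded: $\mu(E\setminus F)$ being at most $\mu(E)/2$ is a statement about total mass, not about the pointwise potential at points of $F$, which can blow up if $E\setminus F$ accumulates near a point of $F$. This is precisely why a set-theoretic cutoff is not enough and one passes to a \emph{function} cutoff $h:E\to[0,1]$ constructed by the Davie--{\O}ksendal dualization. The paper's route is: from $L^2(\mu)$-boundedness deduce weak $(1,1)$ bounds for $\pazocal{P}^s_\varepsilon$ and $\pazocal{P}^{s,\ast}_\varepsilon$ from $M(\mathbb{R}^{n+1})$ to $L^{1,\infty}(\mu)$ (using the $s$-parabolic Besicovitch covering theorem), then dualize these weak-type bounds à la Davie--{\O}ksendal and combine with Cotlar to obtain $h$ with $\int h\,\dd\mu\gtrsim\mu(E)$, $\|\pazocal{P}^s_\mu h\|_\infty\lesssim1$ and $\|\pazocal{P}^{s,\ast}_\mu h\|_\infty\lesssim1$. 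Then $h\mu$ is the admissible measure; a characteristic-function restriction will not do.
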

Previous to that, let us verify the following lemma, which follows from the growth estimates proved for the kernel $\nabla_xP_s$ in \cite[Theorem 2.2]{HeMPr} and analogous arguments to those in \cite[Lemma 5.4]{MatPa}, for example:

\begin{lem}
	\label{lem3.3.4}
	Let $\mu$ be a real Borel measure with compact support and upper $s$-parabolic growth of degree $n+1$ with $\|\pazocal{P}^{s}\mu\|_{\infty}\leq 1$. Then, there is $\kappa>0$, constant depending on $n$ and $s$, so that
	\begin{equation*}
		\pazocal{P}^{s}_{\ast}\mu (\ox)\leq \kappa, \hspace{0.5cm} \forall\, \ox\in \mathbb{R}^{n+1}.
	\end{equation*}
\end{lem}

\begin{proof}
	Let $|\mu|$ denote the variation of $\mu$. which corresponds to $|\mu|=\mu^++\mu^-$, where $\mu^+,\mu^-$ are the positive and negative variations of $\mu$ respectively, defined as the set functions
	\begin{align*}
		\mu^+(B)&=\sup\{\mu(A)\;:\; A\in\pazocal{B}(\mathbb{R}^{n+1}), A\subset B\},\\
		\mu^-(B)&=-\inf\{\mu(A)\;:\; A\in\pazocal{B}(\mathbb{R}^{n+1}), A\subset B\}.
	\end{align*}
	It is known that $|\mu|$ is a positive measure \cite[Theorem 6.2]{Ru} with $\mu\ll|\mu|$. In fact, there exists an $L^1_{\text{loc}}(\mathbb{R}^{n+1})$ function $g$ with $g(\ox)=\pm 1$ such that $\mu=g|\mu|$ \cite[Theorem 6.12]{Ru}. It is clear that $|\mu|$ still satisfies the same upper $s$-parabolic growth condition as $\mu$.
    
	Let us fix $0<\varepsilon<1$ and $\ox=(x,t)\in \mathbb{R}^{n+1}$ and notice that for some $0<\delta<2s-1$,
	\begin{align*}
		\int_{B(\ox,\varepsilon/4)}\bigg(\int_{B(\ox,\varepsilon)}&\frac{\text{d}|\mu|(\oy)}{|\oz-\oy|_{p_s}^{n+1}}\bigg)\dd \oz \leq \int_{B(\ox,\varepsilon)}\bigg(\int_{B(\oy,2\varepsilon)} \frac{\dd \oz}{|\oz-\oy|^{n+1}_{p_s}} \bigg)\text{d}|\mu|(\oy)\\
		&\hspace{-2.75cm}\leq \int_{B(\ox,\varepsilon)}\bigg(\int_{B_1(y,2\varepsilon)} \frac{\dd z}{|z-y|^{n-\delta}} \int_{-2^{2s}\varepsilon^{2s}}^{2^{2s}\varepsilon^{2s}}\frac{du}{u^{\frac{1+\delta}{2s}}}\bigg)\text{d}|\mu|(\oy) \lesssim \varepsilon^{\delta}\varepsilon^{2s-1-\delta}|\mu|(B(\ox,\varepsilon))\lesssim \varepsilon^{n+2s}.
	\end{align*}
	By the first estimate of \cite[Theorem 2.2]{HeMPr} this implies, in particular, that we can find some $\oz\in B(\ox,\varepsilon/4)$ such that $|\pazocal{P}^{s}\mu(\oz)|\leq \|\pazocal{P}^{s}\mu\|_{\infty} \leq 1$ and satisfying
	\begin{equation*}
		|\pazocal{P}_{\mu}^{s} \chi_{B(\ox,\varepsilon)} (\oz)|\leq C_2,
	\end{equation*}
	for some positive constant $C_2$. Therefore, we obtain
	\begin{align*}
		\big\rvert \pazocal{P}_{\varepsilon}^{s}\mu(\ox)- \pazocal{P}^{s}\mu(\oz)\big\rvert &\leq \big\rvert \pazocal{P}_{\varepsilon}^{s}\mu(\ox)-\pazocal{P}_{\varepsilon}^{s}\mu(\oz) \big\rvert +C_2\\
		&\leq \int_{|\ox-\oy|_{p_s}>\frac{\varepsilon}{2}}\big\rvert \nabla_xP_s(\ox-\oy)-\nabla_xP_s(\oz-\oy) \big\rvert \text{d}|\mu|(\oy) +C_2   
	\end{align*}
	Applying the last estimate of \cite[Theorem 2.2]{HeMPr} we get
	\begin{align*}
		\int_{|\ox-\oy|_{p_s}>\frac{\varepsilon}{2}}\big\rvert \nabla_xP_s(\ox-\oy)-\nabla_xP_s(\oz-\oy) \big\rvert \text{d}|\mu|(\oy)\lesssim \varepsilon \int_{|\ox-\oy|_{p_s}>\frac{\varepsilon}{2}}\frac{\dd |\mu|(\oy)}{|\ox-\oy|_{p_s}^{n+2}}.
	\end{align*}
	Splitting the domain of integration into $s$-parabolic annuli:
	\begin{align*}
		A_j:=\big\{\oy\,:\, 2^{j-1}\varepsilon \leq |\ox-\oy|_{p_s} \leq 2^{j}\varepsilon \big\}, \quad j\geq 0,
	\end{align*}
	and using the growth of $|\mu|$ we obtain, for some positive constant $C_1$,
	\begin{align*}
		\varepsilon\int_{|\ox-\oy|_{p_s}>\frac{\varepsilon}{2}}\frac{\dd |\mu|(\oy)}{|\ox-\oy|_{p_s}^{n+2}}\leq \varepsilon\sum_{j=0}^\infty \int_{A_j} \frac{\dd |\mu|(\oy)}{|\ox-\oy|_{p_s}^{n+2}}\lesssim \varepsilon \sum_{j=0}^\infty \frac{(2^j\varepsilon)^{n+1}}{(2^{j-1}\varepsilon)^{n+2}}\leq C_1.
	\end{align*}
	Thus, we have established the bound
	\begin{align*}
		|\pazocal{P}_{\varepsilon}^{s}\mu(\ox)-\pazocal{P}^{s}\mu(\oz)|&\leq C_1+C_2,
	\end{align*}
	so setting $\kappa:=C_1+C_2+\|\pazocal{P}^{s}\mu\|_{\infty}$ and using that by hypothesis $\|\pazocal{P}^{s}\mu\|_{\infty}\leq 1$, we get the desired inequality.
\end{proof}

\begin{proof}[Proof of Theorem \ref{thm3.3.3}]
	Denote
	\begin{align*}
		\Gamma_1& := \sup\{\mu(E):\mu\in\Sigma_{n+1}^s(E),\,\|\pazocal{P}^{s}\mu\|_{\infty}\leq1\}, \\
		\Gamma_2 & :=\sup\{\mu(E):\mu\in\Sigma_{n+1}^s(E),\,\|\pazocal{P}^{s}_{\mu}\|_{L^2(\mu)\to L^2(\mu)}\leq1\}.
	\end{align*}
	It is clear that $\widetilde{\Gamma}_{\Theta^s,+}(E)\leq \Gamma_1$. It is also clear that Theorem \ref{thm3.2.1} implies $\Gamma_1\gtrsim \Gamma_{\Theta^s,+}(E)$, and that the converse estimate follows from \cite[Lemma 4.2]{HeMPr} applied with $\beta:=\frac{1}{2s}$.
	
	To prove that $\widetilde{\Gamma}_{\Theta^s,+}(E)\approx \Gamma_2$ we argue as in \cite[Theorem 5.5]{MPrT}. Take $\mu\in\Sigma_{n+1}^s(E)$ satisfying
	$\widetilde{\Gamma}_{\Theta^s,+}(E)\leq 2\mu(E)$ and $\|\pazocal{P}^{s}\mu\|_{\infty}\leq1$, $\|\pazocal{P}^{s,\ast}\mu\|_{\infty}\leq1$. By Lemma \ref{lem3.3.4} we get, uniformly on $\varepsilon>0$,
	\begin{equation}
		\label{eq3.3.1}
		\|\pazocal{P}_{\varepsilon}^{s}\mu\|_{L^\infty(\mu)} \lesssim 1,\qquad \|\pazocal{P}_{\varepsilon}^{s,\ast}\mu\|_{L^\infty(\mu)} \lesssim 1,
	\end{equation}
	The boundedness of $\pazocal{P}_{\mu}^{s}$ in $L^2(\mu)$ will follow from a 
	$Tb$ theorem of Hyt\"onen and Martikainen \cite[Theorem 2.3]{HyMa} for non-doubling measures in geometrically doubling spaces, such as the $s$-parabolic space. We shall apply the previous theorem with $b=1$. Taking into account \eqref{eq3.3.1}, to ensure that $\pazocal{P}_{\mu}^s$ is bounded in $L^2(\mu)$, by \cite[Theorem 2.3]{HyMa} it is enough to verify that the weak boundedness property holds for $s$-parabolic balls with thin boundary. An $s$-parabolic ball of radius $r_B$ has $A$-thin boundary if
	\begin{equation}\label{eq3.3.3}
		\mu\big\{\ox:\text{dist}_{p_s}(\ox,\partial B)\le \lambda r_B\big\}\le A\,\lambda\mu(2B)\quad \mbox{ $\forall\lambda\in(0,1)$.}
	\end{equation} 
	Hence, we need to prove that, for some fixed $A>0$ and any $B\subset\mathbb{R}^{n+1}$ $s$-parabolic ball with $A$-thin boundary,
	\begin{equation}
		\label{eq3.3.2}
		|\langle \pazocal{P}^{s}_{\mu,\varepsilon}\chi_B, \chi_B\rangle| \le C\mu(2B),\quad\mbox{ uniformly on $\varepsilon>0$}.
	\end{equation}
	
	To prove \eqref{eq3.3.2}, consider a smooth function $\varphi$ compactly supported on $2B$ with $\varphi\equiv1$ on $B$ and write
	$$
	|\langle \pazocal{P}^{s}_{\mu,\varepsilon}\chi_B,\chi_B\rangle|\le \int_B|\pazocal{P}^{s}_{\mu,\varepsilon}\varphi|\dd \mu+\int_B|\pazocal{P}_{\mu,\varepsilon}^s(\varphi-\chi_B)|\dd\mu.
	$$
	Since $\mu$ presents $n+1$ upper $s$-parabolic growth and $\|\pazocal{P}^{s}\mu\|_{\infty  }\leq1$, by \cite[Lemma 4.2]{HeMPr} and the localization Theorem \ref{thm3.2.8} we have $\|\pazocal{P}^{s}_{\mu}\varphi\|_{\infty}\leq1$, which in turn implies that $\|\pazocal{P}^{s}_{\mu,\varepsilon}\varphi\|_{\infty}\leq1$ uniformly on $\varepsilon>0$. Therefore, the first integral on the right side of the above inequality
	is bounded by $C\mu(B)$. To estimate the second term we will use that $B$ has a thin boundary and the growth estimates of \cite[Theorem 2.2]{HeMPr}. We compute:
	\begin{align*}
		\int_B|\pazocal{P}^{s}_{\mu,\varepsilon}(\varphi-\chi_B)|\dd\mu &\lesssim \int_{2B\setminus B}\int_B\frac{\dd \mu(y)}{|x-y|_{p_s}^{n+1}}\dd \mu(x)\\
		&
		\le \sum_{j\ge 0}\int_{ \big\{\frac{r_B}{2^{j}}\leq \text{dist}_{p_s}(x,\partial B) \leq \frac{r_B}{2^{j-1}}\big\}\setminus{B}}\int_B\frac{\dd \mu(y)}{|x-y|_{p_s}^{n+1}}\dd \mu(x).
	\end{align*}
	Given $j$  and $x\notin B$ with $2^{-j}r_B\leq\text{dist}_{p_s}(x,\partial B)\leq2^{-j+1}r_B$, since $\mu\in \Sigma_{n+1}^s(E)$ we get 
	\begin{align*}
		\int_B\frac{d\mu(y)}{|x-y|_{p_s}^{n+1}}&\leq\sum_{k=-1}^{k=j}\int_{\frac{r_B}{2^{k+1}}\leq|x-y|_{p_s}\leq \frac{r_B}{2^k}} 
		\frac{\dd\mu(y)}{|x-y|_{p_s}^{n+1}}\lesssim \sum_{k=-1}^{k=j}\frac{\mu(B(x,2^{-k}r_B))}{(2^{-k}r_B)^{n+1}}\lesssim j+2. 
	\end{align*}
	Therefore, by \eqref{eq3.3.3}
	\begin{align*}
		\int_B|\pazocal{P}^{s}_{\mu,\varepsilon}(\varphi-\chi_B)|\dd\mu&\lesssim 
		\sum_{j\ge 0}(j+2)\,\mu(\{x: 2^{-j}r_B\leq\text{dist}_{p_s}(x,\partial B)\leq 2^{-j+1}r(B)\})\\
		&\lesssim \sum_{j\ge 0}\frac{j+2}{2^j}\mu(2B) \lesssim \mu(2B).
	\end{align*}
	So \eqref{eq3.3.2} holds and  
	$\pazocal{P}_{\mu}^{s}$  satisfies
	$\|\pazocal{P}_{\mu}^{s}\|_{L^2(\mu)\to L^2(\mu)}\lesssim1$. Therefore $\Gamma_2\gtrsim \widetilde{\Gamma}_{\Theta^s,+}(E)$.
    
	To prove the upper estimate, take $\mu\in\Sigma_{n+1}^s(E)$ with  $\|\pazocal{P}^{s}_{\mu}\|_{L^2(\mu)\to L^2(\mu)}\leq 1$
	and $\Gamma_2\leq 2\mu(E)$. The $L^2(\mu)$ boundedness of $\pazocal{P}^{s}_{\mu}$ ensures that $\pazocal{P}^{s}$ and $\pazocal{P}^{s,\ast}$ are bounded from the space of finite signed measures $M(\mathbb{R}^{n+1})$ to $L^{1,\infty}(\mu)$. In other words, there exists a constant $C>0$ so that, given any $\nu\in M(\mathbb{R}^{n+1})$, for any $\varepsilon>0$ and $\lambda>0$,
	\begin{equation*}
		\mu\big(\big\{x\in\mathbb{R}^{n+1}:|\pazocal{P}_{\varepsilon}^{s} \nu(x)|>\lambda \big\}\big) \leq C\,\frac{\|\nu\|}\lambda,
	\end{equation*} 
	and the same replacing $\pazocal{P}_{\varepsilon}^{s}$ by $\pazocal{P}^{s,\ast}_\varepsilon$.
	The reader can consult a proof of the latter in \cite[Theorem 2.16]{T} (to apply the previous arguments, it is used that the Besicovitch covering theorem with respect to $s$-parabolic balls is valid. Otherwise, the reader may also consult \cite[Theorem 5.1]{NTrVo}.) From this point on, by a well known dualization of these estimates (essentially due to Davie and Øksendal \cite{DaØ}, see \cite[Ch.VII, Theorem 23]{C} for a precise statement) and an application of Cotlar's inequality (see \cite[Theorem 2.18]{T}, for example), we deduce the existence of a function $h:E\to [0,1]$ so that
	\begin{equation*}
		\mu(E)\leq C\,\int h\dd\mu,\quad \; \|\pazocal{P}_{\mu}^sh\|_{\infty}\leq 1,\quad \; \|\pazocal{P}_{\mu}^{s,\ast}h\|_{\infty}\leq 1.
	\end{equation*}
	Therefore, $\widetilde{\Gamma}_{\Theta^s,+}(E)\geq \int h\dd\mu\approx \mu(E)\approx \Gamma_2,$ and we are done with the proof.
\end{proof}

Applying the previous result and arguing as in \cite[Example 5.6]{MPrT}, we get that any subset of $\pazocal{H}_{p_s}^{n+1}$-positive measure of the graph of a $\text{Lip}(1,\frac{1}{2s})$ function is not removable. It is not clear, however, if for $s<1$ such an object exists.

\subsection{Existence of removable sets with positive \mathinhead{\pazocal{H}_{p_s}^{n+1}}{} measure}
\label{subsec3.3.1}

We would like to carry out a similar study of the capacity introduced in \cite[\textsection 4]{MPr} for  typical corner-like Cantor sets, but in the current $s$-Lipschitz context. Let us remark that in \cite{MPr} the critical dimension of the capacity in $\mathbb{R}^{n+1}$ is $n$, and that the ambient space is endowed with the usual Euclidean distance.

The first question to ask is if it suffices to consider the same corner-like Cantor set of the aforementioned reference, but now consisting on the intersection of successive families of $s$-parabolic cubes. If the reader is familiar with \cite[\textsection 6]{MPrT}, he or she might anticipate that the answer to the previous question is negative. Let us motivate why this is the case.

Recall that for a given sequence $\lambda:=(\lambda_k)_k, \, 0< \lambda_k < 1/2$, we define its associated Cantor set $E\subset \mathbb{R}^{n+1}$ by the following algorithm: set $Q^0:=[0,1]^{n+1}$ the unit cube of $\mathbb{R}^{n+1}$ and consider $2^{n+1}$ disjoint (Euclidean) cubes inside $Q^0$ of side length $\ell_1:=\lambda_1$, with sides parallel to the coordinate axes and such that each cube contains a vertex of $Q^0$. Continue this same process now for each of the $2^{n+1}$ cubes from the previous step, but now using a contraction factor $\lambda_2$. That is, we end up with $2^{2(n+1)}$ cubes with side length $\ell_2:=\lambda_1\lambda_2$. Proceeding inductively we have that at the $k$-th step of the iteration we encounter $2^{k(n+1)}$ cubes, that we denote $Q_j^k$ for $1\leq j \leq 2^{k(n+1)}$, with side length $\ell_k:=\prod_{j=1}^k \lambda_j$. We will refer to them as cubes of the $k$-\textit{th generation}. We define
\begin{equation*}
	E_k=E(\lambda_1,\ldots,\lambda_k):=\bigcup_{j=1}^{2^{k(n+1)}}Q_j^k,
\end{equation*}
and from the latter we obtain the Cantor set associated with $\lambda$,
\begin{equation*}
	E=E(\lambda):=\bigcap_{k=1}^\infty E_k.
\end{equation*}
If we chose $\lambda_j=2^{-(n+1)/n}$ for every $j$, we would recover the particular Cantor set presented in \cite[\textsection 5]{MPr}. The previous choice is so particular that ensures
\begin{equation*}
	0<\pazocal{H}^n(E) \simeq \pazocal{H}_{p_{1/2}}^n(E) <\infty.
\end{equation*}
If $\#(E_k)$ is the number of cubes of $E_k$, the above property followed, in essence, from
\begin{equation}
	\label{eq2.2.1}
	\#(E_k)\, \ell_k^{n} = 2^{k(n+1)}\, \ell_k^{n}  = 1, \;\; \forall k \geq 1.
\end{equation}
If we were to obtain such critical value of $\lambda_j$ in the $s$-parabolic setting, taking into account the critical dimension of $\Gamma_{\Theta^s}$, it should be such that
\begin{equation*}
	0<\pazocal{H}^{n+1}_{p_s}(E)<\infty.
\end{equation*}
So if we directly consider the analog of the previous corner-like Cantor set, but made up of $s$-parabolic cubes, we should rewrite \eqref{eq2.2.1} as
\begin{equation*}
	2^{k(n+1)}\cdot \ell_k^{n+1} = 1, \;\; \forall k \geq 1,
\end{equation*}
meaning that the corresponding critical value of $\lambda_j$ has to be $1/2$, that is not admissible. Another reason that suggests that working with an $s$-parabolic version of the corner-like Cantor set could not be the best choice, is that it becomes too \textit{small} in an $s$-parabolic Hausdorff-dimensional sense. Indeed, if we assume that there is $\tau_0$ so that $\lambda_j\leq \tau_0 <1/2,\, \forall j$, for any fixed $0<\varepsilon\ll 1$ we may choose a generation $k\gg 1$ so that
\begin{equation*}
	\pazocal{H}^{n+1}_{\varepsilon,p_s}(E)\leq \pazocal{H}^{n+1}_{\varepsilon,p_s}(E_k) \lesssim 2^{k(n+1)}\ell_{k}^{n+1}\leq (2\tau_0)^{k(n+1)}\xrightarrow[k\to\infty]{} 0,
\end{equation*}
that implies $\Gamma_{\Theta^s}(E)=0$, by Theorem \ref{thm3.3.2}.

Hence, it is clear that in order to obtain a potentially non-removable Cantor set $E$, one has to \textit{enlarge} it. One way to do it (motivated by \cite[\textsection 6]{MPrT}) is as follows: let us fix $s\in(1/2,1]$ and choose what we call the \textit{non-self-intersection} parameter $d\in\mathbb{Z}_+$, the minimum integer $d=d(s)\geq 2$ satisfying
\begin{equation*}
	s>\frac{\log_d(d+1)}{2}, \qquad \text{that is} \qquad d+1 < d^{2s}.
\end{equation*}
Let $Q^0:=[0,1]^{n+1}$ be the unit cube of $\mathbb{R}^{n+1}$ and consider $(d+1)d^n$ disjoint $s$-parabolic cubes $Q_i^1, \, 1\leq i \leq (d+1)d^n$, contained in $Q^0$, with sides parallel to the coordinate axes, side length $0<\lambda_1<1/d$, and disposed as follows: first, we consider the first $n$ intervals of the cartesian product $Q^0:=[0,1]^{n+1}$ (that is, those contained in spatial directions) and we divide, each one, into $d$ equal subintervals $I_{1},\ldots,I_d$. For each subinterval $I_j$, we contain another one $J_j$ of length $\lambda_1$. Now, we distribute $J_1,\ldots,J_d$ in an equispaced way, fixing $J_1$ to start at 0 and $J_d$ to end at 1. More precisely, if for each interval $[0,1]$ we name
\begin{equation*}
	l_d:=\frac{1-d\lambda_1}{d-1}, \qquad J_j:=\big[(j-1)(\lambda_1+l_d), j\lambda_1+(j-1)l_d\big], \quad j=1,\ldots, d,
\end{equation*}
we keep the following union of $d$ closed disjoint intervals of length $\lambda_1$
\begin{align*}
	T_d:=\bigcup_{j=1}^d J_j.
\end{align*}
Finally, for the remaining temporal interval $[0,1]$, we do the same splitting but in $d+1$ intervals of length $\lambda_1^{2s}$. That is, we name
\begin{equation*}
	\widetilde{l}_{d}:=\frac{1-(d+1)\lambda_1^{2s}}{d}, \qquad \widetilde{J}_j:=\big[(j-1)(\lambda_1^{2s}+\widetilde{l}_{d}), j\lambda_1^{2s}+(j-1)\widetilde{l}_d\big], \quad j=1,\ldots, d+1.
\end{equation*}
Now we keep the union of $d+1$ closed disjoint intervals of length $\lambda_1^{2s}$
\begin{align*}
	\widetilde{T}_d:=\bigcup_{j=1}^{d+1} \widetilde{J}_j.
\end{align*}
From the above, we define the first generation of the Cantor set as $E_{1,p_s}:=(T_d)^n\times \widetilde{T}_d$, that is conformed by $(d+1)d^n$ disjoint $s$-parabolic cubes (see Figure \ref{Q0Q1}). This procedure continues inductively, i.e. the next generation $E_{2,p_s}$ will be the family of $(d+1)^2d^{2n}$ disjoint $s$-parabolic cubes of side length $\ell_2 := \lambda_1\lambda_2$, $\lambda_2<1/d$, obtained from applying the previous construction to each of the cubes of $E_{1,p_s}$. More generally, the $k$-th generation $E_{k,p_s}$ will be formed by $(d+1)^kd^{nk}$ disjoint $s$-parabolic cubes with side length $\ell_k := \lambda_1\cdots \lambda_k$, $\lambda_k<1/d$, and with locations determined by the above iterative process. We name such cubes $Q^k_j$, with $j=1,\ldots, (d+1)^kd^{nk}$. The resulting $s$-\textit{parabolic} Cantor set is
\begin{equation}
	\label{eq3.3.5}
	E_{p_s} = E_{p_s}(\lambda) :=  \bigcap_{k=1}^{\infty} E_{k,p_s}.
\end{equation}
\begin{figure}[t]
	\centering
	\begin{subfigure}[b]{0.44\textwidth}
		\centering
		\includegraphics[width=\textwidth]{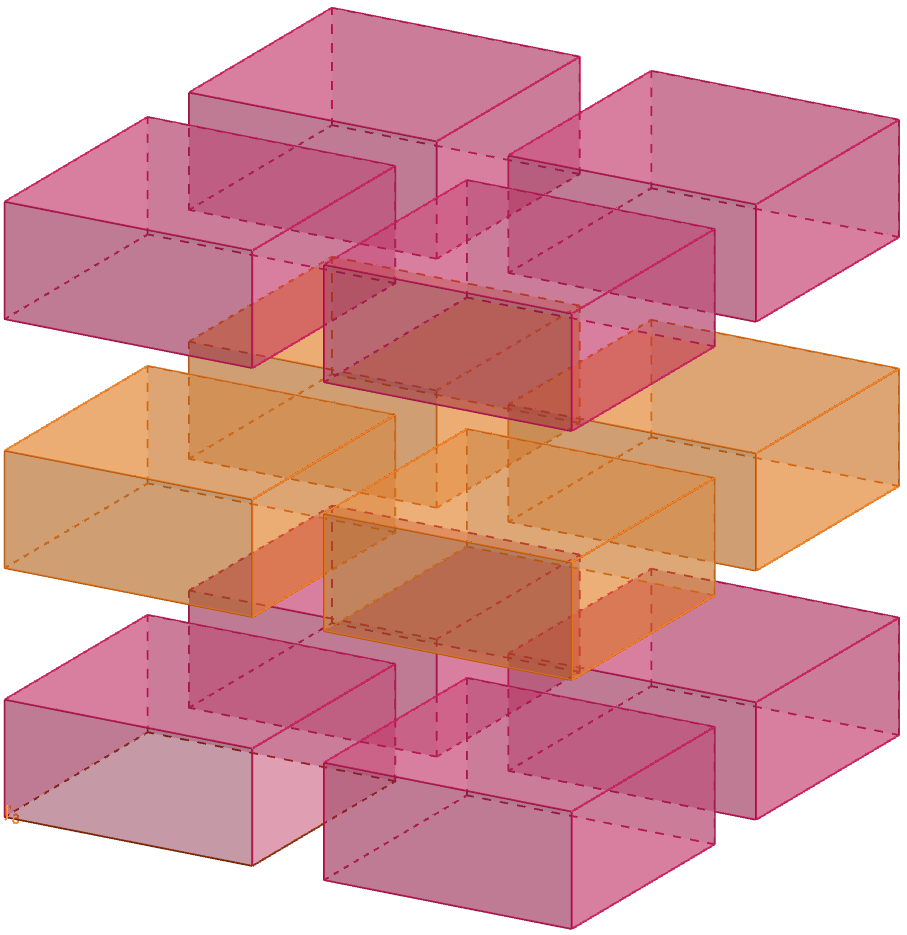}
		\caption{Depiction of $E_{1,p_1}$ with $d=2$}
		\label{Q0}
	\end{subfigure}
	\qquad
	\begin{subfigure}[b]{0.44\textwidth}
		\centering
		\includegraphics[width=\textwidth]{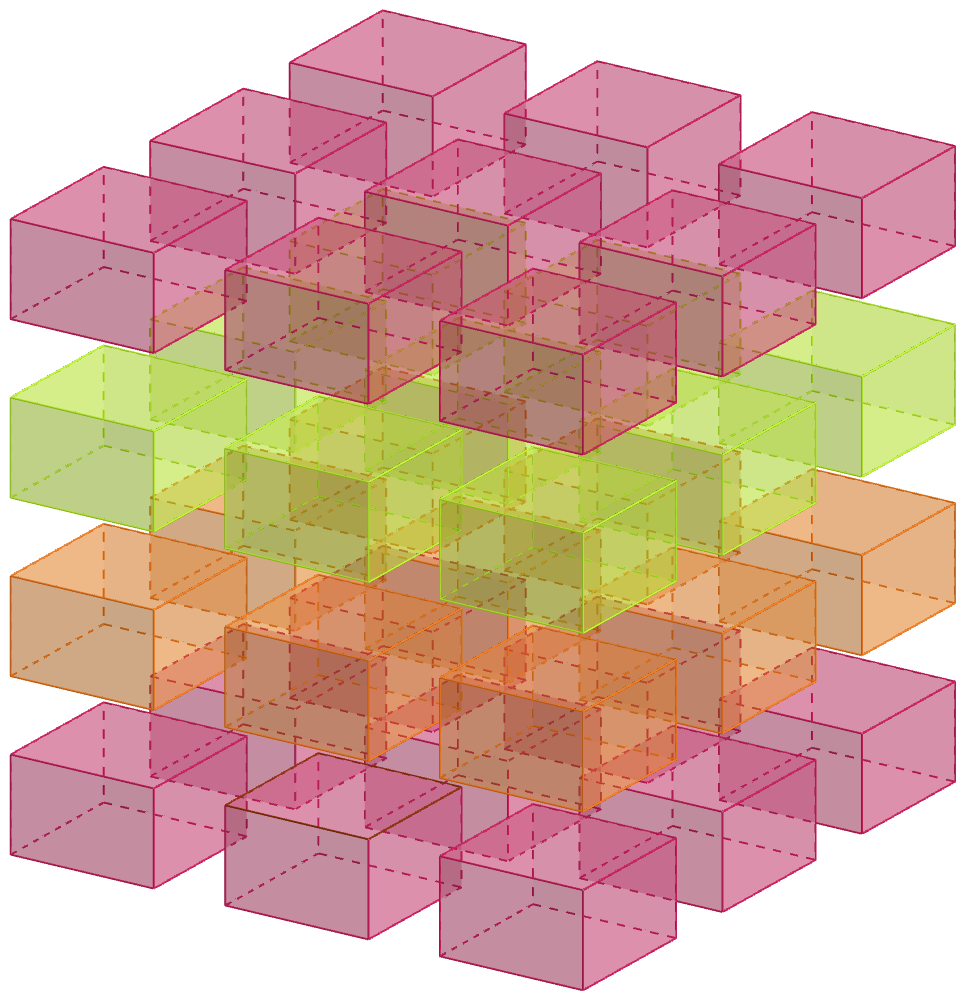}
		\caption{Depiction of $E_{1,p_{2/3}}$ with $d=3$.}
		\label{Q1}
	\end{subfigure}
	\caption{First iterates involved in the construction of $E_{p_1}$ and $E_{p_{2/3}}$ in $\mathbb{R}^3$. For $s=1$ we have chosen $\lambda_1 := 12^{-1/3}$, and for $s=2/3$ we have chosen $\lambda_1:=1/4$.}
	\label{Q0Q1}
\end{figure}

This way, the critical value becomes $((d+1)d^n)^{-1/(n+1)}<1/d$. For instance, if
\begin{equation}
	\label{eq3.3.6}
	\lambda_j := ((d+1)d^n)^{-1/(n+1)}, \qquad \text{for every }\, j,
\end{equation}
then
\begin{equation*}
	\#(E_{k,p_s})\cdot \ell_k^{n+1} =  (d+1)^kd^{nk}\cdot \ell_k^{n+1} = 1, \; \;\forall k\geq 1.
\end{equation*}
Using the previous fact one can deduce $\pazocal{H}^{n+1}_{p_s}(E_{p_s})>0$. Indeed, consider the probability measure $\mu$ defined on $E_{p_s}$ such that for each generation $k$, $\mu(Q_j^k):=(d+1)^{-k}d^{-nk},\, 1 \leq j \leq (d+1)^kd^{nk}$. Let $Q$ be any $s$-parabolic cube, that we may assume to be contained in $Q^0$, and pick $k$ with the property $\ell_{k+1}\leq \ell(Q)\leq \ell_k$, so that $Q$ can meet, at most, $(d+1)d^n$ cubes $Q_j^k$. Thus $\mu(Q)\leq ((d+1)d^n)^{-(k-1)}$ and we deduce
\begin{align}
	\label{eq3.3.7}
	\mu(Q)\lesssim \frac{1}{((d+1)d^n)^{(k+1)}}=\ell_{k+1}^{n+1}\leq \ell(Q)^{n+1}
\end{align}
meaning that $\mu$ presents upper $s$-parabolic $n+1$-growth. Therefore, by \cite[Chapter IV, Lemma 2.1]{Ga}, which follows from Frostman's lemma, we get $\pazocal{H}^{n+1}_{p_s}(E)\geq \pazocal{H}^{n+1}_{p_s,\infty}(E)>0$. Moreover, observe that for a fixed $0<\varepsilon\ll 1$, there is $k$ large enough so that $\text{diam}_{p_s}(Q_j^k)\leq \varepsilon$. Thus, as $E_{k,p_s}$ defines a covering of $E_{p_s}$ admissible for $\pazocal{H}^{n+1}_{p_s,\varepsilon}$, we get
\begin{align*}
	\pazocal{H}^{n+1}_{p_s,\varepsilon}(E)\leq \sum_{j=1}^{((d+1)d^n)^k}\text{diam}_{p_s}(Q^k_j)^{n+1}\simeq \ell_k^{n+1}\,((d+1)d^n)^k = 1.
\end{align*}
Since this procedure can be done for any $\varepsilon$, we also have $\pazocal{H}^{n+1}_{p_s}(E)<\infty$ and thus
\begin{equation*}
	0<\pazocal{H}_{p_s}^{n+1}(E_{p_s})<\infty.
\end{equation*}
\begin{rem}
    Let us observe that as $s\to 1/2$, the value of $d$ grows arbitrarily. That is, as $s$ approaches the value $1/2$, the Cantor set $E_{p_s}$ becomes progressively more and more dense in the unit cube. This suggests that for $s=1/2$, that corresponds to a space-time usual Lipschitz condition in $\mathbb{R}^{n+1}$, the capacity one would obtain should be comparable to the Lebesgue measure in $\mathbb{R}^{n+1}$, as in \cite{U} for analytic capacity.
\end{rem}
\begin{thm}
\label{thm3.3.5}
	Given $s\in(1/2,1]$, the Cantor set $E_{p_s}$ defined in \eqref{eq3.3.5} with the choice \eqref{eq3.3.6} is removable for Lipschitz $s$-caloric functions.
\end{thm}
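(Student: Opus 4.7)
The plan is to prove $\Gamma_{\Theta^s}(E_{p_s})=0$, which by Theorem \ref{thm3.3.1} yields removability. The overall strategy is to combine the $L^2(\mu)$-characterization of $\widetilde{\Gamma}_{\Theta^s,+}$ provided by Theorem \ref{thm3.3.3} with a quantitative lower bound showing that, for the natural probability measure $\mu$ on $E_{p_s}$, the $L^2$-operator norm of $\pazocal{P}^s_\mu$ grows without bound along the exhaustion by generations.

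More concretely, let $\mu$ be the natural probability measure on $E_{p_s}$, which has upper $s$-parabolic $(n+1)$-growth by \eqref{eq3.3.7}, and let $\mu_N$ be the uniform measure on the $N$-th generation $E_{N,p_s}$. The key quantitative bound I would aim for is
\begin{equation*}
\int_{E_{p_s}}\big\rvert \pazocal{P}^{s}\mu_{N}(\ox)\big\rvert^{2}\dd\mu_{N}(\ox)\gtrsim N.
\end{equation*}
I would establish this by decomposing $\pazocal{P}^{s}\mu_N$ as a telescoping sum across scales, $\pazocal{P}^{s}\mu_{N}=\sum_{j=1}^{N}\big(\pazocal{P}^{s}\mu_{j}-\pazocal{P}^{s}\mu_{j-1}\big)$, and showing that the distinct scale contributions are almost orthogonal in $L^{2}(\mu_N)$. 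The essential geometric input is the non-self-intersection parameter $\delta$ satisfying $\delta+1<\delta^{2s}$: it guarantees that the $\delta+1$ temporal subcubes of each parent cube are $s$-parabolically well-separated, preventing the antisymmetry of $\nabla_xP_s$ in the spatial directions from producing total cancellation scale by scale. Combined with the gradient estimates of \cite[Theorem 2.2]{HMP}, the contributions from different scales add up to yield linear growth in $N$. By Theorem \ref{thm3.3.3}, this forces $\widetilde{\Gamma}_{\Theta^{s},+}(E_{p_s})=0$.

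To conclude $\Gamma_{\Theta^s}(E_{p_s})=0$, I would next exploit the reflection symmetries of $E_{p_s}$ (under $x_i\mapsto 1-x_i$ and $t\mapsto 1-t$) in order to identify $\widetilde{\Gamma}_{\Theta^s,+}(E_{p_s})\approx\Gamma_{\Theta^s,+}(E_{p_s})$ on this highly symmetric set: after symmetrizing any competitor measure, $\|\pazocal{P}^{s,\ast}\mu\|_\infty$ is controlled by $\|\pazocal{P}^{s}\mu\|_\infty$ up to an absolute constant. The main obstacle is then the final passage from $\Gamma_{\Theta^s,+}$ (positive measures) to $\Gamma_{\Theta^s}$ (signed distributions), which is not directly available from Theorem \ref{thm3.3.3}. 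To bridge this I would adapt the $T(b)$-type argument of Hytönen--Martikainen used in the proof of Theorem \ref{thm3.3.3}, combined with the Davie--Øksendal dualization, thereby converting the $L^{2}$-unboundedness of $\pazocal{P}^{s}_{\mu}$ directly into the vanishing of $\Gamma_{\Theta^s}(E_{p_s})$.
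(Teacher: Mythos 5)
Your proposal takes a genuinely different route from the paper, but it has a logical gap that is fatal as written. Theorem~\ref{thm3.3.3} tells you that $\widetilde{\Gamma}_{\Theta^s,+}(E)\approx\sup\{\sigma(E):\sigma\in\Sigma_{n+1}^s(E),\ \|\pazocal{P}^s_\sigma\|_{L^2(\sigma)\to L^2(\sigma)}\leq 1\}$, where the supremum ranges over \emph{all} measures in $\Sigma_{n+1}^s(E)$. Proving that the natural measure $\mu$ (your $\mu_N$ limit) has $\|\pazocal{P}^s_\mu\|_{L^2(\mu)\to L^2(\mu)}=\infty$ only says that $\mu$ itself is not a competitor for that supremum; it does not force the supremum to vanish, since an admissible $\sigma$ could in principle be singular with respect to $\mu$ (for instance, concentrated on a thinner sub-Cantor set). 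Turning such an energy estimate about a single canonical measure into vanishing of a capacity requires a genuine comparability theorem (in the style of Tolsa's characterization of analytic capacity), which is not available in this paper. Moreover, Theorem~\ref{thm3.3.3} provides the inequality $\widetilde{\Gamma}_{\Theta^s,+}\lesssim\Gamma_{\Theta^s,+}$, which is the \emph{wrong direction}: even if $\widetilde{\Gamma}_{\Theta^s,+}(E_{p_s})=0$ were established, nothing in the paper lets you deduce $\Gamma_{\Theta^s,+}(E_{p_s})=0$, let alone $\Gamma_{\Theta^s}(E_{p_s})=0$ which involves signed distributions. Your closing appeal to reflection symmetry plus a $Tb$ argument is speculative and does not close this.

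The paper's proof circumvents all of these issues by contradiction: assuming non-removability, it invokes Theorem~\ref{thm3.3.1} to obtain an admissible distribution $\nu$ supported on $E_{p_s}$ with $\langle\nu,1\rangle\neq 0$, and then uses a structure theorem (\cite[Theorem~6.1]{MPT} adapted to the $s$-parabolic setting) to conclude that $\nu=f\mu$ with $\|f\|_{L^\infty(\mu)}\lesssim 1$ and $\mu=\pazocal{H}_{p_s}^{n+1}|_{E_{p_s}}$. This structure theorem is the step your proposal is missing entirely: it is what reduces the problem to the natural measure. From there the paper does \emph{not} estimate an $L^2$ energy; rather, it proves the pointwise maximal bound $\pazocal{P}^s_*\nu\leq\kappa$ (Lemma~\ref{lem3.3.4}) must fail. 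It picks a Lebesgue point $\ox_0$ of $f$ with $f(\ox_0)>0$, moves to an upper-leftmost corner $\oz$ of the containing cube, and exploits the sign condition $(\nabla_xP_s)_1(\oz-\oy)\geq 0$ valid on $Q^k_{\oz}\setminus Q^{k+m}_{\oz}$ so that the scale-by-scale contributions add \emph{constructively}, giving $\widetilde{\pazocal{P}}^s_*\nu(\oz)\gtrsim(m-1)f(\ox_0)-C((\delta+1)\delta^n)^m\varepsilon$. Choosing $m$ large and then $\varepsilon$ small (depending on $m$) yields the contradiction. Your telescoping-plus-almost-orthogonality energy estimate plays a role analogous to the paper's constructive scale-by-scale sum, but that estimate is left unproven in your sketch and, more importantly, would land you in $L^2(\mu)$-space where the passage to vanishing of $\Gamma_{\Theta^s}$ has no available bridge. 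If you want to rescue your outline, you should replace the appeal to Theorem~\ref{thm3.3.3} with the structure theorem of \cite[Theorem~6.1]{MPT} and then convert your energy bound into a pointwise lower bound on the truncated potential at a well-chosen corner, which is exactly what the paper does.
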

\begin{proof}
	We follow an analogous proof to that of \cite[Theorem 6.3]{MPrT}. We will assume that $E_{p_s}$ is not removable and reach a contradiction. By Theorem \ref{thm3.3.1}, there is $\nu$ a distribution with $\text{supp}(\nu)\subset E_{p_s}$ satisfying $|\langle \nu,1\rangle|>0$ and
	\begin{equation*}
		\|\nabla_x P_s*\nu\|_{\infty} \leq 1,\qquad \|\partial_t^{\frac{1}{2s}} P_s*\nu\|_{*,p_s} \leq 1.
	\end{equation*}
	By \cite[Theorem 6.1]{MPrT}, that admits an almost identical proof in the $s$-parabolic context, $\nu$ admits the following representation:
	\begin{equation*}
		\nu=f\,\mu,\qquad \|f\|_{L^\infty(\mu)}\lesssim 1,
	\end{equation*}
	for some $f:E\to\mathbb{R}$ Borel function, and where $\mu:=\pazocal{H}^{n+1}_{p_s}|_{E_{p_s}}$ coincides, up to a constant (depending on $n$ and $s$), with the probability measure supported on $E_{p_s}$ that satisfies  $\mu(Q_j^k)=((d+1)d^n)^{-k}$ for all $j,k$.
	By \eqref{eq3.3.7} $\mu$ (and also $|\nu|$) has upper $s$-parabolic growth of degree $n+1$. By analogous arguments to those of Lemma \ref{lem3.3.4}, there must exist a constant $\kappa(n,s)$ so that
	\begin{equation}
		\label{eq3.3.8}
		\pazocal{P}^{s}_*\nu(\ox)\leq \kappa,\,\quad \mbox{$\ox\in\mathbb{R}^{n+1}$.}
	\end{equation}
	
	For $\ox\in E_{p_s}$, let $Q_{\ox}^k$ be the cube $Q_i^k$ containing $\ox$. Define the auxiliary operator
	\begin{equation*}
		\widetilde{\pazocal{P}}^{s}_*\nu(\ox) = \sup_{k\geq0} |\pazocal{P}^{s}_{\nu} \chi_{\mathbb{R}^{n+1}\setminus Q_{\ox}^k}. (\ox)|,
	\end{equation*}
	Since the cubes $Q_i^k$ are separated, applying the growth of $|\nu|$ as well as estimate \eqref{eq3.3.8}, 
	\begin{equation}
		\label{eq3.3.9}
		\widetilde{\pazocal{P}}^{s}_*\nu(\ox)\leq \kappa',\,\quad \mbox{$\ox\in E$,}
	\end{equation}
	for some constant $\kappa'$. We shall contradict the previous bound.
    
	To do so, pick $\ox_0\in E_{p_s}$ a Lebesgue point (with respect to $\mu$ and to $s$-parabolic cubes) of the Radon-Nikodym derivative $f=\frac{d\nu}{d\mu}$ such that $f(\ox_0)>0$. This can be done since $\nu(E)>0$.
	Given $\varepsilon>0$ small enough to be chosen below, consider a parabolic cube $Q_i^k$ containing $\ox_0$ such that
	\begin{equation*}
		\frac1{\mu(Q_i^k)} \int_{Q_i^k} |f(\bar y)- f(\ox_0)|\dd\mu(y)\leq \varepsilon.
	\end{equation*}
	Let us begin by choosing $\varepsilon$ so that $f(\ox_0)>\varepsilon$. This last condition implies that for a given $m\gg 1$ and any $k\leq h \leq k+m$, if $Q_j^h$ is contained in $Q_{\ox_0}^k$,
	\begin{align}
		\frac{\nu(Q_j^h)}{\mu(Q_j^h)} &=  \frac{1}{\mu(Q_j^h)}\int_{Q_j^h}f(\oy)\dd\mu(\oy)\nonumber\\
		&\geq f(\ox_0) - \bigg\rvert \frac{1}{\mu(Q_j^h)}\int_{Q_j^h}\big(f(\oy)-f(\ox_0)\big)\dd\mu(\oy)\bigg\rvert\nonumber \\
		&\geq f(\ox_0)-\varepsilon\frac{\mu(Q_{\ox_0}^k)}{\mu(Q_j^h)}=f(\ox_0)-((d+1)d^n)^{h-k}\varepsilon. \label{eq3.3.10}
	\end{align}
	For each $m\gg 1$, we fix the value of $\varepsilon$ (and therefore also the value of $k$) to satisfy
	\begin{equation*}
		\varepsilon<\frac{((d+1)d^n)^{-m}}{2}f(\ox_0),
	\end{equation*}
	which implies $\nu(Q_j^h)\geq f(\ox_0)\mu(Q_j^h)/2$ and hence, in particular, $\nu(Q_j^h)\geq 0$. Therefore, we also have
	\begin{align}
    \label{eq3.3.11}
		\frac{\nu(Q_j^h)}{\mu(Q_j^h)} = \bigg\rvert \frac{1}{\mu(Q_j^h)}\int_{Q_j^h}f(\oy)\dd\mu(\oy)\bigg\rvert \leq \frac{3}{2}f(\ox_0)
	\end{align}
	All in all, the previous estimates ensure that choosing $\varepsilon>0$ small enough (depending on $m$), every cube $Q_j^h$ contained in $Q_{\ox_0}^k$ with $k\leq h \leq k+m$ satisfies
	\begin{equation}
		\label{eq3.3.12}
		\frac{1}{2}f(\ox_0)\mu(Q_j^h)\leq \nu(Q_j^h) \leq \frac{3}{2}f(\ox_0)\mu(Q_j^h).
	\end{equation}
	Notice also that if we decompose $\nu$ in terms of its positive and negative variations, that is $\nu = \nu^+-\nu^-$, using $f(\ox_0)>0$ we deduce
	\begin{align}
		\nu^-(Q_{\ox_0}^k) &= \int_{Q_{\ox_0}^k}f^-(\oy)\dd\mu(\oy) = \frac{1}{2}\int_{Q_{\ox_0}^k}\big( |f(\oy)|-f(\oy) \big) \dd\mu(\oy)\nonumber\\
		&\leq \frac{1}{2}\int_{Q_{\ox_0}^k} |f(\oy)-f(\ox_0)|\dd\mu(\oy)-\frac{1}{2}\int_{Q_{\ox_0}^k} \big(f(\oy)-f(\ox_0)\big)\dd\mu(\oy)  \leq \varepsilon \mu(Q_{\ox_0}^k)
		\label{eq3.3.13}
	\end{align}
	Consider $\oz=(z_1,\ldots,z_n,u)$ one of the upper leftmost corners of $Q_{\ox_0}^k$ (that is, with $z_1$ minimal and $u$ maximal in $Q_{\ox_0}^k$). Since $\oz\in E_{p_s}$ and by definition $|\pazocal{P}^{s}_{\nu}\chi_{Q_{\oz}^k\setminus{Q_{\oz}^{k+m}}}(\oz)|=|\pazocal{P}^{s}_{\nu}\chi_{\mathbb{R}^{n+1}\setminus{Q_{\oz}^{k+m}}}(\oz)-\pazocal{P}^{s}_{\nu}\chi_{\mathbb{R}^{n+1}\setminus{Q_{\oz}^k}}(\oz)|\leq 2\widetilde{\pazocal{P}}_{\ast}^s\nu(\oz)$, we have
	\begin{equation*}
		\widetilde{\pazocal{P}}^{s}_\ast\nu(\oz) \geq \frac{1}{2}|\pazocal{P}^{s}_{\nu}\chi_{Q_{\oz}^k\setminus{Q_{\oz}^{k+m}}}(\oz)|\geq \frac{1}{2}|\pazocal{P}^{s}_{\nu^+}\chi_{Q_{\oz}^k\setminus{Q_{\oz}^{k+m}}}(\oz)|-\frac{1}{2}|\pazocal{P}^{s}_{\nu^-}\chi_{Q_{\oz}^k\setminus{Q_{\oz}^{k+m}}}(\oz)|.
	\end{equation*} 
	Observe that $\text{dist}_{p_s}(\oz,Q_{\oz}^k\setminus{Q_{\oz}^{k+m}})\gtrsim \ell(Q_{\oz}^{k+m})$ (with implicit constants depending on the non-self-intersection parameter $d=d(s)$), so we are able to estimate $|\pazocal{P}_{\nu^-}^s\chi_{Q_{\oz}^k\setminus{Q_{\oz}^{k+m}}}(\oz)|$ from above in the following way
	\begin{align*}
		|\pazocal{P}^{s}_{\nu^-}\chi_{Q_{\oz}^k\setminus{Q_{\oz}^{k+m}}}(\oz)|&\leq \int_{Q_{\oz}^k\setminus{Q_{\oz}^{k+m}}}|\nabla_xP_s(\oz-\oy)|\dd\nu^-(\oy)\leq \int_{Q_{\oz}^k\setminus{Q_{\oz}^{k+m}}}\frac{\dd\nu^-(\oy)}{|\oz-\oy|_{p_s}^{n+1}}\\
		&\lesssim \frac{\nu^-(Q_{\oz}^k)}{\ell(Q_{\oz}^{k+m})^{n+1}}\leq \varepsilon\frac{\mu(Q_{\oz}^k)}{\ell(Q_{\oz}^{k+m})^{n+1}}=\varepsilon\frac{((d+1)d^n)^{-k}}{((d+1)d^n)^{-m}\ell(Q_{\oz}^{k})^{n+1}}\\
		&=((d+1)d^n)^m\varepsilon,
	\end{align*}
	where we have used \cite[Theorem 2.2]{HeMPr} and \eqref{eq3.3.13}. 
    
	To estimate $|\pazocal{P}^{s}_{\nu^+}\chi_{Q^k_{\oz}\setminus Q^{k+m}_{\oz}}(\oz)|$ from below, we refer the reader to the proof of \cite[Theorem 2.2]{HeMPr} and \cite[Equation (2.5)]{HeMPr} to check that the first component of the kernel $\nabla_x P_s$, for $s<1$, satisfies
	\begin{equation*}
		(\nabla_xP_s)_1(\ox) \approx c_0\frac{-x_1t}{|\ox|_{p_s}^{n+2s+2}} \chi_{t>0},
	\end{equation*}
	for some constant $c_0>0$.
	Then, by the choice of $\oz$, it follows that
	\begin{equation}
		\label{eq3.3.14}
		(\nabla_xP_s)_1(\oz- \bar y) \geq 0\quad \mbox{ for all $\bar y\in Q^k_{\oz}\setminus Q^{k+m}_{\oz}$.}
	\end{equation}
	We write
	\begin{align*}
		|\pazocal{P}^{s}_{\nu^+}\chi_{Q^k_{\oz}\setminus Q^{k+m}_{\oz}}
		(\oz)| &\geq \int_{Q^k_{\oz}\setminus Q^{k+m}_{\oz}} (\nabla_xP_s)_1(\oz- \bar y)\,
		\dd\nu^+(\bar y) \\
		&= \sum_{h=k}^{k+m-1} \int_{Q^h_{\oz}\setminus Q^{h+1}_{\oz}} (\nabla_xP_s)_1(\oz- \bar y)\,
		\dd\nu^+(\bar y).
	\end{align*}
	By relation \eqref{eq3.3.14} and that for $k\leq h\leq k+m-1$, the set $Q^h_{\oz}\setminus Q^{h+1}_{\oz}$ contains a cube $Q^{h+1}_j$ such that for all $\bar y=(y_1,\ldots, y_n,\tau)$,
	\begin{equation*}
		0<y_1-z_1 \approx |\bar y-\oz|\approx\ell(Q^{h+1}_j),\qquad 0<u-\tau \approx \ell(Q^{h+1}_j)^{2s}.
	\end{equation*}
	Indeed, we might just consider the lower rightmost cube of the $h+1$ generation that is contained in $Q_{\overline{z}}^{h}$ and take advantage of the corner choice of $\overline{z}$ and the fact that $\overline{z}\notin Q_{\overline{z}}^k\setminus{Q_{\overline{z}}^{k+m}}$. Now, using also \eqref{eq3.3.12}, we deduce
	\begin{align*}
		\int_{Q_{\overline{z}}^h\setminus{Q_{\overline{z}}^{h+1}}}(\nabla_xP_s)_1(\overline{z}-\overline{y})\dd\nu^+(\overline{y}) &\geq \int_{Q_j^{h+1}}(\nabla_xP_s)_1(\overline{z}-\overline{y})\dd\nu^+(\overline{y})\gtrsim \frac{\nu^+(Q_j^{h+1})}{\ell (Q_j^{h+1})^{n+1}}\\
		&\gtrsim f(\overline{x}_0)\frac{\mu(Q_j^{h+1})}{\ell(Q_j^{h+1})^{n+1}}=f(\overline{x}_0).
	\end{align*}
	Therefore,
	\begin{equation*}
		|\pazocal{P}_{\nu^+}^s\chi_{Q^k_{\oz}\setminus Q^{k+m}_{\oz}}(\oz)| \gtrsim (m-1)\,f(\ox_0).
	\end{equation*}
	and combining this with the previous estimate obtained for $|\pazocal{P}_{\nu^-}^s\chi_{Q_{\overline{z}}^k\setminus{Q_{\overline{z}}^{k+m}}}(\overline{z})|$ we get
	\begin{equation*}
		\widetilde{\pazocal{P}}^{s}_\ast\nu(\overline{z})\gtrsim (m-1)f(\overline{x}_0)-C((d+1)d^n)^m\varepsilon
	\end{equation*}
	for some constant $C>0$. Then, choosing $m$ big enough and then $\varepsilon$ small enough, depending on $m$, the upper bound \eqref{eq3.3.9} cannot hold and we reach the desired contradiction.
\end{proof}
\section{The non-comparability of \mathinhead{\Gamma_{\Theta}}{} and \mathinhead{\gamma_{\Theta}^{1/2}}{} in the plane}
\label{sec5}
In this brief last section we would like to introduce a capacity tightly related to the capacities $\gamma_{\Theta^s,\ast}^{\sigma}$ presented in \cite[\textsection 5.3]{HeMPr} and compare it with the Lipschitz caloric capacity $\Gamma_{\Theta}$ studied in \cite{MPrT}. Recall that, given $E\subset \mathbb{R}^{n+1}$ compact set, we defined
\begin{equation*}
    \Gamma_\Theta(E):=\sup |\langle T, 1 \rangle|,
\end{equation*}
with the supremum taken among all distributions in $\mathbb{R}^{n+1}$ supported on $E$ such that
\begin{equation*}
    \|\nabla_x W\ast T\|_\infty \leq 1 \qquad \text{and} \qquad \|\partial_t^{1/2}W\ast T\|_{\ast,p_1}\leq 1.
\end{equation*}
Bearing in mind the above definition, we introduce the capacity $\gamma_{\Theta}^{1/2}$ analogously, but instead of requiring a $(1,1/2)$-Lipschitz property over the potentials, we ask for
\begin{equation*}
    \| (-\Delta)^{1/2} W\ast T\|_\infty \leq 1 \qquad \text{and} \qquad \|\partial_t^{1/2}W\ast T\|_{\ast,p_1}\leq 1.
\end{equation*}
We will prove that $\Gamma_\Theta$ and $\gamma_{\Theta}^{1/2}$ share the same critical dimension but, at least in $\mathbb{R}^2$, they do not share the same removable sets. This is remarkable since the operators $\nabla_x$ and $(-\Delta)^{1/2}$ present Fourier symbols with shared homogeneity.

Hence, let us fix $s=1$ and study the following capacity:
\begin{defn}
    Given $E\subset \mathbb{R}^{n+1}$ compact set define its \textit{$\Delta^{1/2}$-caloric capacity} as
		\begin{equation*}
			\gamma^{1/2}_{\Theta}(E):=\sup |\langle T, 1 \rangle| ,
		\end{equation*}
		where the supremum is taken among all distributions $T$ with $\text{supp}(T)\subseteq E$ and satisfying
		\begin{equation*}
			\big\|(-\Delta)^{1/2} W\ast T\big\|_{\infty} \leq 1, \hspace{0.75cm} \big\|\partial^{1/2}_t W\ast T\big\|_{\ast, p}\leq 1.
		\end{equation*}
		Such distributions will be called \textit{admissible for $\gamma^{1/2}_{\Theta}(E)$}. Notice that the operator $(-\Delta)^{1/2}$ can be represented as
\end{defn}
	\begin{equation*}
		(-\Delta)^{1/2}\varphi(x,t) \simeq \text{ p.v.}\int_{\mathbb{R}^n}\frac{\varphi(x,t)-\varphi(y,t)}{|x-y|^{n+1}} \simeq  \sum_{j=1}^n R_j\partial_j \varphi,
	\end{equation*}
	where $R_j$ are the usual $n$-dimensional Riesz transforms, with Fourier multiplier an absolute multiple of $\xi_j/|\xi|$. Observe that as a direct consequence of \cite[Theorem 5.8]{HeMPr} we obtain the following growth result for admissible distributions for $\gamma^{1/2}_{\Theta}$.
    \begin{thm}
    \label{thm4.1.1}
		Let $E\subset\mathbb{R}^{n+1}$ be compact and $T$ be an admissible distribution for $\gamma^{1/2}_{\Theta}$. Then, $T$ presents upper $1$-parabolic growth of degree $n+1$, that is,
		\begin{equation*}
			|\langle T, \varphi \rangle|\lesssim \ell(Q)^{n+1}, \qquad \text{for $Q\subset \mathbb{R}^{n+1}$ any $1$-parabolic cube and $\varphi$ admissible for $Q$}.
		\end{equation*}
	\end{thm}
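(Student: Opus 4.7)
The plan is to reduce the statement to \cite[Theorem 5.8]{HMP}, which already establishes upper parabolic $n{+}1$ growth for distributions admissible in the BMO-based capacity $\gamma_{\Theta,\ast}^{1/2}$ introduced there. The key step is therefore a pure comparison between the admissibility classes for $\gamma_{\Theta}^{1/2}$ and for $\gamma_{\Theta,\ast}^{1/2}$.

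First I would record the elementary continuous embedding $L^\infty(\mathbb{R}^{n+1})\hookrightarrow \text{BMO}_p(\mathbb{R}^{n+1})$, with a dimensional constant. As a consequence, if $T$ is admissible for $\gamma_{\Theta}^{1/2}$, i.e.\ $\|(-\Delta)^{1/2}W\ast T\|_{\infty}\leq 1$ and $\|\partial_t^{1/2}W\ast T\|_{\ast,p}\leq 1$, then automatically $\|(-\Delta)^{1/2}W\ast T\|_{\ast,p}\lesssim 1$, while the second normalization is literally the same in both capacities. Hence, up to a dimensional multiplicative constant, $T$ lies in the admissibility class of $\gamma_{\Theta,\ast}^{1/2}$ in the sense of \cite[\textsection 5.3]{HMP}.

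Having established this inclusion, the desired bound $|\langle T,\varphi\rangle|\lesssim \ell(Q)^{n+1}$ for every parabolic cube $Q$ and every admissible $\varphi$ for $Q$ follows by directly invoking \cite[Theorem 5.8]{HMP} with $s=1$ and $\sigma=1/2$. I expect no substantive obstacle: the only bookkeeping is checking that the notion of admissible test function on a parabolic cube in \cite{HMP} coincides with Definition 3.3 of the present paper specialized to $s=1$, namely $\|\varphi\|_\infty\leq 1$, $\|\nabla_x\varphi\|_\infty\leq \ell(Q)^{-1}$, $\|\partial_t\varphi\|_\infty\leq \ell(Q)^{-2}$ and $\|\Delta\varphi\|_\infty\leq \ell(Q)^{-2}$, which is immediate from the definitions. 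Thus the entire proof reduces to these two lines of comparison followed by the citation.
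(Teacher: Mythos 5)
Your proposal is correct and follows exactly the route the paper takes: the paper simply declares Theorem \ref{thm4.1.1} a "direct consequence" of \cite[Theorem 5.8]{HMP}, and the bridging observation you supply — that $\|(-\Delta)^{1/2}W\ast T\|_\infty\leq 1$ implies $\|(-\Delta)^{1/2}W\ast T\|_{\ast,p}\lesssim 1$ via the trivial embedding $L^\infty\hookrightarrow\text{BMO}_p$, so that admissibility for $\gamma_\Theta^{1/2}$ implies admissibility (up to a dimensional constant) for $\gamma_{\Theta,\ast}^{1/2}$ — is precisely the implicit step the paper is relying on. You also correctly match the notion of admissible test function at $s=1$.
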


    Hence, given such growth, one of the first questions that arises is if $\Gamma_{\Theta}$ and $\gamma^{1/2}_{\Theta}$ are comparable. Indeed, the following result analogous Theorem \ref{thm3.3.2} also holds in the current setting, which implies that $\Gamma_{\Theta}$ and $\gamma^{1/2}_{\Theta}$ both share critical dimension:
	\begin{thm}
		\label{thm4.1.2}
		For every compact set $E\subset \mathbb{R}^{n+1}$ the following hold:
		\begin{enumerate}
			\item[1.] $\gamma^{1/2}_\Theta(E)\leq C\,\pazocal{H}_{\infty,p_1}^{n+1}(E)$, for some dimensional constant $C>0$.
			\item[2.] If $\text{\normalfont{dim}}_{\pazocal{H}_{p_1}}(E)>n+1$, then $\gamma^{1/2}_{\Theta}(E)>0$.
		\end{enumerate}
	\end{thm}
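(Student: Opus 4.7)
The structure closely parallels Theorem \ref{thm3.3.2}, replacing $\nabla_x P_s$ by $(-\Delta)^{1/2}W$ throughout and specializing to $s=1$.

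\textbf{Part 1.} I would follow the scheme of \cite[Theorem 5.3]{HMP}. Let $T$ be admissible for $\gamma_{\Theta}^{1/2}(E)$. Fix $\varepsilon>0$ and choose a covering $\{B_i\}$ of $E$ by parabolic balls of radii $r_i$ with $\sum_i r_i^{n+1}\leq \pazocal{H}_{\infty,p}^{n+1}(E)+\varepsilon$. By a Vitali/Besicovitch-type selection adapted to the parabolic distance we may assume the dilated family $\{2B_i\}$ has bounded overlap. Build a smooth partition of unity $\{\varphi_i\}$ subordinate to $\{2B_i\}$, normalized so that each $\varphi_i$, after multiplication by an absolute constant, is admissible for the parabolic cube of side $\approx r_i$ circumscribing $2B_i$. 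Since $T$ is supported on $E$, we have $\langle T,1\rangle=\sum_i\langle T,\varphi_i\rangle$, and Theorem \ref{thm4.1.1} (growth of admissible distributions) yields $|\langle T,\varphi_i\rangle|\lesssim r_i^{n+1}$. Summing gives $|\langle T,1\rangle|\lesssim \pazocal{H}_{\infty,p}^{n+1}(E)+\varepsilon$, and $\varepsilon\to 0$ concludes.

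\textbf{Part 2.} Let $d:=\text{dim}_{\pazocal{H}_p}(E)>n+1$ and pick $d'\in(n+1,d)$. Arguing exactly as in the proof of Theorem \ref{thm3.3.2} (passing to a compact subset if needed via the parabolic version of \cite[Theorem 4.10]{F}), the parabolic Frostman lemma from \cite[Lemma 5.1]{MPT} produces a non-zero positive Borel measure $\mu$ supported on (a compact subset of) $E$ with
\begin{equation*}
    \mu(B(\ox,r))\leq r^{d'}, \qquad \forall\,\ox\in\mathbb{R}^{n+1},\;r>0.
\end{equation*}
It then suffices to verify that $\mu$ is admissible for $\gamma_{\Theta}^{1/2}(E)$ up to absolute constants, which requires:
\begin{equation*}
    \bigl\|\partial_t^{1/2}W\ast\mu\bigr\|_{\ast,p}\lesssim 1 \qquad\text{and}\qquad \bigl\|(-\Delta)^{1/2}W\ast\mu\bigr\|_\infty\lesssim 1,
\end{equation*}
since the first bound then implies $\gamma_{\Theta}^{1/2}(E)\gtrsim \mu(E)>0$. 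The BMO bound for $\partial_t^{1/2}W\ast\mu$ is a direct application of \cite[Lemma 4.2]{HMP} with $\beta:=1/2$, exactly as in Theorem \ref{thm3.3.2}.

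For the $L^\infty$ bound on $(-\Delta)^{1/2}W\ast\mu$, the plan is to establish the pointwise parabolic decay
\begin{equation*}
    \bigl|(-\Delta)^{1/2}W(\ox)\bigr|\lesssim |\ox|_p^{-(n+1)},\qquad \ox\neq 0,
\end{equation*}
either directly from the integral representation of $(-\Delta)^{1/2}$ applied to the explicit Gaussian $W$ and use of parabolic scaling, or via the decomposition $(-\Delta)^{1/2}=\sum_j R_j\partial_j$ together with the standard bound $|\nabla_x W(\ox)|\lesssim |\ox|_p^{-(n+1)}$ and the size/regularity of the Riesz kernels. Once this is in hand, a standard parabolic annular decomposition centered at $\ox$ gives
\begin{equation*}
    \bigl|(-\Delta)^{1/2}W\ast\mu(\ox)\bigr|\lesssim \int\frac{\dd\mu(\oy)}{|\ox-\oy|_p^{n+1}}\lesssim \operatorname{diam}_p(\operatorname{supp}\mu)^{d'-(n+1)}\lesssim 1,
\end{equation*}
using the $d'$-growth of $\mu$ with $d'>n+1$ to sum the geometric series.

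The main obstacle is isolating the pointwise kernel estimate for $(-\Delta)^{1/2}W$ in the parabolic scale. Everything else is a transcription of the scheme used for $\Gamma_{\Theta^s}$; once the kernel bound is established, the annular summation and the Frostman construction go through verbatim.
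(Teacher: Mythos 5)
Your proposal follows essentially the same route as the paper: Part~1 is the covering-plus-partition-of-unity argument invoking the growth estimate of Theorem~\ref{thm4.1.1} (as in \cite[Theorem 5.3]{HMP}), and Part~2 is the parabolic Frostman lemma combined with pointwise kernel decay and an annular summation, with the BMO bound on $\partial_t^{1/2}W\ast\mu$ coming from \cite[Lemma 4.2]{HMP}. The paper's proof is just a terser transcription of the same plan.

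One remark on the ``main obstacle'' you flag. The pointwise bound $|(-\Delta)^{1/2}W(\ox)|\lesssim |\ox|_p^{-(n+1)}$ is not really an obstacle: it is already available in \cite[Theorem 2.3]{HMP} (which the paper itself invokes a little later, in Lemma~\ref{lem4.1.3}), and it also falls out of the parabolic scaling identity the paper uses in the proof of Theorem~\ref{thm2.5.5}, namely $(-\Delta)^{1/2}W(x,t)\simeq t^{-(n+1)/2}\big((-\Delta)^{1/2}e^{-|\cdot|^2/4}\big)(xt^{-1/2})$ together with the boundedness and decay of $(-\Delta)^{1/2}e^{-|\cdot|^2/4}$. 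Of your two proposed derivations, the direct one via the integral representation and scaling is the sound one; the route through $(-\Delta)^{1/2}=\sum_j R_j\partial_j$ does not by itself give a pointwise kernel bound, since the Riesz transforms are not bounded on $L^\infty$ and composing them with the kernel $\nabla_x W$ requires a separate argument — so you should drop that alternative or treat it only as motivation.
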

	
	\begin{proof}
		To prove \textit{1} we proceed analogously as we have done in the proof of Theorem \ref{thm3.3.2}, using now the growth restriction given by Lemma \ref{thm4.1.1}.
        
        To prove \textit{2} we argue as in Theorem \ref{thm3.3.2}. We name $d:=\text{\normalfont{dim}}_{\pazocal{H}_{p_1}}(E)$ and assume $0<\pazocal{H}_{p_1}^d(E)<\infty$. Apply Frostman's lemma and pick a non-zero positive measure $\mu$ supported on $E$ with $\mu(B(\ox,r))\leq r^{d}$, being $B(\ox,r)$ any $1$-parabolic ball. If we prove
	\begin{equation*}
		\big\|(-\Delta)^{1/2} W\ast \mu\big\|_{\infty}\lesssim 1\hspace{0.5cm} \text{and}\hspace{0.5cm} \big\|\partial^{1/2}_t W\ast \mu\big\|_{\ast,p}\lesssim 1,
	\end{equation*}
	we will be done, because then we would have $\gamma^{1/2}_{\Theta}(E)\gtrsim \langle \mu, 1 \rangle>0$. To estimate $\partial^{1/2}_t W\ast \mu$ we apply \cite[Lemma 4.2]{HeMPr} with $\beta:=1/2$. To study the bound of $(-\Delta)^{1/2} W\ast \mu$, apply \cite[Lemma 2.2]{MPr} to deduce that for any $\ox\in\mathbb{R}^{n+1}$,
	\begin{equation*}
		|(-\Delta)^{1/2} W\ast \mu(\ox)|\lesssim \int_{E}\frac{\text{d}\mu(\oz)}{|\ox-\oy|_{p}^{n+1}}\lesssim \text{diam}_{p}(E)^{d-(n+1)}\lesssim 1,
	\end{equation*}
	and we are done.
	\end{proof}
	
	To study whether if $\Gamma_{\Theta}$ is comparable to $\gamma^{1/2}_{\Theta}$ we notice that, as a consequence of Theorem \ref{thm3.3.3}, any subset of positive $\pazocal{H}_{p_1}^{n+1}$ measure of a non-horizontal hyperplane (that is, not parallel to $\mathbb{R}^n\times\{0\}$) is not Lipschitz caloric removable. Therefore, in the planar case $(n=1)$, the vertical line segment
	\begin{equation*}
		E:=\{0\}\times [0,1], \qquad \text{that is such that $\text{dim}_{\pazocal{H}_{p_1}}(E)=n+1=2$},
	\end{equation*}
	is a first candidate to consider. To proceed, let us define a series of  operators analogous to those presented in \textsection \ref{sec3.3}. Given $\mu$, a real compactly supported Borel regular measure with upper $1$-parabolic growth of degree $n+1$, let $\pazocal{T}_\mu$ be acting on elements of $L^1_{\text{loc}}(\mu)$ as
	\begin{equation*}
		\pazocal{T}_{\mu}f(\ox):=\int_{\mathbb{R}^{n+1}}(-\Delta)^{1/2}W(\ox-\oy)f(\oy)\text{d}\mu(\oy), \hspace{0.5cm} \ox\notin \text{supp}(\mu),
	\end{equation*}
	as well as its truncated version $\pazocal{T}_{\mu,\varepsilon}f$ and maximal operator $\pazocal{T}_{\ast,\mu} f$, with analogous definitions to those found in \textsection\ref{sec3.3} for the operator $\pazocal{P}^s$.
	
	Notice that comparing \cite[Lemma 5.4]{MPrT} and \cite[Theorem 2.3]{HeMPr} with the particular choice of $s=1$ and $\beta:=1/2$, the growth-like behavior of the kernels $\nabla_xW$ and $(-\Delta)^{1/2}W$ is analogous. From this observation, the following result admits an analogous proof to that of Lemma \ref{lem3.3.4}.
	\begin{lem}
		\label{lem4.1.3}
		Assume that $\mu$ is a real Borel measure with compact support and $n+1$ upper $1$-parabolic growth with $\|\pazocal{T}\mu\|_{\infty}\leq 1$. Then, there is $\kappa>0$ absolute constant so that
	\begin{equation*}
		\pazocal{T}_{\ast}\mu (\ox)\leq \kappa, \hspace{0.5cm} \forall\, \ox\in \mathbb{R}^{n+1}.
	\end{equation*}
	\end{lem}
	
	Using the above lemma we are able to prove the following:
	
	\begin{thm}
		\label{thm2.5.5}
		The vertical segment $E:=\{0\}\times [0,1] \subset \mathbb{R}^2$ satisfies $\gamma^{1/2}_{\Theta}(E)=0$. Therefore, $\Gamma_{\Theta}$ and $\gamma^{1/2}_{\Theta}$ are not comparable in $\mathbb{R}^2$.
	\end{thm}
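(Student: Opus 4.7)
The plan is to extract an explicit pointwise formula for $(-\Delta)^{1/2}W$ along the time axis and exploit its \emph{one-sided} singularity to rigidify any admissible distribution. With $\widehat{W}(\xi,t)=e^{-4\pi^2 t\xi^2}$ for $t>0$, in the planar case $n=1$ one computes
\[
(-\Delta)^{1/2}W(0,t) \;=\; \int_{\mathbb{R}}|\xi|\,e^{-4\pi^2 t\xi^2}\,\dd\xi \;=\; \frac{1}{4\pi^2 t}, \qquad t>0,
\]
and $(-\Delta)^{1/2}W(0,t)=0$ for $t\le 0$. So on the time axis the kernel is strictly positive and singular like $1/t$, in sharp contrast with the first component of $\nabla_x W$, proportional to $-x_1 t\,\chi_{t>0}$, which vanishes identically on $\{0\}\times\mathbb{R}$. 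That vanishing is exactly what allowed the vertical segment to carry nontrivial $\Gamma_\Theta$-admissible distributions (cf.\ the proof of Theorem \ref{thm3.3.5}), and its failure for $(-\Delta)^{1/2}W$ is what I want to turn into a nonexistence argument.

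I would argue by contradiction. Suppose $T$ is admissible for $\gamma_\Theta^{1/2}(E)$ with $\langle T,1\rangle>0$. Theorem \ref{thm4.1.1} gives $T$ upper parabolic growth of degree $n+1=2$. Following the strategy used in the proof of Theorem \ref{thm3.3.5} and invoking the analog of \cite[Theorem 6.1]{MPT} --- which carries over essentially verbatim since $E$ is trivially the graph of the zero spatial function and hence a Lipschitz $(1,1/2)$ graph --- one represents $T=f\mu$, where $\mu=\mathcal{L}^1|_E$ (comparable to $\pazocal{H}^{2}_p|_E$) and $f\in L^\infty(\mu)$. Lemma \ref{lem4.1.3}, applied to the real measure $T$, then yields the uniform pointwise bound $\pazocal{T}_\ast T(\ox)\le \kappa$ on $\mathbb{R}^2$.

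The contradiction comes from testing this bound at a Lebesgue point $\ox_0=(0,t_0)$ of $f$ (with respect to $\mu$ and parabolic balls) with $t_0\in(0,1)$ and $f(\ox_0)>0$; such a point exists because $\int f\,\dd\mu=\langle T,1\rangle>0$. Since $\ox_0-\oy=(0,t_0-\tau)$ whenever $\oy=(0,\tau)\in E$, the kernel formula reduces the truncated operator to
\[
\pazocal{T}_{T,\varepsilon}\mathbf{1}(\ox_0) \;=\; c\int_{0}^{t_0-\varepsilon^2}\frac{f(0,\tau)}{t_0-\tau}\,\dd\tau,
\]
where $|\ox_0-\oy|_p=|t_0-\tau|^{1/2}>\varepsilon$ fixes the truncation. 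Decomposing $f(0,\tau)=f(\ox_0)+(f(0,\tau)-f(\ox_0))$, the principal term contributes $c f(\ox_0)\log(t_0/\varepsilon^2)$. For the remainder I would decompose dyadically into shells $\tau\in(t_0-2^{-j},t_0-2^{-j-1})$; each such shell lies inside $B_p(\ox_0,2^{-j/2})$, which has $\mu$-measure $\approx 2^{-j}$, so the Lebesgue point property gives $\eta_j:=2^{j+1}\int_{B_p(\ox_0,2^{-j/2})}|f-f(\ox_0)|\,\dd\mu \to 0$ as $j\to\infty$. A Ces\`aro-type averaging then shows $\sum_{j}\eta_j = o(\log(1/\varepsilon))$, so $\pazocal{T}_{T,\varepsilon}\mathbf{1}(\ox_0)\to +\infty$ as $\varepsilon\to 0$, contradicting $\pazocal{T}_\ast T(\ox_0)\le\kappa$. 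Hence $\gamma_\Theta^{1/2}(E)=0$. The non-comparability with $\Gamma_\Theta$ follows because $E$ is a subset of positive $\pazocal{H}^{2}_p$-measure of the non-horizontal hyperplane $\{x=0\}$, which gives $\Gamma_\Theta(E)>0$.

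The step I expect to be most delicate is the reduction $T=f\mu$: \cite[Theorem 6.1]{MPT} is formulated for $\Gamma_\Theta$-admissibility (based on $\nabla_x W$ and $\partial_t^{1/2}W$), and one must check that its proof adapts when $\nabla_x W$ is replaced by $(-\Delta)^{1/2}W$. In the planar case with $E$ as symmetric as a vertical segment this should amount to a routine check, since the only ingredients invoked are the parabolic $(n+1)$-growth from Theorem \ref{thm4.1.1} and the parabolic BMO control of $\partial_t^{1/2}W*T$; but it is the one place the proof relies on external machinery rather than a direct computation.
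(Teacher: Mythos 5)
Your proof is correct, and the overall architecture matches the paper's: represent the admissible distribution as $\nu=f\,\pazocal{H}^2_p|_E$ with $\|f\|_\infty\lesssim 1$ (the paper cites \cite[Lemma 6.2]{MPT}, not Theorem 6.1, but the ingredients you identify---the degree-$2$ growth from Theorem \ref{thm4.1.1} and the finiteness of $\pazocal{H}^2_p(E)$---are exactly what that lemma needs, and the kernel substitution plays no role there); bound $\pazocal{T}_\ast\nu$ uniformly by Lemma \ref{lem4.1.3}; exploit the one-sided singularity $(-\Delta)^{1/2}W(0,t)\simeq t^{-1}\chi_{t>0}$ at a Lebesgue point of $f$ to contradict that bound. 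Your explicit evaluation $\int_{\mathbb{R}}|\xi|e^{-4\pi^2 t\xi^2}\dd\xi=(4\pi^2 t)^{-1}$ is the same formula the paper derives via scaling.

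Where you diverge is in the endgame, and your route is arguably cleaner. The paper splits $\nu=\nu^+-\nu^-$, fixes a depth parameter $m$, then picks $\varepsilon$ (and hence the generation $k$) small depending on $m$; it lower-bounds $\pazocal{T}_{\nu^+}\chi_{B_{k,0}\setminus B_{k+m,0}}(\ox_0)$ shell by shell over $m$ scales to get $\gtrsim m\,f(\ox_0)$, upper-bounds $\pazocal{T}_{\nu^-}\chi_{B_{k,0}\setminus B_{k+m,0}}(\ox_0)\lesssim 2^{2m}\varepsilon$, and concludes via the double limit $m\to\infty$, $\varepsilon\to 0$. You instead decompose the density additively as $f=f(\ox_0)+(f-f(\ox_0))$ in the single truncated integral $\pazocal{T}_{\nu,\varepsilon}(\ox_0)=c\int_0^{t_0-\varepsilon^2}\frac{f(0,\tau)}{t_0-\tau}\dd\tau$: the principal part produces $\simeq f(\ox_0)\log(1/\varepsilon^2)$, and the remainder is $o(\log(1/\varepsilon))$ by the dyadic shell/Ces\`aro argument, using that the shell averages $\eta_j$ tend to $0$ and are uniformly bounded since $f\in L^\infty$. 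This trades the paper's two-parameter bookkeeping (and the separate handling of $\nu^\pm$ via \eqref{eq2.5.6}--\eqref{eq2.5.7}) for a single $\varepsilon\to 0$ limit, at the cost of invoking the Ces\`aro averaging lemma, which is cheap. Both proofs hinge on the same geometric fact---that $(-\Delta)^{1/2}W$, unlike the first component of $\nabla_x W$, does not vanish on the time axis and has a nonintegrable one-sided singularity there---and your opening paragraph articulates that contrast with $\Gamma_\Theta$ well. The non-comparability conclusion via Theorem \ref{thm3.3.3} is the same as the paper's.

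One cosmetic remark: your notation $\pazocal{T}_{T,\varepsilon}\mathbf{1}$ and $\pazocal{T}_\ast T$ should read $\pazocal{T}_{\nu,\varepsilon}$, $\pazocal{T}_\ast\nu$ with $\nu$ the signed measure, to line up with Lemma \ref{lem4.1.3} as stated; and the uniform bound $\eta_j\lesssim\|f\|_\infty$ for the small-$j$ shells (so that the Ces\`aro average is well-defined across the whole range $j\lesssim\log(1/\varepsilon)$, including the $O(1)$-many shells that may poke outside $E$) deserves an explicit sentence. Neither affects the correctness of the argument.
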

	\begin{proof}
		We will prove it by contradiction, i.e. by assuming $\gamma^{1/2}_{\Theta}(E)>0$. Begin by noticing that, under this last hypothesis, we would be able to find a distribution $\nu$ supported on $E$ such that
		\begin{equation*}
			\|(-\Delta)^{1/2}W\ast \nu\|_{\infty} \leq 1, \hspace{0.5cm} \|\partial_t^{1/2}W\ast \nu\|_{\ast,p} \leq 1 \hspace{0.5cm} \text{and} \hspace{0.5cm} |\langle \nu, 1 \rangle|>0.
		\end{equation*}
		By Theorem \ref{thm4.1.1} the distribution $\nu$ has upper $1$-parabolic growth of degree $n+1=2$. Therefore, since $0<\pazocal{H}_{p_1}^{2}(E)<\infty$, by \cite[Lemma 6.2]{MPrT}, we deduce that $\nu$ is a signed measure absolutely continuous with respect to $\pazocal{H}_{p_1}^{2}|_{E}$ and there exists a Borel function $f:E \to \mathbb{R}$ such that $\nu=f\, \pazocal{H}_{p_1}^{2}|_{E}$ with $\|f\|_{L^\infty(\pazocal{H}_{p_1}^{2}|_{E})}\lesssim 1$. In addition, we will also assume, without loss of generality, that $\nu(E)>0$.
        
		We shall contradict the estimate presented in Lemma \ref{lem4.1.3} by finding a point $\ox_0\in E$ such that $\pazocal{T}^\ast \nu (\ox_0)$ is arbitrarily big. To this end, firstly, we properly choose such point by a similar argument to that of the proof of \cite[Theorem 6.3]{MPrT}. Pick $\ox_0=(0,t_0)\in E$ with $0<t_0<1$ a Lebesgue point for the density $f=\dd\nu/\text{d}\pazocal{H}_{p_1}^{2}|_{E}$ satisfying $f(\ox_0)>0$, that can be done since $\nu(E)>0$. Hence, for $\varepsilon>0$ small enough, which will be fixed later on, there is an integer $k>0$ big enough so that if $B_{k,0}$ is the $1$-parabolic ball centered at $\ox_0$ with radius $r(B_{k,0})=2^{-k}$,
		\begin{equation}
			\label{eq2.5.5}
			\frac{1}{\pazocal{H}_{p_1}^{2}(B_{k,0}\cap E)}\int_{B_{k,0}\cap E}|f(\oy)-f(\ox_0)|\text{d}\pazocal{H}_{p_1}^{2}(\oy)\leq \varepsilon.
		\end{equation}
		In addition, we may assume, without loss of generality, that $k$ is big enough so that we have the inclusion $B_{k,0}\cap (\{0\}\times \mathbb{R})\subset E$. This way, the above estimate can be simply reformulated as
		\begin{equation*}
			2^{2k}\int_{t_0-2^{-2k}}^{t_0+2^{-2k}}|f(0,t)-f(0,t_0)|\dd t\leq \varepsilon.
		\end{equation*}
		In any case, let us still work with \eqref{eq2.5.5} and the assumption $B_{k,0}\cap (\{0\}\times \mathbb{R})\subset E$. Begin by fixing the value of $\varepsilon$ so that $f(\ox_0)>\varepsilon$. Proceeding as in the proof of Theorem \ref{thm3.3.5}, choosing
        \begin{equation*}
			\varepsilon<2^{-2m-1}f(\ox_0),
		\end{equation*}
        by analogous arguments to those presented in \eqref{eq3.3.10} and \eqref{eq3.3.11} (interchanging the role of $\mu$ in this case by $\pazocal{H}_{p_1}^{2}|_{B_h}$), we get that for any given $m\gg 1$, if $\varepsilon<2^{-2m-1}f(\ox_0)$,
		\begin{equation}
			\label{eq2.5.6}
			\frac{1}{2}f(\ox_0)\pazocal{H}_{p_1}^{2}(B_h\cap E)\leq \nu(B_h) \leq \frac{3}{2}f(\ox_0)\pazocal{H}_{p_1}^{2}(B_h\cap E),
		\end{equation}
        which is an analogous bound to \eqref{eq3.3.12}.		Notice also that if we decompose $\nu$ in terms of its positive and negative variations, that is $\nu = \nu^+-\nu^-$, using $f(\ox_0)>0$ we deduce as in \eqref{eq3.3.13}
		\begin{align}
			\nu^-(&B_{k,0}) \leq \varepsilon \pazocal{H}_{p_1}^{2}(B_{k,0}\cap E).
			\label{eq2.5.7}
		\end{align}
		Having fixed $\ox_0$ and the value of $\varepsilon$, we shall proceed with the proof. Observe that since $\nu$ is supported on $E$ we have
		\begin{equation*}
			|\pazocal{T}_\nu \chi_{B_{k,0}\setminus{B_{k+m,0}}}(\ox_0)|=|\pazocal{T}_\nu \chi_{E\setminus{B_{k+m,0}}}(\ox_0)-\pazocal{T}_\nu \chi_{E\setminus{B_{k,0}}}(\ox_0)|\lesssim 2\pazocal{T}_{\ast}\nu (\ox_0).
		\end{equation*}
		Therefore,
		\begin{align*}
			\pazocal{T}_\ast \nu(\ox_0) &\gtrsim \frac{1}{2}|\pazocal{T}_\nu \chi_{B_{k,0}\setminus{B_{k+m,0}}}(\ox_0)|\\
			&\geq \frac{1}{2}|\pazocal{T}_{\nu^+} \chi_{B_{k,0}\setminus{B_{k+m,0}}}(\ox_0)|-\frac{1}{2}|\pazocal{T}_{\nu^-} \chi_{B_{k,0}\setminus{B_{k+m,0}}}(\ox_0)|.
		\end{align*}
		Since $\text{dist}_p(\ox_0, B_{k,0}\setminus{B_{k+m,0}})\geq r(B_{k+m,0})$ we estimate $|\pazocal{T}_{\nu^-} \chi_{B_{k,0}\setminus{B_{k+m,0}}}(\ox_0)|$ as follows
		\begin{align*}
			|\pazocal{T}_{\nu^-} \chi_{B_{k,0}\setminus{B_{k+m,0}}}(\ox_0)|&\leq \int_{B_{k,0}\setminus{B_{k+m,0}}}|(-\Delta)^{1/2}W(\ox_0-\oy)|d\nu^-(\oy)\\
			&\lesssim \int_{B_{k,0}\setminus{B_{k+m,0}}}\frac{d\nu^-(\oy)}{|\ox_0-\oy|_p^{2}}\leq \frac{\nu^-(B_{k,0})}{r(B_{k+m,0})^{2}}\leq \varepsilon\frac{\pazocal{H}_{p_1}^{2}(B_{k,0}\cap E)}{r(B_{k+m,0})^{2}}\leq 2^{2m}\varepsilon,
		\end{align*}
		where we have used \cite[Theorem 2.3]{HeMPr} and relation \eqref{eq2.5.7}. So we are left to study the quantity $|\pazocal{T}_{\nu^+} \chi_{B_{k,0}\setminus{B_{k+m,0}}}(\ox_0)|$. Defining $f:\mathbb{R}^n\to \mathbb{R}$ as $f(z):=e^{-|z|^2/4}$, we have that
		\begin{align}
			\label{eq2.5.2}
			(-\Delta)^{1/2}f(0)\simeq 
			\text{p.v.}\int_{\mathbb{R}^n}\frac{1-e^{-|y|^2}}{|y|^{n+1}}\dd y\simeq \int_0^\infty\frac{1-e^{-r^2}}{r^{2}}\dd r=\sqrt{\pi}.
		\end{align} 
        Relation \eqref{eq2.5.2} and the fact that for each $t>0$ we have
        \begin{align*}
			(-\Delta)^{1/2}W(x,t)&\simeq t^{-n/2}(-\Delta)^{1/2}\Big[ e^{-|\cdot|^2/(4t)} \Big](x)\\ 
            &= t^{-\frac{n+1}{2}}(-\Delta)^{1/2}e^{-|x|^2/(4t)} =: t^{-\frac{n+1}{2}}(-\Delta)^{1/2}f\big( x|t|^{-1/2} \big),
		\end{align*}
        implies that, in $\mathbb{R}^{2}$,
        \begin{equation*}
            (-\Delta)^{1/2}W(0,t)\simeq t^{-1}\chi_{t>0}.
        \end{equation*}
        Hence,
		\begin{align*}
			|\pazocal{T}_{\nu^+} &\chi_{B_{k,0}\setminus{B_{k+m,0}}}(\ox_0)| = \bigg\rvert \int_{B_{k,0}\setminus{B_{k+m,0}}\cap E} (-\Delta)^{1/2}W(0,t_0-t)\dd\nu^+(0,t)\bigg\rvert\\
			&=\bigg\rvert \int_{\big[t_0-\frac{1}{2^{2k}}, \, t_0 + \frac{1}{2^{2k}}\big]\setminus{\big[t_0-\frac{1}{2^{2(k+m)}}, \, t_0 + \frac{1}{2^{2(k+m)}}\big]}} (-\Delta)^{1/2}W(0,t_0-t)\dd\nu^+(0,t)\bigg\rvert\\
			&\simeq \bigg\rvert \int_{\big[t_0-\frac{1}{2^{2k}}, \, t_0 + \frac{1}{2^{2k}}\big]\setminus{\big[t_0-\frac{1}{2^{2(k+m)}}, \, t_0 + \frac{1}{2^{2(k+m)}}\big]}} \frac{1}{|t_0-t|} \chi_{\{t_0-t>0\}} \dd\nu^+(0,t)\bigg\rvert\\
			&=\int_{\big[t_0-\frac{1}{2^{2k}}, \, t_0 - \frac{1}{2^{2(k+m)}}\big]} \frac{1}{t_0-t}\dd\nu^+(0,t)\\
			&=\sum_{h=0}^{2m+1}\int_{\big[t_0-\frac{1}{2^{2(k+m)-h-1}}, \, t_0 - \frac{1}{2^{2(k+m)-h}}\big]} \frac{1}{t_0-t}\dd\nu^+(0,t)\\
			&\geq \sum_{h=0}^{2m+1} 2^{2(k+m)-h}\cdot \nu^{+}\bigg( \bigg[ t_0-\frac{1}{2^{2(k+m)-h-1}}, \, t_0 - \frac{1}{2^{2(k+m)-h}} \bigg] \bigg)\\
			&\geq  \sum_{h=0}^{2m+1} 2^{2(k+m)-h}\cdot \frac{1}{2} f(\ox_0)\pazocal{H}_{p_1}^2\big(B_{k+m-\frac{h}{2}}\cap E\big)\simeq (2m+1)f(\ox_0),
		\end{align*}
		where for the last inequality we have used the left estimate of \eqref{eq2.5.7}. All in all, we get
		\begin{equation*}
			\pazocal{T}_\ast \nu (\ox_0) \gtrsim (2m+1)f(\ox_0)-2^{2m}\varepsilon,
		\end{equation*}
		so choosing $m$ big enough and then $\varepsilon$ small enough, we are able to reach the desired contradiction.
	\end{proof}

    \bigskip

	\vspace{1.5cm}
	{\small
		\begin{tabular}{@{}l}
			\textsc{Joan\ Hernández,} \\ \textsc{Departament de Matem\`{a}tiques, Universitat Aut\`{o}noma de Barcelona,}\\
			\textsc{08193, Bellaterra (Barcelona), Catalonia.}\\
			{\it E-mail address}\,: \href{mailto:joan.hernandez@uab.cat}{\tt{joan.hernandez@uab.cat}}
		\end{tabular}
	}

\begin{thebibliography}{CMM+2}
		
		\bibitem[BG]{BG} Blumenthal, R.M., Getoor, R. K.: Some theorems on stable processes. Transactions of the American Mathematical Society \textbf{95}(2), 263--273 (1960).

        \bibitem[C]{C} Christ, M.: Lectures on Singular Integral Operators. CBMS Regional Conference Series in Mathematics, Number 77. American Mathematical Society, Providence, Rhode Island (1990).

        \bibitem[CoW]{CoW} Coifman, R. R.,  Weiss, G.: Extensions of Hardy spaces and their use in analysis. Bulletin of the American Mathematical Society \textbf{83}(4), 569--645 (1977).
		
		\bibitem[DPV]{DPV} Di Nezza, E., Palatucci, G., Valdinoci, E.: Hitchhiker's guide to the fractional Sobolev spaces. Bulletin of Mathematical Sciences \textbf{136}(5), 521--573 (2012).

        \bibitem[DaØ]{DaØ} Davie, A. M., {\O}ksendal, B.:  Analytic capacity and differentiability properties of finely harmonic functions. Acta Mathematica \textbf{149}(1--2), 127--152 (1982).
		
		\bibitem[F]{F} Falconer, K. Fractal geometry. Mathematical foundations and applications. John Wiley \& Sons Ltd., West Sussex (2003).
        		
		\bibitem[Ga]{Ga} Garnett, J.: Bounded analytic functions. Springer New York, New York (2007).
		
		\bibitem[Gr]{Gr} Grafakos, L.: Classical Fourier Analysis, Second Edition, Graduate Texts in Mathematics, n. 249. Springer New York, New York (2008).

        \bibitem[HPo]{HPo} Harvey, R., Polking, J.: Removable singularities of solutions of linear partial differential equations. Acta Mathematica \textbf{125}, 39--56 (1970).

        \bibitem[HeMPr]{HeMPr} Hernández, J., Mateu, J., Prat, L.: On fractional parabolic BMO and \mathinhead{\text{Lip}_{\alpha}}{} caloric capacities. Math. Z. \textbf{312}(21) (2025).
		
		\bibitem[Ho]{Ho} Hofmann, S.: A characterization of commutators of parabolic singular integrals. In: Garc\'ia-Cuerva, J. (ed.) Fourier Analysis and Partial Differential Equations, pp. 195--210. CRC Press, Boca Raton (1995).
		
		\bibitem[HoL]{HoL} Hofmann, S., Lewis, J. L.: $L^2$ solvability and representation by caloric layer potentials in time-varying domains. Annals of Mathematics \textbf{144}(2), 349--420 (1996).
		
		\bibitem[HyMa]{HyMa} Hytönen, T., Martikainen, H.: Non-homogeneous $Tb$ theorem and random dyadic cubes on metric measure spaces. Journal of Geometric Analysis \textbf{22}(4), 1071--1107 (2012).
		
		\bibitem[MPr]{MPr} Mateu, J., Prat, L.: Removable singularities for solutions of the fractional heat equation in time varying domains. Potential Analysis \textbf{60}, 833--873 (2024).
		
		\bibitem[MPrT]{MPrT} Mateu, J., Prat, L., Tolsa, X.: Removable singularities for Lipschitz caloric functions in time varying domains. Revista Matemática Iberoamericana \textbf{38} (2), 547--588 (2022).

        \bibitem[MatPa]{MatPa} Mattila, P., Paramonov P.V.: On geometric properties of harmonic $\text{Lip}_1$-capacity. Pacific Journal of Mathematics \textbf{2}, 469--491 (1995).

        \bibitem[MuWh]{MuWh} Muckenhoupt, B., Wheeden, R.: Weighted bounded mean oscillation and the Hilbert transform. Studia Mathematica \textbf{54}(3), 221--237 (1976).

        \bibitem[NTrVo]{NTrVo} Nazarov, F., Treil, S., Volberg, A.: Weak type estimates and Cotlar inequalities for Calderón-Zygmund operators on nonhomogeneous spaces. International Mathematics Reasearch Notices \textbf{9}, 463--487 (1998).

        \bibitem[NyS]{NyS} Nyström, K., Strömqvist, M.: On the parabolic Lipschitz approximation of parabolic uniform rectifiable sets. Revista Matemática Iberoamericana \textbf{33}(4), 1397--1422 (2017).

        \bibitem[Pe]{Pe} Peetre, J.: On convolution operators leaving $L^{p,\lambda}$ spaces invariant. Annali di Matematica \textbf{72}, 295--304 (1966).

        \bibitem[RSe]{RSe} Ros-Oton, X., Serra, J.: The Dirichlet problem for the fractional Laplacian: Regularity up to the boundary. Journal de Mathématiques Pures et Appliquées \textbf{101}(3), 275--302 (2014).
        
		\bibitem[Ru]{Ru} Rudin, W.: Real and Complex Analysis. McGraw-Hill Inc., Singapore (1987).
		
		\bibitem[St]{St} Stein, E. M.: Singular integrals and differentiability properties of functions. Princeton University Press, Princeton (1970).
		
		\bibitem[T]{T} Tolsa, X.: Analytic Capacity, the Cauchy Transform, and Non-homogeneous Calderón-Zygmund Theory. Birkhäuser Cham, Switzerland (2014).

        \bibitem[U]{U} Uy, N. X.: Removable sets of analytic functions satisfying a Lipschitz condition. Arkiv för Matematik \textbf{17}(1--2), 19--27 (1979).
		
		\bibitem[Va]{Va} Vázquez, J. L.: Asymptotic behavior for the fractional heat equation in the Euclidean space. Complex Variables and Elliptic Equations \textbf{63}(7--8), 1216--1231 (2018).
		
	\end{thebibliography}
\end{document}